\documentclass[11pt,a4paper]{amsart}

\usepackage{amsmath,amssymb}
\usepackage{amsthm} 
\usepackage{amsfonts}
\usepackage[utf8]{inputenc} 
\usepackage{a4wide}
\usepackage{graphicx}
\usepackage{enumitem}
\usepackage{float}
\usepackage{tikz}
\usetikzlibrary{positioning, decorations.pathmorphing}
\usepackage{adjustbox}
\usepackage{color}
\usepackage{mathrsfs}
\usepackage{textcomp}

\usepackage[style=numeric, 
  sorting=nyt,
  giveninits=true,
  defernumbers=true,
  backend=biber]{biblatex}

\DeclareFieldFormat{titlecase}{#1}

\usetikzlibrary{positioning, decorations.pathmorphing}

\theoremstyle{plain}
\newtheorem{theorem}{Theorem}[section]
\newtheorem{proposition}[theorem]{Proposition}
\newtheorem{lemma}[theorem]{Lemma}
\newtheorem{corollary}[theorem]{Corollary}

\theoremstyle{definition}
\newtheorem{definition}[theorem]{Definition}
\newtheorem{example}[theorem]{Example}

\theoremstyle{remark}
\newtheorem{remark}[theorem]{Remark}

\numberwithin{figure}{section}
\numberwithin{theorem}{section}
\numberwithin{equation}{section}

\begin{document}

\date{\today}

\title[Boolean lattices formed by partitions]{On Boolean sublattices of a partition lattice}

\author{Stephan Foldes}
\address[S.~Foldes]{Institute of Informatics, University of Miskolc, 3515 Miskolc-Egyetemv\'{a}ros, Hungary}
\email{foldes.stephan@uni-miskolc.hu}

\author{S\'{a}ndor Radeleczki}
\address[S.~Radeleczki]{Institute of Mathematics, University of Miskolc, 3515 Miskolc-Egyetemv\'{a}ros, Hungary}
\email{sandor.radeleczki@uni-miskolc.hu}

\keywords{Partition lattice, Boolean sublattice, Interval decomposition, Independent set in a lattice, Linear hypergraph, Hypertree}

\subjclass{Primary: 06C10, 05A18, Secondary: 05C65, 06E99}

\begin{abstract}
We investigate maximal Boolean sublattices of the
partition lattice Part$(U)$ of a finite universe $U$. First, any
largest size Boolean sublattice of Part$(U)$ can be formed using all
partitions whose blocks are subtrees of a tree with vertex set $U$.
It is shown that a maximal Boolean sublattice of Part$(U)$ always
contains the least and the largest elements of Part$(U)$,
$\triangle$ and $\triangledown$. Boolean sublattices $\mathcal{B}$
of Part$(U)$ containing $\triangle$ are characterized by a certain
condition imposed on the cycles of a linear hypergraph corresponding
to the atoms of $\mathcal{B}$ on the set $U$. We show that
$\mathcal{B}$ can be extended to a Boolean sublattice of Part$(U)$
with a largest size, if and only if the hypergraph induced by its
atoms is a hypertree. This is the case when $\mathcal{B}$ is formed
by all the partitions whose blocks are intervals in a generalized
sense.

The main result states that all partition lattices of height at least three have maximal Boolean sublattices for all possible dimensions $d\geq3$.
\end{abstract}

\maketitle

\section{Motivation}\label{sec1}

\medskip

In this paper we study some special Boolean sublattices of a finite partition
lattice. The motivation for this article comes from the study of the so-called
interval decompositions. Decompositions into intervals were first studied by
Hausdorff \cite{Hausdorff1908}, in the context of linearly ordered sets, then the interval
concept was extended to partially ordered sets and graphs (Sabidussi \cite{Sabidussi1961}),
and appeared in particular in the study of comparability graphs (Gallai \cite{Gallai1967}). A
general, abstract theory of interval decompositions was first presented by
M\"{o}hring and Radermacher \cite{Mohring1985,MohringRadermacher1984}. 
\medskip

A closure system $(U,\mathcal{Q})$, $\mathcal{Q}\subseteq\wp(U)$ is called
\emph{algebraic} if the union of any chain of closed sets is closed. An
\emph{interval system }$(U,\mathcal{I})$ was defined in \cite{FoldesRadeleczki2004} as an algebraic
closure system with the following properties:
\smallskip

(I$_{\text{0}}$) $\{x\}\in\mathcal{I}$, for all $x\in U$ and $\emptyset\in$
$\mathcal{I}$,

(I$_{\text{1}}$) $A,B\in \mathcal{I}$ and $A\cap B\neq$ $\emptyset$ imply
$A\cup B\in$ $\mathcal{I}$,

(I$_{\text{2}}$) for any $A,B\in \mathcal{I}$ the relations $A\nsubseteq B$
and $B\nsubseteq A$ imply $A\setminus B\in\mathcal{I}$

(and $B\setminus A\in\mathcal{I}$).

\smallskip
\noindent A \emph{decomposition} in a closure system $(U,\mathcal{Q})$ is a
partition $\pi=\{A_{i}$ $\mid i\in I\}$ of the set $U$ such that $A_{i}%
\in\mathcal{Q}$, for all $i\in I$. If $(U,\mathcal{Q})$ is an interval system,
then $\pi$ is called an \emph{interval decomposition}. The set of all
decompositions in $(U,\mathcal{Q})$ is denoted by $\mathcal{D}(U,\mathcal{Q})$.

Let Part$(U)$ denote the lattice of all partitions of $U$ and $\wp(U)$ its
power set. The least element of Part$(U)$ is the partition $\triangle:=$
$\{\{x\}\mid x\in U\}$, and its greatest element is the partition $\nabla$
having as a single block the whole set $U$. Since $\mathcal{D}(U,\mathcal{Q})$
is included in$\ $Part$(U)$, it is ordered by refinement, i.e. for any
$\pi_{1},\pi_{2}\in\mathcal{D}(U,\mathcal{Q})$, $\pi_{1}\leq\pi_{2}$ holds if
and only if every block of $\pi_{2}$ is the union of some blocks $\pi_{1}$.
Then $\mathcal{D}(U,\mathcal{Q})$ is a complete lattice with the greatest
element $\nabla$, where the $\wedge$ operation is the same as in Part$(U)$
(see e.g. \cite{FoldesRadeleczki2004}).

\begin{remark}\label{rem. 1.1} Let $(U,\mathcal{Q})$ be a closure system that satisfies
condition (I$_{\text{0}}$). Then $\triangle$ is the least element of
$\mathcal{D}(U,\mathcal{Q)}$, and to any $A\in\mathcal{Q}\setminus
\{\emptyset\}$ corresponds the decomposition
$\pi_{A}=\{A\}\cup\{\{x\}\mid
x\in U\setminus A\}$. If $\pi=\{A_{i}$ $\mid i\in I\}\in\mathcal{D}%
(U,\mathcal{Q})$, then%
\begin{equation}
\pi=\bigvee\{\pi_{A_{i}}\mid i\in I\}\text{,} \tag{1}%
\end{equation}

\noindent Note that in (1) the join $\bigvee\{\pi_{A_{i}}\mid i\in
I\}$ is the same in the lattices $\mathcal{D}(U,\mathcal{Q})$ and
Part$(U)$. Partitions of the form $\pi_{A}$, $A \subseteq U \setminus
\{\emptyset\}$ are called \emph{principal
partitions}. In this case the atoms in the lattice
$\mathcal{D}(U,\mathcal{Q})$ are principal partitions. The atoms
in Part$(U)$ are exactly the partitions $\pi_{\{a,b\}}$, with
$a,b\in U$, $a\neq b$.
\end{remark}

\noindent In \cite{FoldesRadeleczki2004} it was proved the following

\medskip

\begin{proposition}\label{prop. 1.2} Let $(U,\mathcal{Q})$ be an algebraic
closure system satisfying condition (I$_{\text{0}}$). Then $\mathcal{D}%
(U,\mathcal{Q})$ is a complete semimodular sublattice of Part$(U)$
if and only if $(U,\mathcal{Q})$ satisfies condition
(I$_{\text{1}}$).
\end{proposition}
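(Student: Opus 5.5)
The statement has two halves: the sublattice property (both implications), and semimodularity. I will assume throughout that $(U,\mathcal{Q})$ is an algebraic closure system satisfying (I$_{\text{0}}$), so that $\mathcal{D}(U,\mathcal{Q})$ is a complete lattice with greatest element $\nabla$. Its meets agree with those of Part$(U)$: the blocks of $\bigwedge\pi_i$ are nonempty intersections of blocks, and these lie in $\mathcal{Q}$ since $\mathcal{Q}$ is closed under intersections. So "complete sublattice" reduces to showing that arbitrary joins in Part$(U)$ of decompositions are again decompositions.

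\emph{(I$_{\text{1}}$) $\Rightarrow$ sublattice.} Let $\pi_i\in\mathcal{D}(U,\mathcal{Q})$, $i\in I$, and let $C$ be a block of $\sigma=\bigvee_{\text{Part}(U)}\pi_i$. I need $C\in\mathcal{Q}$. Fix $x\in C$. Every $y\in C$ is reached from $x$ by a finite chain $x=z_0,z_1,\dots,z_n=y$ in which consecutive elements lie in a common block $B_k$ of some $\pi_{i_k}$. Consecutive blocks $B_k,B_{k+1}$ share $z_k$, so repeated use of (I$_{\text{1}}$) shows that $B_1\cup\dots\cup B_n\in\mathcal{Q}$, and this set contains $x$ and $y$.

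Now take the family $\mathcal{F}$ of all such finite unions of chained blocks starting at $x$. It is upward directed: two such unions both contain $x$, so by (I$_{\text{1}}$) their union is again of this form. Its union is exactly $C$. Since $\mathcal{Q}$ is algebraic, the union of a directed family of closed sets is closed. (Directed unions reduce to chain unions by the standard cardinality induction; for finite $U$ this is trivial.) Hence $C\in\mathcal{Q}$, and $\sigma\in\mathcal{D}(U,\mathcal{Q})$.

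\emph{Sublattice $\Rightarrow$ (I$_{\text{1}}$).} Let $A,B\in\mathcal{Q}$ with $A\cap B\ne\emptyset$. Both are nonempty, so the principal partitions $\pi_A,\pi_B$ belong to $\mathcal{D}(U,\mathcal{Q})$ by Remark~\ref{rem. 1.1}. Their join in Part$(U)$ has $A\cup B$ as a block, because $A$ and $B$ overlap. By assumption this join lies in $\mathcal{D}(U,\mathcal{Q})$, so $A\cup B\in\mathcal{Q}$.

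\emph{Semimodularity.} The main obstacle is that a complete sublattice of Part$(U)$ need not inherit semimodularity: covers in $\mathcal{D}(U,\mathcal{Q})$ need not be covers in Part$(U)$. I would use the following characterization of covers: $\alpha\prec\beta$ in $\mathcal{D}(U,\mathcal{Q})$ exactly when $\beta$ is obtained from $\alpha$ by merging some blocks $A_1,\dots,A_k$ of $\alpha$ into a single closed set $M$, with no proper subfamily of at least two of the $A_j$ having closed union. The direction "covers have this form" holds because any decomposition strictly between $\alpha$ and $\beta$ must merge a proper subfamily into a closed set.

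Now suppose $\alpha\prec\beta$ and $\alpha\le\gamma$, with $\beta\not\le\gamma$. I must show $\gamma\prec\beta\vee\gamma$ (computed in Part$(U)$, hence in $\mathcal{D}(U,\mathcal{Q})$). The join $\beta\vee\gamma$ merges into one block $N$ the $\gamma$-blocks $C_1,\dots,C_m$ that meet $M$, and $N\in\mathcal{Q}$. Suppose a proper subfamily of the $C_j$, with at least two members, had a closed union $N'$. Then $N'\cap M$ is closed, and it is a union of $A$'s, since $\gamma$-blocks are unions of $\alpha$-blocks. It is a proper part of $M$, and it contains at least two of the $A_j$ when both $C$'s involved meet $M$ in distinct $A$'s. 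It is nonempty because every $C_j$ meets $M$.

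This contradicts the minimality of $M$, except in the degenerate case where two $C$'s share the same $A_j$; that cannot occur, since $\alpha\le\gamma$. The remaining case, where $N'\cap M$ is a single $A_j$, would need more care: there I would use closure under intersection with $M$ to produce a smaller merge. I expect this cover-characterization step to be the delicate part. It is essentially the argument from \cite{FoldesRadeleczki2004}, which I would follow.
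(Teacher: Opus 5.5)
The paper does not prove this proposition; it quotes it from \cite{FoldesRadeleczki2004}. There is therefore no in-text argument to compare with, and your proposal has to stand as a self-contained proof. It does.

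Both sublattice directions are complete as written. Meets come from intersection-closure. Part$(U)$-joins come from chaining blocks with (I$_1$) and then taking a directed union, which algebraicity handles. Conversely, $\pi_A\vee\pi_B$ has $A\cup B$ as a block.

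The semimodularity argument is also complete, and the case you leave open at the end cannot occur. Since $\alpha\le\gamma$, each $A_j$ lies inside a single $\gamma$-block. So every $C_j$, which meets $M$, contains at least one $A_j$ entirely. Distinct $C_j$ are disjoint, so if $N'$ is the union of at least two of the $C_j$, then $N'\cap M$ is the union of at least two of the $A_j$. You already showed that $N'\cap M$ is closed and a proper subset of $M$: an omitted $C_{j_0}$ contains some $A_j\subseteq M$ disjoint from $N'$. Minimality of $M$ is therefore contradicted directly, and no further appeal to \cite{FoldesRadeleczki2004} is needed.

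Two small repairs are worth making.

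First, your one-line justification of the cover characterization ("any decomposition strictly between must merge a proper subfamily into a closed set") actually proves the converse: a single minimal merge is a cover. The direction you attach it to, that every cover has this form, needs a separate and equally short remark. If $\beta$ merged two separate groups, or if some proper subfamily of at least two $A_j$ had closed union, then performing only that merge gives an element of $\mathcal{D}(U,\mathcal{Q})$ strictly between $\alpha$ and $\beta$. You use both directions: the first to exploit $\alpha\prec\beta$, the second to conclude $\gamma\prec\beta\vee\gamma$.

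Second, $U$ may be infinite here, so the merged families can be infinite. For example, $\triangle\prec\nabla$ when the only closed sets are $\emptyset$, the singletons and $U$. Write the families with arbitrary index sets rather than $A_1,\dots,A_k$; nothing in the argument changes.

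Your route also shows that semimodularity needs nothing beyond closure of $\mathcal{D}(U,\mathcal{Q})$ under Part$(U)$-joins and closure of $\mathcal{Q}$ under intersections. Algebraicity enters only to make infinite joins land in $\mathcal{D}(U,\mathcal{Q})$.
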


\noindent Therefore, for an interval system $(U,\mathcal{I})$, the lattice
$\mathcal{D}(U,\mathcal{I})$ is always a complete semimodular sublattice of
Part$(U)$. The following example and proposition were the starting point of
our investigations.

\medskip
\begin{example}\label{ex. 1.3} If $T=(V,E)$ is a finite tree, then the
vertex sets of its subtrees form a closure system $(V,\mathcal{Q})$
which satisfies conditions (I$_{\text{0}}$) and (I$_{\text{1}}$).
Thus, in view of Proposition 1.2, $\mathcal{D}(V,\mathcal{Q})$ is a
finite sublattice of Part$(V)$. Moreover, in \cite{FoldesRadeleczki2016} we proved that
$\mathcal{D}(V,\mathcal{Q})$ is a Boolean lattice isomorphic to
$(\wp(E),\subseteq)$.
\end{example}

\noindent Let $L$ be a lattice with a least element $0$. An element $a\in L$
is an \emph{atom} if it covers $0$, that is, $0\prec a$. $L$ is
\emph{atomistic} if every element of $L$ is a join of atoms. It is well-known
that all the partition lattices and the finite Boolean lattices are atomistic.
The \emph{dimension} $d$ of a Boolean lattice $B$ is the number of its atoms
which is the same as its height; obviously, $\mid B\mid=2^{d}$, where $\mid
B\mid$ denotes the cardinality (size) of the set $B$. Observe that in the
previous example $\mathcal{D}(V,\mathcal{Q})$ is a Boolean lattice of size
$2^{\mid V\mid-1}$.

\medskip
\begin{proposition}\label{prop. 1.4} Let $V$ be an $n$-element set, $n\geq3$.
Then the following holds true:

\noindent(i) The largest size Boolean sublattices of Part$(V)$ are $n-1$ dimensional.

\noindent(ii) The Boolean sublattices of dimension $n-1$ correspond 
bijectively to the trees on the node set $V$, where to a tree $T$ the
members of the corresponding lattice are the partitions of $V$ into
connected node sets (subtrees) of $T$.
\end{proposition}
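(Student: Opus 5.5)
The approach is to compare heights: a Boolean sublattice of dimension $d$ contains a chain of length $d$, while Part$(V)$ has height $n-1$. Throughout I use the rank function $r(\pi)=n-|\pi|$ of Part$(V)$, where $|\pi|$ is the number of blocks. It is strictly increasing along strict refinements, with $r(\triangle)=0$ and $r(\nabla)=n-1$.

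For (i), let $\mathcal{B}$ be a Boolean sublattice of dimension $d$. Then $\mathcal{B}$ contains a chain $\beta_0<\beta_1<\dots<\beta_d$, which is also a chain in Part$(V)$. Since $r$ is strictly increasing along it and takes values in $\{0,\dots,n-1\}$, we get $d\le n-1$. Dimension $n-1$ is attained by Example 1.3: for any tree $T$ on $V$, $\mathcal{D}(V,\mathcal{Q})$ is Boolean of size $2^{n-1}$. Hence the largest Boolean sublattices have dimension exactly $n-1$.

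For (ii), let $\mathcal{B}$ have dimension $n-1$ and let $a$ be any atom of $\mathcal{B}$. Since $\mathcal{B}\cong\wp(\{1,\dots,n-1\})$, there is a maximal chain $0_{\mathcal{B}}=\beta_0<a=\beta_1<\dots<\beta_{n-1}=1_{\mathcal{B}}$ in $\mathcal{B}$. Now $r$ increases by at least $1$ at each of the $n-1$ steps and stays within $[0,n-1]$. This forces $r(\beta_0)=0$, so $0_{\mathcal{B}}=\triangle$, and $r(\beta_{n-1})=n-1$, so $1_{\mathcal{B}}=\nabla$. It also forces $r(a)=1$, so $a$ has $n-1$ blocks.

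This step is the crux of the argument. It shows that every atom of $\mathcal{B}$ is an atom $\pi_{\{x,y\}}$ of Part$(V)$, i.e. corresponds to an edge $xy$. So the $n-1$ atoms of $\mathcal{B}$ determine a graph $T$ on $V$ with $n-1$ edges. The join in Part$(V)$ of atoms $\pi_{\{x,y\}}$ over an edge set $F$ is the partition of $V$ into the connected components of $(V,F)$. Since the join of all atoms of $\mathcal{B}$ is $1_{\mathcal{B}}=\nabla$, the graph $T$ is connected, and a connected graph with $n$ vertices and $n-1$ edges is a tree.

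Next I identify $\mathcal{B}$ with the subtree partitions of $T$. Because $\mathcal{B}$ is a finite Boolean, hence atomistic, sublattice of Part$(V)$, its elements are exactly the joins of subsets $F\subseteq E(T)$ of its atoms. Such a join is the partition into the components of $(V,F)$, and these components are subtrees of $T$. Conversely, every partition of $V$ into vertex sets of subtrees of $T$ arises this way: take $F$ to be the set of tree edges lying inside blocks. Hence $\mathcal{B}$ equals the lattice $\mathcal{D}(V,\mathcal{Q})$ of Example 1.3. For the converse direction, every tree gives such a lattice, which is Boolean of dimension $n-1$ by Example 1.3.

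Finally, the correspondence is bijective. The atoms of $\mathcal{D}(V,\mathcal{Q})$ are exactly the partitions $\pi_{\{x,y\}}$ for edges $xy$ of $T$, so the lattice recovers $T$. Thus distinct trees give distinct lattices, and the two constructions (tree to lattice, lattice to tree via atoms) are mutually inverse.
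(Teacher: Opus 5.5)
Your proof is correct and follows the same route as the paper: the height bound, then showing that the atoms of $\mathcal{B}$ must be atoms $\pi_{\{x,y\}}$ of Part$(V)$, then that the resulting graph with $n-1$ edges is a tree, and finally that $\mathcal{B}$ coincides with the subtree-partition lattice. The only differences are these: you derive the tree property from connectivity (via $1_{\mathcal{B}}=\nabla$, which your rank count forces), whereas the paper derives it from acyclicity using independence of the atoms; and you make explicit both $0_{\mathcal{B}}=\triangle$ and the injectivity of the tree-to-lattice correspondence, which the paper uses only tacitly.
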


\begin{proof} (i) Because Part$(V)$ has height $n-1$, the 
Boolean sublattices of the largest size of Part $(V)$ have height, i.e., a
dimension at most $n-1$. On the other hand, for any tree $T$ on the node
set $V$ the corresponding lattice of partitions into connected
node sets forms a Boolean lattice of dimension $n-1$.

\noindent(ii) Take any $n-1$ dimensional Boolean sublattice $B$ of Part$(V)$.
As the height of $B$ equals to the height of Part$(V)$, the atoms of $B$ are
necessarily atoms of Part$(V)$, i.e. they have the form $\pi_{\{a,b\}}$, where
$a,b\in U$, $a\neq b$.

Since to each atom of $B$ corresponds a proper pair $A=\{a,b\}$ of
elements of $V$, the pairs corresponding to the atoms of $B$ define
a graph $G=(V,E)$ on the vertex set $V$. Clearly, $G$ has neither
loops nor multiple edges. We prove that $G$ cannot have a cycle.
Indeed, suppose that $A_{0}=\{a,b\}$ is an edge in such a cycle
$\mathcal{C}$. Since the vertices $a$ and $b$ also belong to the
neighboring edges of $A_{0}$ in $\mathcal{C}$, $A_{0}$ is included
in the union of some other edges $A_{1},...,A_{k}$ of $\mathcal{C}$.
This implies $\pi_{A_{0}}\leq\pi_{A_{1}}\vee...\vee\pi_{A_{k}}$,
where all these partitions are atoms of $B$. As $B$ is Boolean, its
atoms form an independent set, hence we get $\pi_{A_{0}}\wedge\left(
\pi_{A_{1}}\vee...\vee\pi_{A_{k}}\right) =\triangle$ - a
contradiction. Since $B$ is $n-1$ dimensional, $G$ has $n-1$ edges
corresponding to the $n-1$ atoms of $B$. Therefore, $G$ is a tree
$T$. Clearly, all partitions of $V$ into vertex sets of subtrees of
$T$, must be in $B$, because they are the joins of some atoms
$\pi_{A_{i}}$ of $B$, where $A_{i}$ is an edge in $E$. As these
partitions form an $n-1$ dimensional Boolean lattice, that Boolean
lattice must coincide with $B$.
\end{proof}

\noindent Closure systems $(V,\mathcal{Q})$ satisfying conditions
(I$_{\text{0}}$) and (I$_{\text{1}}$) are called \emph{semi-interval}
\emph{systems}. Hence, as a consequence of Proposition~\ref{prop. 1.4}. we obtain:
\medskip

\begin{corollary}\label{cor. 1.5} The largest size Boolean sublattices of
Part$(V)$ are lattices $\mathcal{D}(V,\mathcal{I})$ of semi-interval
decompositions.
\end{corollary}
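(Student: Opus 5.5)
Corollary~\ref{cor. 1.5} follows directly from Proposition~\ref{prop. 1.4} together with Example~\ref{ex. 1.3}.

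First, take a largest size Boolean sublattice $B$ of Part$(V)$. By Proposition~\ref{prop. 1.4}(i) it has dimension $n-1$. By (ii) there is a tree $T$ on the node set $V$ such that $B$ consists exactly of the partitions of $V$ into vertex sets of subtrees of $T$.

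Next, let $\mathcal{I}$ be the family of vertex sets of subtrees of $T$, together with $\emptyset$. We check that $(V,\mathcal{I})$ is a closure system satisfying (I$_0$) and (I$_1$), so that it is a semi-interval system.
\begin{itemize}
\item \emph{Closure system.} Every singleton is a subtree, and $V$ itself is a subtree. The intersection of subtrees of a tree is again a subtree or empty, since the path between two vertices is unique.
\item \emph{(I$_0$).} Singletons and $\emptyset$ belong to $\mathcal{I}$ by construction.
\item \emph{(I$_1$).} The union of two intersecting connected sets is connected.
\end{itemize}
This is exactly the content recorded in Example~\ref{ex. 1.3}, so I would simply cite it. (In the finite case there is no separate algebraicity issue.)

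Then, by definition, $\mathcal{D}(V,\mathcal{I})$ is the set of partitions of $V$ all of whose blocks lie in $\mathcal{I}$, i.e.\ all partitions into subtrees of $T$. This is precisely $B$. Hence $B=\mathcal{D}(V,\mathcal{I})$ is a lattice of semi-interval decompositions.

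There is no real obstacle. The only points needing care are that the empty set and singletons are handled consistently with (I$_0$), and that the identification of $B$ with $\mathcal{D}(V,\mathcal{I})$ is an equality of sets of partitions with the same order. The latter holds because both are subposets of Part$(V)$ under refinement.
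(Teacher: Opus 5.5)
Your proposal is correct and takes the same route as the paper. Proposition~\ref{prop. 1.4}(ii) identifies any largest Boolean sublattice with the partitions of $V$ into subtrees of a tree $T$, and Example~\ref{ex. 1.3} shows that these subtree vertex sets, together with $\emptyset$, form a semi-interval system whose decomposition lattice is exactly that set (like the paper, you inherit the hypothesis $n\geq 3$ of Proposition~\ref{prop. 1.4}, and the cases $n\leq 2$ are trivial).
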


\noindent In what follows, extending this result, we show that $\mathcal{D}%
(V,\mathcal{I})$ is a Boolean sublattice of Part$(U)$ if and only if
the atoms of the lattice $\mathcal{D}(V,\mathcal{I)}$ define a
hypertree on the set $U$.

\medskip

\section{Boolean sublattices and independent sets in Part(U)}\label{sec2}

In this section, we study arbitrary Boolean sublattices of Part$(U)$ and
the proper extensions of the Boolean sublattices of an arbitrary finite
lattice $L$. First, we will prove that for the case of a finite set $U$, any
Boolean sublattice of Part$(U)$ is included in a Boolean sublattice of it
containing $\triangle$ and $\triangledown$.

\noindent Let $H$ be a nonempty subset of a lattice $L$. A finite meet of
elements in $H$ is called a $\wedge$\emph{-element}. The set of all $\wedge
$-elements of $H$ is denoted by $H_{\wedge}$. The set $H_{\vee}$ of $\vee
$\emph{-elements} of $H$ is defined dually.

\medskip

Let $L$ be a lattice with a least element $0$. A nonempty subset $Z\subseteq
L\setminus\{0\}$ is called \emph{independent} if the relation
\begin{equation}
\left(  \bigvee X\right)  \wedge\left(  \bigvee Y\right)  =\bigvee(X\cap Y)
\tag{2}%
\end{equation}

\noindent holds for all finite subsets $X,Y$ of $Z$. In view of Gr\"{a}tzer
\cite{Gratzer2011}, if $Z$ is an independent set of the lattice $L$, then the mapping
$\varphi\colon X\rightarrow\bigvee X$, $X\subseteq Z$ is a lattice isomorphism
between the lattice $\wp_{f}(Z)$ of finite subsets of $Z$, and $\langle
Z\rangle$, the sublattice of $L$ generated by $Z$. Obviously, in this case
$\langle Z\rangle=Z_{\vee}$ and $\langle Z\rangle$ is a Boolean lattice,
whenever $Z$ is finite.

We say that an ordered pair $(a,b)\in L^{2}$ is a \emph{modular pair} and we
write $(a,b)\in M$ if, for all $c\in L$,%
\[
c\leq b\text{ implies }c\vee(a\wedge b)=(c\vee a)\wedge b\text{.}%
\]

\noindent We say that $(a,b)$ is a \emph{dual modular pair}, and we write
$(a,b)\in M^{\ast}$ if, for all $c\in L$,%
\[
c\geq b\text{ implies }c\wedge(a\vee b)=(c\wedge a)\vee b\text{.}%
\]

\noindent An element $a$ of a lattice $L$ is called \emph{left modular}, if
$(a,x)\in M$ holds for all $x\in L$; $a\in L$ is \emph{right modular}, if
$(x,a)\in M$ holds for all $x\in L$. We say that $a$ is a \emph{modular
element} of $L$ if it is a right modular and left modular element of $L$ in
the same time. For instance, any atom of a lattice $L$ is a right modular
element in $L$, and any principal partition (and hence any atom) is a modular
element of the lattice Part$(U)$. If $d$ is a dual atom in $L$, then $(x,d)\in
M^{\ast}$, for all $x\in L$ (see e.g. Stanley \cite{Stanley1972} or Stern \cite{Stern1999}).

\medskip

\begin{lemma}\label{lemma 2.1} Let $L$ be a finite lattice with a least element
$0$, $Z\subseteq L$ an independent subset, and $m:=\bigvee Z$. If an
element $t\in L\setminus\{0\}$ satisfies the relations $t\wedge m=0$
and $(t,m)\in M$, then $Z\cup\{t\}$ is also independent.
\end{lemma}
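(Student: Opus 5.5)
We have to verify relation (2) for all finite $X,Y\subseteq Z\cup\{t\}$. We may assume $t\notin Z$: otherwise $t\le m$ and $t=t\wedge m=0$, which is impossible. If neither $X$ nor $Y$ contains $t$, then (2) holds by the independence of $Z$. The inequality $\bigvee(X\cap Y)\le(\bigvee X)\wedge(\bigvee Y)$ is always true, so in every case only the reverse inequality needs proof. Write $X=X_0\cup X_1$ and $Y=Y_0\cup Y_1$, where $X_0,Y_0\subseteq Z$ and $X_1,Y_1\subseteq\{t\}$. By symmetry there are two cases: (a) $t\in X$ and $t\notin Y$; (b) $t\in X\cap Y$.

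The key tool is the modular pair $(t,m)$. For any $c\le m$ it gives $c\vee(t\wedge m)=(c\vee t)\wedge m$, that is,
\[(c\vee t)\wedge m=c .\]
In case (a), put $c=\bigvee X_0\le m$ and $y=\bigvee Y_0\le m$. Then
\[(c\vee t)\wedge y=\bigl((c\vee t)\wedge m\bigr)\wedge y=c\wedge y .\]
By independence of $Z$, $c\wedge y=\bigvee(X_0\cap Y_0)=\bigvee(X\cap Y)$, which settles (a).

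Case (b) is the main obstacle, since we must show
\[(c\vee t)\wedge(y\vee t)\le \bigvee(X_0\cap Y_0)\vee t .\]
Set $u=(c\vee t)\wedge(y\vee t)$. First I would compute $u\wedge m$. Since $c\vee t\ge u$, we have $u\wedge m=u\wedge(c\vee t)\wedge m=u\wedge c$, and likewise $u\wedge m=u\wedge y$. Hence
\[u\wedge m\le c\wedge y=\bigvee(X_0\cap Y_0)=:w .\]
The difficulty is passing from information about $u\wedge m$ back to $u$ itself, which is an exchange-type step. To do this I would apply the modular pair again, this time with the element $w\vee t$. Taking $c'=w\le m$ gives $(w\vee t)\wedge m=w$. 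Then
\[(c\vee t)\wedge m=c,\qquad (y\vee t)\wedge m=y,\qquad u\wedge m=c\wedge y=w,\]
where the last equality uses $w\le u$ and $w\le m$. So $u$ and $w\vee t$ lie above $t$ and have the same meet $w$ with $m$, and also $w\vee t\le u$.

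To conclude $u=w\vee t$ I need a cancellation property: if $t\le v\le u$ and $v\wedge m=u\wedge m$, then $v=u$. I would try to derive this from $(t,m)\in M$ together with finiteness, for instance by considering a cover $v\prec v'\le u$ and using the modular pair to show that $v'\wedge m>v\wedge m$. If this cancellation does not follow from $(t,m)\in M$ alone, then I would use the dual formulation, i.e. that $(m,t)$-type dual modularity holds in the intended application, where $t$ is a principal partition and hence a modular element of Part$(U)$. The case analysis and the reduction via $(c\vee t)\wedge m=c$ are routine; this cancellation step is where the real work lies.
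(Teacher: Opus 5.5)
Your reduction to the two cases and your treatment of case (a) are correct, and they match the paper's handling of that case. The gap is case (b), and it cannot be closed. The cancellation property you want does not follow from $(t,m)\in M$; in fact the lemma as stated is false.

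Take the nine-element lattice $L=\{0,z_1,z_2,t,s,m,a,b,1\}$ with covering relations $0\prec z_1,z_2,t$; $t\prec s$; $z_1,z_2\prec m$; $z_1,s\prec a$; $z_2,s\prec b$; $m,a,b\prec 1$. The hypotheses hold:
\begin{itemize}
\item $z_1\wedge z_2=0$ and $z_1\vee z_2=m$, so $Z=\{z_1,z_2\}$ is independent;
\item $t\wedge m=0$;
\item $(t,m)\in M$: the elements below $m$ are $0,z_1,z_2,m$, and $(z_1\vee t)\wedge m=a\wedge m=z_1$, $(z_2\vee t)\wedge m=b\wedge m=z_2$, $(m\vee t)\wedge m=m$.
\end{itemize}
But $(z_1\vee t)\wedge(z_2\vee t)=a\wedge b=s\neq t$, so $Z\cup\{t\}$ is not independent. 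In your notation, $u=s$ and $w\vee t=t$ lie above $t$ and have the same meet $0$ with $m$, yet differ. Assuming $t$ is a modular element of $L$ would not help either, since this $t$ is one: atoms are always right modular, and $(t,x)\in M$ is checked directly for all nine $x$.

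The paper's own proof has the same hole. It asserts that the order isomorphism $\varphi_t\colon x\mapsto x\vee t$ of $Z_\vee$ onto its image is a lattice isomorphism. It then uses $\varphi_t(x_1)\wedge\varphi_t(x_2)=\varphi_t(x_1\wedge x_2)$ with the meet of $L$. An order isomorphism only controls meets computed inside the image, and here $a\wedge b=s$ lies outside it. So you located the real difficulty correctly.

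Your fallback does not work as stated either. What would close your argument is the dual condition $(m,t)\in M^{\ast}$, i.e.\ $v=(v\wedge m)\vee t$ whenever $t\le v\le m\vee t$. Then $x\mapsto x\vee t$ is an isomorphism of the intervals $[0,m]$ and $[t,m\vee t]$, with inverse $v\mapsto v\wedge m$, and case (b) follows because intervals are sublattices. However, this is not implied by $t$ being a principal partition. In Part$(\{p,q,r,s\})$ take $t=\pi_{\{p,q\}}$, $m=\{\{p,r\},\{q,s\}\}$ and $v=\{\{p,q\},\{r,s\}\}$. Then $t\le v\le m\vee t=\triangledown$, but $(v\wedge m)\vee t=\triangle\vee t=t\neq v$.

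In Part$(U)$ with $t=\pi_D$ you can instead prove case (b) directly. The condition $t\wedge m=\triangle$ means $D$ meets each block of $m$ in at most one point. For $c,y\le m$, let $C_d$ and $Y_d$ denote the blocks of $c$ and $y$ containing $d$. The block $\bigcup_{d\in D}C_d$ of $c\vee t$ meets the block $\bigcup_{d\in D}Y_d$ of $y\vee t$ exactly in $\bigcup_{d\in D}(C_d\cap Y_d)$. It meets any block of $y$ disjoint from $D$ in at most one block of $c\wedge y$, and symmetrically for blocks of $c$ disjoint from $D$. Hence the blocks of $(c\vee t)\wedge(y\vee t)$ are $\bigcup_{d\in D}(C_d\cap Y_d)$ together with the blocks of $c\wedge y$ disjoint from $D$. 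These are precisely the blocks of $(c\wedge y)\vee t$. Some such extra input, beyond $(t,m)\in M$ and $t\wedge m=0$, is indispensable.
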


\begin{proof} Let us consider the mapping $\varphi_{t}(x)=x\vee t$,
$x\in Z_{\vee}$. Then $x\in Z_{\vee}$ implies $x\leq m$. Observe that
$\varphi_{t}$ is an order-isomorphism between the posets $(Z_{\vee},\leq)$ and
$(\varphi_{t}\left(  Z_{\vee}\right)  ,\leq)$. Indeed, $\varphi_{t}$ is
order-preserving. Conversely, $\varphi_{t}(x_{1})\leq\varphi_{t}(x_{2})$
implies $\left(  x_{1}\vee t\right)  \wedge m\leq\left(  x_{2}\vee t\right)
\wedge m$. Since $(t,m)\in M$ and $x_{1},x_{2}\leq m$, we have $\left(
x_{i}\vee t\right)  \wedge m=x_{i}\vee(t\wedge m)=x_{i\text{ }}$,
$i\in\{1,2\}$, therefore, we get $x_{1}\leq x_{2}$.

As $\varphi_{t}\colon Z_{\vee}\rightarrow\varphi_{t}\left(  Z_{\vee}\right)  $
is onto, it is an order-isomorphism. Since $Z_{\vee}=\langle Z\rangle$,
$(Z_{\vee},\leq)$ and $(\varphi_{t}\left(  Z_{\vee}\right)  ,\leq)$ are
lattices and $\varphi_{t}\colon Z_{\vee}\rightarrow\varphi_{t}\left(  Z_{\vee
}\right)  $ is a lattice isomorphism. Now, we are going to prove that relation
(1) holds for any finite subset $X,Y\subseteq Z\cup\{t\}$. If $X,Y\subseteq
Z$, this is clear, because $Z$ is independent. If $t\in X\cap Y$, then $X,Y$
can be written in the form $X=X_{1}\cup\{t\}$ and $Y=Y_{1}\cup\{t\}$, where
$X_{1},Y_{1}\subseteq Z$. Now we get:

\smallskip

$\left(  \bigvee X\right)  \wedge\left(  \bigvee Y\right)  =\left[  \left(
\bigvee X_{1}\right)  \vee t\right]  \wedge\left[  \left(  \bigvee
Y_{1}\right)  \vee t\right]  =\varphi_{t}\left(  \bigvee X_{1}\right)
\wedge\varphi_{t}\left(  \bigvee Y_{1}\right)  =$

$\varphi_{t}\left(  \left(  \bigvee X_{1}\right)  \wedge\left(  \bigvee
Y_{1}\right)  \right)  =\varphi_{t}\left(  \bigvee(X_{1}\cap Y_{1})\right)
=\left(  \bigvee(X_{1}\cap Y_{1})\right)  \vee t=$

$\bigvee\left(  (X_{1}\cap Y_{1})\cup\{t\}\right)  =\bigvee\left(  \left(
X_{1}\cup\{t\}\right)  \cap\left(  Y_{1}\cup\{t\}\right)  \right)
=\bigvee(X\cap Y)$.

\smallskip

\noindent Finally, w.l.o.g. we can assume that $t\in X$ and $t\notin Y$. Then
$X=X_{1}\cup\{t\}$ and $X_{1},Y\subseteq Z$. Since $\left(  \bigvee Y\right)
\leq m$ we get:

$\left(  \bigvee X\right)  \wedge\left(  \bigvee Y\right)  =\left[  \left(
\bigvee X_{1}\right)  \vee t\right]  \wedge\left(  \bigvee Y\right)  \wedge
m=\left(  \left[  \left(  \bigvee X_{1}\right)  \vee t\right]  \wedge
m\right)  \wedge\left(  \bigvee Y\right)  $.

\noindent Because $(t,m)\in M$ and $\bigvee X_{1}\leq m$, we have

$\left(  \left[  \left(  \bigvee X_{1}\right)  \vee t\right]  \wedge m\right)
=\left(  \bigvee X_{1}\right)  \wedge(t\wedge m)=\bigvee X_{1}$.

\noindent As $X\cap Y=X_{1}\cap Y$, this result implies:

$\left(  \bigvee X\right)  \wedge\left(  \bigvee Y\right)  =\left(  \bigvee
X_{1}\right)  \wedge\left(  \bigvee Y\right)  =\bigvee(X_{1}\cap
Y)=\bigvee(X\cap Y)$,

\noindent proving (1), again. 
\end{proof}

\medskip

\begin{proposition}\label{prop. 2.2} Let $U\neq\emptyset$ be a finite set and
$\mathcal{B}$ a Boolean sublattice of Part$(U)$.

\noindent(i) If $\triangledown\notin\mathcal{B}$, then $\mathcal{B}$ is
contained in a strictly larger Boolean sublattice $\mathcal{B}^{^{\prime}}$
of Part$(U)$.

\noindent(ii) If $\triangledown\in\mathcal{B}$ but $\triangle\notin
\mathcal{B}$, then $\mathcal{B}$ is contained in a strictly larger Boolean
sublattice $\mathcal{B}^{^{\prime}}$ of Part$(U)$.
\end{proposition}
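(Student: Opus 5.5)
For part (i), the plan is to reduce to Lemma~\ref{lemma 2.1} inside an interval of Part$(U)$. Let $0_{\mathcal{B}}$ and $1_{\mathcal{B}}=m$ be the least and greatest elements of $\mathcal{B}$, and let $Z$ be the set of atoms of $\mathcal{B}$. Work in the interval $L=[0_{\mathcal{B}},\triangledown]$. This interval is a sublattice of Part$(U)$, and it is isomorphic to the partition lattice of the set of blocks of $0_{\mathcal{B}}$. In $L$ the least element is $0_{\mathcal{B}}$. Since $\mathcal{B}$ is Boolean with bottom $0_{\mathcal{B}}$, the set $Z$ is independent in $L$, and $\bigvee Z=m$.

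Because $m\neq\triangledown$, $m$ has at least two blocks. Each block of $m$ is a union of blocks of $0_{\mathcal{B}}$. Pick two blocks $P,Q$ of $0_{\mathcal{B}}$ lying in different blocks of $m$. Let $t$ be the partition obtained from $0_{\mathcal{B}}$ by merging $P$ and $Q$; this is an atom of $L$. Then $t\wedge m=0_{\mathcal{B}}$. Under the isomorphism $L\cong$ Part(blocks of $0_{\mathcal{B}}$), $t$ corresponds to a principal partition, which is a modular element. Hence $(t,m)\in M$ in $L$.

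Lemma~\ref{lemma 2.1} then gives that $Z\cup\{t\}$ is independent in $L$. So $\langle Z\cup\{t\}\rangle$ is a Boolean sublattice of $L$, hence of Part$(U)$. It contains $Z_{\vee}=\mathcal{B}$ and has dimension one larger. The degenerate case $|\mathcal{B}|=1$, where $Z=\emptyset$, is handled directly by taking $\{0_{\mathcal{B}},t\}$.

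For part (ii), a dual version of Lemma~\ref{lemma 2.1} would need $(t,0_{\mathcal{B}})\in M^{\ast}$ with the new element in the first slot. The fact available from the paper is the other way round: $(x,d)\in M^{\ast}$ when $d$ is a dual atom. So instead of invoking a dual lemma, I would build the extension explicitly. Since $\triangle\notin\mathcal{B}$, some block $S$ of $0_{\mathcal{B}}$ has $|S|\geq2$. Split it as $S=S_1\cup S_2$ with both parts nonempty, and let $\tau=\{S_1,U\setminus S_1\}$, which is a dual atom of Part$(U)$. Put
\[
\mathcal{B}'=\mathcal{B}\cup\{x\wedge\tau\mid x\in\mathcal{B}\}.
\]
For $x\geq 0_{\mathcal{B}}$, the partition $x\wedge\tau$ arises from $x$ by detaching $S_1$ from the block containing $S$. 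The aim is to show $\mathcal{B}'\cong\mathcal{B}\times\mathbf{2}$ as a sublattice, which is strictly larger and Boolean.

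The steps for (ii), in order, are:
\begin{enumerate}
\item Show injectivity via $(x\wedge\tau)\vee0_{\mathcal{B}}=x$, and note that $x\wedge\tau<x$ because $S_1$ is a proper nonempty part of $S$.
\item Show that meets are closed: $(x\wedge\tau)\wedge y=(x\wedge y)\wedge\tau$ and $(x\wedge\tau)\wedge(y\wedge\tau)=(x\wedge y)\wedge\tau$.
\item Show $x\vee(y\wedge\tau)=x\vee y$, since $x$ already contains $S$ in one block.
\item Show $(x\wedge\tau)\vee(y\wedge\tau)=(x\vee y)\wedge\tau$.
\end{enumerate}

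Step 4 is the one I expect to be the main obstacle. Every block of $x$ and of $y$ is a union of blocks of $0_{\mathcal{B}}$. After removing $S_1$, the chaining that forms the block of $x\vee y$ containing $S$ still goes through the common nonempty set $S_2$, and $S_1$ stays isolated. Making this connectivity argument precise, by tracking chains of overlapping blocks, is the delicate point. Once these identities hold, the map $(x,\varepsilon)\mapsto x$ or $x\wedge\tau$ is a lattice isomorphism $\mathcal{B}\times\mathbf{2}\to\mathcal{B}'$, which completes (ii).
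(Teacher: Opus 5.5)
Your part (i) is the paper's argument. The paper takes $t=0_{\mathcal B}\vee a$ for an atom $a\nleq m$ of Part$(U)$, which is exactly your partition merging two blocks of $0_{\mathcal B}$ lying in different blocks of $m$. It uses that $t$ is an atom, hence modular, in the filter above $0_{\mathcal B}$, and applies Lemma~\ref{lemma 2.1}.

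In (ii) you take a genuinely different route. The paper dualizes. For $u$ in a non-singleton block of $0_{\mathcal B}$ it sets $t=\{\{u\},U\setminus\{u\}\}=\pi_{U\setminus\{u\}}$. It derives $(t,0_{\mathcal B})\in M^{\ast}$ from the modularity of principal partitions via a lemma of Maeda and Maeda. It then applies Lemma~\ref{lemma 2.1} in Part$(U)^{(d)}$ to the dual atoms of $\mathcal B$ together with $t$; this is where $\triangledown\in\mathcal B$ enters, so that those dual atoms are independent in the dual lattice. The lattice generated consists of meets of these elements, i.e.\ it is exactly your $\mathcal B\cup\{x\wedge\tau\mid x\in\mathcal B\}$ with $S_1=\{u\}$.

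Your Step 1 is in fact precisely the hypothesis you thought was unavailable. Since $\tau\vee 0_{\mathcal B}=\triangledown$, the relation $(\tau,0_{\mathcal B})\in M^{\ast}$ says just that $c=(c\wedge\tau)\vee 0_{\mathcal B}$ for all $c\geq 0_{\mathcal B}$. So after Step 1 the dual of Lemma~\ref{lemma 2.1} would deliver the rest.

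Your hands-on version buys two things:
\begin{itemize}
\item self-containedness, with no external citation;
\item slightly more generality, since any proper nonempty $S_1\subset S$ works and $\triangledown\in\mathcal B$ is never used.
\end{itemize}
Step 4 closes in one line. In a chain of $x$- and $y$-blocks joining two points outside $S_1$, every block meeting $S_1$ contains all of $S$. So each point of $S_1$ on the chain may be replaced by a fixed point of $S_2$.

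Step 4 is also not wasted effort. It is the dual of the identity $(y\vee t)\wedge(z\vee t)=(y\wedge z)\vee t$ for $y,z\leq m$. The proof of Lemma~\ref{lemma 2.1} obtains that identity only by calling an order isomorphism onto $\varphi_t(Z_\vee)$ a lattice isomorphism. The identity does not follow from $(t,m)\in M$ and $t\wedge m=0$ alone. In part (i), where both you and the paper rely on Lemma~\ref{lemma 2.1}, it holds by an equally easy direct check, because $t$ merges two blocks lying in different blocks of $m$.
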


\begin{proof} Denote by $m$ the top element of $\mathcal{B}$, and
by $b$ its least element.
\noindent(i) We may assume $m\neq\triangle$, for otherwise the statement holds
obviously. Let $a$ be an atom of Part$(U)$ with $a\nleq m$ and $t=b\vee a$.
Since Part$(U)$ is a semimodular lattice we get $b\prec t$. Then clearly,
$t\wedge m=b$. As the principal filter $F=\{\pi\in\ $Part$(U)\mid\pi\geq b\}$
of Part$(U)$ is isomorphic to a partition lattice, and $t$ is an atom in $F$,
$(t,\pi)\in M$, holds for all $\pi\in F$. In particular, $(t,m)\in M$. Since
$\mathcal{B}$ is a finite Boolean lattice, the set $A(\mathcal{B})$ of its
atoms is independent. Now, by applying Lemma~\ref{lemma 2.1}, we get that $A(\mathcal{B}%
)\cup\{t\}$ is a (finite) independent system. Hence, the sublattice
$\mathcal{B}^{^{\prime}}\supseteq\mathcal{B}$ of Part$(U)$ generated by
$A(\mathcal{B})\cup\{t\}$ is a Boolean lattice. Since $t\in\mathcal{B}%
^{\prime}\setminus\mathcal{B}$, our statement holds.

\noindent(ii) We may assume\textit{ }$b\neq\triangledown$. Let $X$ be a
non-singleton block of $b$, $x\in X$, and denote by $t$ the partition with the
blocks $\{x\}$ and $U\setminus\{x\}$. \ Then $t$ is a dual atom of Part$(U)$
and $t\vee b=\triangledown$. We show that $(t,b)\in M^{\ast}$. Since $t$ is a
principal partition, we have $(t,p)\in M$, for all $p\in\ Part(U)$. Then
$(t,p)\in M^{\ast}$, for all $p\in\ $Part$(U)$ cf. Maeda \& Maeda \cite[Lemma 1.2]{MaedaMaeda2012}. In particular, $(t,b)\in M^{\ast}$, and this yields $(t,b)\in M$ in the
dual Part$(U)^{(d)}$ of the lattice Part$(U)$.

The set $D(\mathcal{B})$ of dual atoms of $\mathcal{B}$ is an independent
system in Part$(U)^{(d)}$, where their join $\bigvee^{(d)}D(\mathcal{B})$
equals $b$, and $t\wedge^{(d)}b$ equals the least element $\triangledown$ of
Part$(U)^{(d)}$. Thus we can apply Lemma~\ref{lemma 2.1} for $D(\mathcal{B})\cup\{t\}$ in
the lattice Part$(U)^{(d)}$, and we get that $D(\mathcal{B})\cup\{t\}$ is a
finite independent set, hence it generates a Boolean sublattice $\mathcal{B}%
^{^{\prime}}$ of Part$(U)^{(d)}$. Then $\mathcal{B}^{^{\prime}}$ is also a
Boolean sublattice in Part$(U)$. Since $\mathcal{B\subseteq B}^{^{\prime}}$
and $t\in\mathcal{B}^{^{\prime}}\setminus\mathcal{B}$, our statement holds.
\end{proof}

The following consequence is immediate:

\medskip

\begin{corollary}\label{cor. 2.3} Let $U\neq\emptyset$ be a finite set. Any
Boolean sublattice $\mathcal{B}$ of Part$(U)$ is included in a Boolean
sublattice $\mathcal{B}^{\ast}$ of Part$(U)$ containing $\triangle$ and
$\triangledown$.
\end{corollary}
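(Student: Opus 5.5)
The plan is to iterate Proposition~\ref{prop. 2.2} and use finiteness of Part$(U)$ to make the process stop. Starting from $\mathcal{B}_0:=\mathcal{B}$, we build a chain
\[
\mathcal{B}_0\subsetneq\mathcal{B}_1\subsetneq\mathcal{B}_2\subsetneq\cdots
\]
of Boolean sublattices of Part$(U)$ by the following rule. If $\triangledown\notin\mathcal{B}_k$, let $\mathcal{B}_{k+1}$ be the strictly larger Boolean sublattice supplied by Proposition~\ref{prop. 2.2}(i). Otherwise $\triangledown\in\mathcal{B}_k$, and if $\triangle\notin\mathcal{B}_k$ we let $\mathcal{B}_{k+1}$ be the strictly larger one supplied by Proposition~\ref{prop. 2.2}(ii).

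Since $U$ is finite, Part$(U)$ is finite. A strictly increasing chain of subsets of Part$(U)$ therefore has length at most $|\mathrm{Part}(U)|$, so the construction halts at some $\mathcal{B}_N$. By the way the rule is set up, it halts only when neither case applies. That means $\triangledown\in\mathcal{B}_N$ and $\triangle\in\mathcal{B}_N$ hold simultaneously, and we put $\mathcal{B}^{\ast}:=\mathcal{B}_N\supseteq\mathcal{B}$.

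One point needs a brief check: the extensions preserve the property $\triangledown\in\mathcal{B}_k$, because each $\mathcal{B}_{k+1}$ contains $\mathcal{B}_k$. Even without this observation the argument is sound, since at every stage we simply apply whichever case of Proposition~\ref{prop. 2.2} is currently relevant, and termination does not depend on the order in which the cases occur.

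There is no real obstacle here: all the substance is in Proposition~\ref{prop. 2.2}, and the corollary is a finiteness argument on top of it. The only degenerate case is $|U|=1$. Then $\triangle=\triangledown$, so every nonempty Boolean sublattice already contains both, and $\mathcal{B}^{\ast}=\mathcal{B}$ works.
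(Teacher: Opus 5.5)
Your proof is correct and is the argument the paper has in mind when it calls the corollary an immediate consequence of Proposition~\ref{prop. 2.2}. You apply parts (i) and (ii) repeatedly, and finiteness of Part$(U)$ forces the strictly increasing chain to stop at a Boolean sublattice containing both $\triangle$ and $\triangledown$.
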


Let $L$ be a lattice with a least element $0$. A nonempty subset
$D=\{d_{i}\mid i\in I\}\subseteq L$ is called a \emph{disjoint system}, if for
any $i,j\in I$, $i\neq j$, we have $d_{i}\wedge d_{j}=0$. $D$ is called
\emph{weakly independent} if $d_{i}\wedge\left(  \underset{j\in I\setminus
\{i\}}{\bigvee d_{j}}\right)  =0$ holds for all $i\in I$. Clearly, any
independent subset of $L$ is also weakly independent, and any weakly
independent subset of $L$ is a disjoint system, moreover, if $L$ is
distributive, then any disjoint system of $L$ is weakly independent (see
Horv\'{a}th and Radeleczki \cite{HorvathRadeleczki2012}). Now let $H$ be a nonempty subset of $L$. A
finite join of some elements in $H_{\wedge}$ is called an $\vee\wedge
$\emph{-element}, and their set is denoted by $H_{\vee\wedge}$. The following
result of S. Tamura \cite{Tamura1971} will be useful in our proofs:

\medskip

\begin{proposition}\label{prop. 2.4}(\cite[Corollary 2.]{Tamura1971}) Let $\langle
H\rangle$ be a sublattice generated by a nonempty subset $H$ of
a lattice $L$. Then the following are equivalent:

\noindent(i) $\langle H\rangle$ is distributive.

\noindent(ii) $(a\vee b)\wedge c=(a\wedge c)\vee(b\wedge c)$ holds for
any $a,b,c\in H_{\vee\wedge}$.
\end{proposition}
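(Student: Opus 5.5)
The implication (i)$\Rightarrow$(ii) is immediate, since $H_{\vee\wedge}\subseteq\langle H\rangle$ and a distributive lattice satisfies the distributive law for all of its elements. For (ii)$\Rightarrow$(i), the plan is to show that $H_{\vee\wedge}$ is itself a sublattice of $L$. Then $H\subseteq H_{\vee\wedge}\subseteq\langle H\rangle$ forces $H_{\vee\wedge}=\langle H\rangle$. Once this holds, (ii) says exactly that the distributive law $(a\vee b)\wedge c=(a\wedge c)\vee(b\wedge c)$ holds for all $a,b,c\in\langle H\rangle$, so $\langle H\rangle$ is distributive.

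Closure of $H_{\vee\wedge}$ under $\vee$ is trivial, because a join of two finite joins of $\wedge$-elements is again such a join. The real step is closure under $\wedge$. Let $x=p_{1}\vee\dots\vee p_{k}$ and $y=q_{1}\vee\dots\vee q_{l}$ with all $p_{i},q_{j}\in H_{\wedge}$. I aim to prove
\[
x\wedge y=\bigvee_{i,j}(p_{i}\wedge q_{j}),
\]
which lies in $H_{\vee\wedge}$ because each $p_{i}\wedge q_{j}$ lies in $H_{\wedge}$.

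I would do this by a double induction. First fix $p\in H_{\wedge}$ and induct on $l$ to show $p\wedge y=\bigvee_{j}(p\wedge q_{j})$. The case $l=1$ is trivial. For $l>1$, write $y=q_{1}\vee y'$ with $y'=q_{2}\vee\dots\vee q_{l}\in H_{\vee\wedge}$. Apply (ii) with $a=q_{1}$, $b=y'$, $c=p$, all in $H_{\vee\wedge}$, and use commutativity of $\wedge$. This gives $p\wedge y=(p\wedge q_{1})\vee(p\wedge y')$, and the induction hypothesis applies to $p\wedge y'$.

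Next induct on $k$ in the same way. Write $x=p_{1}\vee x'$ and apply (ii) with $a=p_{1}$, $b=x'$, $c=y$, which yields $x\wedge y=(p_{1}\wedge y)\vee(x'\wedge y)$. The first inductive step handles $p_{1}\wedge y$, and the induction hypothesis handles $x'\wedge y$.

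The only point needing care is that every application of (ii) must use arguments lying in $H_{\vee\wedge}$, not in the a priori larger $\langle H\rangle$. This holds in the scheme above, since all of $q_{1},y',p,p_{1},x',y$ belong to $H_{\vee\wedge}$. Beyond this check I expect no real obstacle.
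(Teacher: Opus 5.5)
Your proof is correct. Note that the paper gives no proof of its own here: the proposition is quoted from Tamura (Corollary 2 there). So your argument is a self-contained replacement for that citation rather than a variant of an in-paper route.

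The decisive observation is that hypothesis (ii) already gives the expansion $x\wedge y=\bigvee_{i,j}(p_i\wedge q_j)$ for $x=\bigvee_i p_i$, $y=\bigvee_j q_j$, even though it is applied only to elements of $H_{\vee\wedge}$. Hence $H_{\vee\wedge}$ is closed under $\wedge$, which forces $H_{\vee\wedge}=\langle H\rangle$. After that, (ii) is literally the distributive law on $\langle H\rangle$. Your double induction establishes the expansion cleanly, and you correctly check that every application of (ii) has all three arguments ($q_1,y',p$ and $p_1,x',y$) in $H_{\vee\wedge}$ rather than merely in $\langle H\rangle$.

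The one step you leave implicit is standard. A lattice satisfying $(a\vee b)\wedge c=(a\wedge c)\vee(b\wedge c)$ for all elements also satisfies the dual identity, so it is distributive. In many texts this single identity is taken as the definition, so nothing is missing.

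The citation saves the authors space. Your argument shows that, with $a,b,c$ ranging over all of $H_{\vee\wedge}$ as stated here, the result is elementary.
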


\noindent Let $\mathcal{B}$ be a finite Boolean lattice. Then $\mathcal{B}$ is
an atomistic lattice, and we will denote its set of atoms by $A(\mathcal{B})$.
By using the previous results, we prove

\medskip

\begin{lemma}\label{lemma 2.5} Let $L$ be a finite lattice and $D\subseteq
L\setminus\{0\}$ a disjoint system in $L$. Then the following assertions are equivalent:

\noindent(a) $D_{\vee}\cup\{0\}$ is a Boolean sublattice of $L$.

\noindent(b) $D_{\vee}\cup\{0\}$ is an atomistic lattice, and for any
$H\subseteq D_{\vee}$, $H\neq\emptyset$ and $d\in D$, $d\leq\vee H$ implies
$d\leq h$, for some $h\in H$.

\noindent(c) $(a\vee b)\wedge c=(a\wedge c)\vee(b\wedge c)$ holds for every
$a,b,c\in D_{\vee}\cup\{0\}$.

\noindent(d) $\langle D\rangle$ is semimodular and $D$ is weakly independent.

\noindent(e) $D$ is independent in $L$.
\end{lemma}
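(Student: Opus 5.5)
I will prove the equivalences in the cycle (e)$\Rightarrow$(a)$\Rightarrow$(c)$\Rightarrow$(a), then (a)$\Rightarrow$(b)$\Rightarrow$(e), and finally (a)$\Leftrightarrow$(d). Throughout, $L$ is finite, so $D$ is finite and $D_{\vee}$ is finite.

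For (e)$\Rightarrow$(a), I quote the result of Gr\"{a}tzer recalled before Lemma~\ref{lemma 2.1}. The map $X\mapsto\bigvee X$ is an isomorphism from $\wp(D)$ onto $\langle D\rangle=D_{\vee}\cup\{0\}$, where the empty join is $0$. So this set is a Boolean sublattice.

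The implication (a)$\Rightarrow$(c) is immediate, since Boolean lattices are distributive.

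For (c)$\Rightarrow$(a), I apply Proposition~\ref{prop. 2.4} with $H=D_{\vee}\cup\{0\}$.
\begin{itemize}
\item[$\bullet$] First I show that $H$ is closed under meets, so that $H_{\vee\wedge}=H$ and $H=\langle D\rangle$. For $a=\bigvee X$ and $c=\bigvee Y$, apply (c) repeatedly. This gives $a\wedge c=\bigvee_{x\in X,y\in Y}(x\wedge y)$. Since $D$ is disjoint, $x\wedge y$ equals $x$ if $x=y$ and $0$ otherwise. Hence $a\wedge c=\bigvee(X\cap Y)\in H$.
\item[$\bullet$] Proposition~\ref{prop. 2.4} then gives that $H$ is a distributive sublattice.
\item[$\bullet$] The computation above is exactly relation (2) for $D$, so $D$ is independent. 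This proves (c)$\Rightarrow$(e) and hence (a).
\end{itemize}

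For (a)$\Rightarrow$(b), I first check that the atoms of the Boolean lattice $D_{\vee}\cup\{0\}$ are exactly the elements of $D$.
\begin{itemize}
\item[$\bullet$] Each atom is a join of elements of $D$, so it must equal some $d\in D$.
\item[$\bullet$] Conversely, take $d\in D$ and suppose $0<e<d$ with $e$ an atom. Then $e=d'\in D$ with $d'\neq d$. But $d'\leq d$ gives $d'\wedge d=d'\neq0$, which contradicts disjointness.
\end{itemize}
Since the atoms of a Boolean lattice are join-prime, $d\leq\bigvee H$ implies $d\leq h$ for some $h\in H$. Atomisticity is clear.

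For (b)$\Rightarrow$(e), let $X,Y\subseteq D$ and consider $z=(\bigvee X)\wedge(\bigvee Y)$. This is a meet taken in $L$, and I expect this step to be the main obstacle, because we do not yet know that it lies in $D_{\vee}\cup\{0\}$. Let $w$ be the meet computed inside the lattice $D_{\vee}\cup\{0\}$. By atomisticity, $w$ is a join of atoms; by the argument above, the atoms are elements of $D$. Every $d\leq w$ satisfies $d\leq\bigvee X$, so by (b) $d\leq x$ for some $x\in X$, which forces $d=x$ by disjointness. Likewise $d\in Y$. Hence $w=\bigvee(X\cap Y)$.

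It remains to show $z=w$, i.e.\ $z\in D_{\vee}\cup\{0\}$. Here I would exploit that (b) says $D_{\vee}\cup\{0\}$ is a sublattice. I read the phrase ``is an atomistic lattice'' in the statement as a sublattice assertion, consistent with (a). If that reading fails, I would instead route through (d) and Lemma~\ref{lemma 2.1}. Adding the elements of $D$ one at a time, weak independence gives $t\wedge m=0$ at each step. Semimodularity of $\langle D\rangle$ supplies the modular pair $(t,m)\in M$ for the new element $t$, because $t$ is an atom of $\langle D\rangle$. This gives (d)$\Rightarrow$(e).

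For (a)$\Rightarrow$(d): a Boolean lattice is semimodular, and independence, which follows from (a) via (c), implies weak independence. For (d)$\Rightarrow$(e), I argue that in the semimodular lattice $\langle D\rangle$ the elements of $D$ are atoms. I then induct using Lemma~\ref{lemma 2.1} applied inside $\langle D\rangle$. There atoms are right modular, so $(t,m)\in M$ holds in $\langle D\rangle$. Since meets of joins of $D$ are computed in $\langle D\rangle$, relation (2) transfers to $L$.
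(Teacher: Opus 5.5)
Your proof is correct in outline and takes a genuinely different, more elementary route than the paper; one step in (d)$\Rightarrow$(e) needs repair, described in the second paragraph.

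\textbf{Comparison of routes.} The paper runs the single cycle (a)$\Rightarrow$(b)$\Rightarrow$(c)$\Rightarrow$(d)$\Rightarrow$(e)$\Rightarrow$(a) and delegates each nontrivial step to an external result:
\begin{itemize}
\item R\'enyi's criterion for (b)$\Rightarrow$(c);
\item Tamura's Proposition~\ref{prop. 2.4} together with Horv\'ath--Radeleczki for (c)$\Rightarrow$(d);
\item Gr\"atzer's ``weakly independent implies independent in finite semimodular lattices'' for (d)$\Rightarrow$(e).
\end{itemize}
You instead make (e) the hub. Expanding $(\bigvee X)\wedge(\bigvee Y)$ by repeated use of (c) and collapsing with disjointness gives (c)$\Rightarrow$(e) in one line; the appeal to Proposition~\ref{prop. 2.4} there is superfluous. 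Your atom and join-prime computation gives (b)$\Rightarrow$(e) without R\'enyi. You get (d)$\Rightarrow$(e) from the paper's own Lemma~\ref{lemma 2.1}. You prove a few more implications, but you remove three external citations and show directly why (c) forces independence.

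Your sublattice reading of (b) is right, and in fact forced; the paper's (b)$\Rightarrow$(c) tacitly uses it too. In its induced order, $D_{\vee}\cup\{0\}$ is always atomistic with atom set $D$. Now take the cube $2^3$ on atoms $p,q,r$, with $a=p\vee q$, $b=p\vee r$, $c=q\vee r$. Insert one element $s$ with $p<s<a$ and $s<b$. Then $D=\{p,q,r\}$ satisfies the join-prime condition, yet $a\wedge b=s\neq p$. So drop your fallback; it never derives (d) from (b) anyway.

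\textbf{The step to repair.} In (d)$\Rightarrow$(e) you justify $(t,m)\in M$ by ``atoms are right modular''. Right modularity gives $(m,t)\in M$, which holds for an atom in any lattice. Lemma~\ref{lemma 2.1} needs $t$ in the left slot. If your stated reason worked, semimodularity would play no role, and the example above shows it must. There $p,q,r$ are weakly independent atoms of $\langle D\rangle=L$. Yet with $Z=\{p,q\}$, $t=r$, $m=a$ and $c=p\leq m$, one gets $(c\vee t)\wedge m=b\wedge a=s\neq c$.

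The correct argument is the one you gesture at in your (b) paragraph. Let $t$ be an atom of the semimodular lattice $\langle D\rangle$ with $t\wedge m=0$, and let $c\leq m$.
\begin{itemize}
\item Since $t\not\leq c$, semimodularity gives $c\prec c\vee t$.
\item Since $t\not\leq m$, we have $(c\vee t)\wedge m\neq c\vee t$.
\item Hence $(c\vee t)\wedge m=c$, which is exactly $(t,m)\in M$.
\end{itemize}

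You also assert without proof that the elements of $D$ are atoms of $\langle D\rangle$. This needs weak independence. By induction on lattice terms, every $e\in\langle D\rangle$ satisfies $d\leq e$ or $e\leq\bigvee(D\setminus\{d\})$. So $0<e<d$ would force $e\leq d\wedge\bigvee(D\setminus\{d\})=0$. With these two fixes your chain of implications is complete.
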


\begin{proof} (a)$\Rightarrow$(b). Since $B=D_{\vee}\cup\{0\}$ is a
finite Boolean lattice, it is atomistic, too. Let $a\in D_{\vee}$. If $a$ is
an atom of $B$, then we get $a\in D$. Hence $A(B)\subseteq D$. Observe that
for every $d\in D$ and $a_{1},...a_{k}\in A(B)$, $d=a_{1}\vee...\vee a_{k}$
implies $d=a_{1}=...=a_{k}$, as otherwise for any $a_{i}\neq d$ we would get
the contradiction $a_{i}=d\wedge a_{i}=0$, because $a_{i},d\in D$ and $D$ is a
disjoint system. This means that $A(B)=D$. Since $B$ is a Boolean lattice, its
atoms, i.e. the elements of $D$, are \emph{completely join-prime elements},
which means that for any $H\subseteq B$, $H\neq\emptyset$ and any $d\in
D=A(B)$, $d\leq\vee H$ implies $d\leq h$, for some $h\in H$. Thus (b) holds.

\noindent(b)$\Rightarrow$(c). According to (b), $B=D_{\vee}\cup\{0\}$ is an
atomistic lattice, so its join-irreducible elements are just its atoms lying
in $D$. Since by (b) these elements are completely join-prime in $B$, in view
of Reney \cite{Reny1952} result, $B$ is a distributive lattice. Hence (c) holds.

\noindent(c)$\Rightarrow$(d). Since $D$ is a disjoint system, we have
$D_{\wedge}=D\cup\{0\}$ and $D_{\vee\wedge}=D_{\vee}\cup\{0\}$. If (c) holds,
then $\langle D\rangle$ is a distributive lattice according to Proposition~\ref{prop. 2.4}. Because $D$ is a finite disjoint set in $\langle D\rangle$, in view of
\cite{HorvathRadeleczki2012} $D$ is weakly independent. Clearly, the lattice $\langle D\rangle$ is
semimodular, too.

\noindent(d)$\Rightarrow$(e) In view of \cite[Thm.4]{Gratzer2011} in a finite semimodular
lattice any weakly independent system of elements is independent. Thus $D$ is
independent in $L$.

\noindent(e)$\Rightarrow$(a). If $D\subseteq L$ is independent, then in view
of \cite{Gratzer2011}, the finite sublattice $\langle D\rangle$ of $L$ is a Boolean lattice,
and each element $x\in\langle D\rangle\setminus\{0\}$ is the image by the map
$\varphi\colon X\rightarrow\bigvee X$, of some subset $X\subseteq D$, i.e.
$x=\bigvee X\in D_{\vee}$. Hence $\langle D\rangle=$ $D_{\vee}\cup\{0\}$, thus
$D_{\vee}\cup\{0\}$ is a Boolean sublattice of $L$. 
\end{proof}

Next, we will examine the extensions of the Boolean sublattices of finite lattice $L$.

\medskip

\begin{lemma}\label{lemma 2.6} Let $\mathcal{B}$ be a Boolean sublattice of a
finite lattice $L$ such that $0,1\in B$. If $\mathcal{B}$ is strictly included
in a Boolean sublattice $\mathcal{D}$ of a $L$, then there exists an atom
$\alpha\in A(\mathcal{B})$ such that $\alpha$ is a proper join of some atoms
of $\mathcal{D}$ strictly less than $\alpha$. Moreover, there exists a proper
extension $\mathcal{B}^{\prime}$ of $\mathcal{B}$ included in $\mathcal{D}$,
such that its atoms are obtained by replacing in $A(\mathcal{B})$ this
$\alpha$ by two atoms $\delta,\delta^{\prime}$ of $\mathcal{B}^{\prime}$ with
$\delta\vee\delta^{\prime}=\alpha$.
\end{lemma}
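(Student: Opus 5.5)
The plan is to work inside the finite Boolean lattice $\mathcal{D}$, using that every element is uniquely the join of the atoms of $\mathcal{D}$ below it, and that distinct atoms of $\mathcal{D}$ meet in the least element of $\mathcal{D}$.

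\textbf{Locating $\mathcal{D}$ in $L$.} Since $0,1\in\mathcal{B}\subseteq\mathcal{D}$, both lattices share their least element $0$ and greatest element $1$. Hence $0$ is also the bottom of $\mathcal{D}$.

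\textbf{Finding $\alpha$.} For each $\beta\in A(\mathcal{B})$, let $S_\beta\subseteq A(\mathcal{D})$ be the set of atoms of $\mathcal{D}$ below $\beta$; then $\beta=\bigvee S_\beta$. For distinct $\beta,\beta'\in A(\mathcal{B})$ we have $\beta\wedge\beta'=0$, and meets agree in the sublattices, so $S_\beta\cap S_{\beta'}=\emptyset$. Also $\bigvee A(\mathcal{B})=1$ is the join of all atoms of $\mathcal{D}$, so the $S_\beta$ partition $A(\mathcal{D})$.

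Now suppose every $S_\beta$ were a singleton. Then $A(\mathcal{D})=A(\mathcal{B})$. Every element of $\mathcal{D}$ is a join of atoms of $\mathcal{D}$, so it would lie in $\mathcal{B}$, giving $\mathcal{B}=\mathcal{D}$, a contradiction. Hence some $\alpha\in A(\mathcal{B})$ has $|S_\alpha|\ge 2$. Then $\alpha=\bigvee S_\alpha$ is a proper join of atoms of $\mathcal{D}$, each strictly below $\alpha$ (if one equalled $\alpha$, it would be the only atom below $\alpha$).

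\textbf{Constructing $\mathcal{B}'$.} Split $S_\alpha=P\cup Q$ into two nonempty disjoint parts, and set $\delta=\bigvee P$ and $\delta'=\bigvee Q$. Then $\delta\vee\delta'=\alpha$ and $\delta\wedge\delta'=0$ in $\mathcal{D}$. Let $Z=(A(\mathcal{B})\setminus\{\alpha\})\cup\{\delta,\delta'\}$. The elements of $Z$ correspond to the pairwise disjoint nonempty atom sets $S_\beta$ ($\beta\neq\alpha$), $P$ and $Q$, which together partition $A(\mathcal{D})$.

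Joins and meets of $\mathcal{D}$ coincide with those of $L$. Under the isomorphism $\mathcal{D}\cong\wp(A(\mathcal{D}))$, joins of elements of $Z$ correspond to unions of blocks of this partition, and meets to intersections. Hence relation (2) holds for all subsets $X,Y\subseteq Z$, so $Z$ is independent in $L$. It generates a Boolean sublattice $\mathcal{B}'=Z_\vee\cup\{0\}\subseteq\mathcal{D}$; this follows from Lemma~\ref{lemma 2.5}, or directly because it is the image of the Boolean algebra of unions of blocks.

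Its atoms are exactly the elements of $Z$. It contains $\mathcal{B}$, since every element of $\mathcal{B}$ is a join of atoms of $\mathcal{B}$, and $\alpha=\delta\vee\delta'$. It also contains $\delta\notin\mathcal{B}$: an element of $\mathcal{B}$ below $\alpha$ is $0$ or $\alpha$, while $\delta$ is neither. So the extension is proper.

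The main subtlety is ensuring that all operations are taken in $L$. This is automatic because $\mathcal{D}$ is a sublattice containing $0$ and $1$, which is why the hypothesis $0,1\in\mathcal{B}$ is needed: it guarantees that the atoms of $\mathcal{B}$ partition the atoms of $\mathcal{D}$.
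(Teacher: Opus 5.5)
Your proof is correct, but it is organized differently from the paper's.

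\textbf{Locating $\alpha$.} The paper fixes one atom $\delta$ of $\mathcal{D}$ with $\delta\notin\mathcal{B}$. It then uses distributivity of $\mathcal{D}$, $\delta=\delta\wedge\bigvee A(\mathcal{B})=\bigvee_{a\in A(\mathcal{B})}(\delta\wedge a)$, to conclude $\delta<\alpha$ for some $\alpha\in A(\mathcal{B})$. You instead prove globally that the sets $S_\beta$ partition $A(\mathcal{D})$, and you obtain $\alpha$ by noting that all-singleton blocks would force $\mathcal{B}=\mathcal{D}$.

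\textbf{Constructing $\mathcal{B}'$.} The paper uses the special split $P=\{\delta\}$, $Q=S_\alpha\setminus\{\delta\}$. It takes $\mathcal{B}'$ to be the sublattice generated by $\delta'$ and all atoms of $\mathcal{D}$ not below $\delta'$, and asserts without argument that it extends $\mathcal{B}$. As written, that lattice has as atoms $\delta$, $\delta'$ and every atom of $\mathcal{D}$ not below $\alpha$. So it also splits the other atoms of $\mathcal{B}$. Its atom set equals the claimed $(A(\mathcal{B})\setminus\{\alpha\})\cup\{\delta,\delta'\}$ only when those other atoms are already atoms of $\mathcal{D}$.

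Your generating set $Z$, with independence checked through $\mathcal{D}\cong\wp(A(\mathcal{D}))$, yields exactly that atom set, hence exactly $|A(\mathcal{B})|+1$ atoms. This is what Corollary~\ref{cor 2.7} needs, both to identify the form of an immediate extension and to compare sizes.

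The paper's argument is shorter, and one of its two new atoms is an atom of $\mathcal{D}$; you recover this by taking $|P|=1$. Your version describes $\mathcal{B}$ and $\mathcal{B}'$ completely as algebras of unions of blocks of partitions of $A(\mathcal{D})$, so every claim, including properness of the extension, is verified rather than asserted.
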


\begin{proof} Because $\mathcal{D}$ is finite, any element of it is
a join of some atoms of $\mathcal{D}$. As $\mathcal{B\subseteq D}$, this holds
true for any atom of $\mathcal{B}$. Clearly, at least one atom $\delta$ of
$\mathcal{D}$ cannot belong to $\mathcal{B}$, otherwise $\mathcal{B}$ and
$\mathcal{D}$ would coincide. Thus $\delta\notin A(\mathcal{B})$. Now assume
that there is no any atom $\alpha$ of $\mathcal{B}$ that satisfies
$\delta<\alpha$. Then $\delta\wedge a=0$ for all $a\in A(\mathcal{B})$ imply
$\delta=\delta\wedge1=\delta\wedge\left(  \bigvee\{a\mid a\in A(\mathcal{B}%
)\}\right)  =\bigvee\{\delta\wedge a\mid a\in A(\mathcal{B})\}=0$, a
contradiction. Thus there exists at least one $\alpha\in A(\mathcal{B})$ with
$\delta<\alpha$. Then $\alpha$ is a proper join of some atoms of $\mathcal{D}$
strictly less than $\alpha$. Now let $\delta^{\prime}=\bigvee\{a\in
A(\mathcal{D})\mid a\leq\alpha$, $a\neq\delta\}$. Then clearly $\delta
\vee\delta^{\prime}=\alpha$ and $\delta\wedge\delta^{\prime}=0$. Let $B\prime$
be the Boolean sublattice of $\mathcal{D}$ generated by $\delta^{\prime}$ and
all the atoms of $\mathcal{D}$ not less than $\delta^{\prime}$ (among these
$\delta$). Clearly, $\mathcal{B\prime}$ extends $\mathcal{B}$ in $\mathcal{D}%
$, and $\delta$ and $\delta^{\prime}$ are atoms of $\mathcal{B\prime}$.
\end{proof}

We say that a proper extension $\mathcal{B\subset B}^{\sharp}$ of a Boolean
algebra $\mathcal{B}$ is an \emph{immediate (Boolean) extension} of it, if
there is no any Boolean algebra $\mathcal{D}$ with $\mathcal{B\subset D\subset
B}^{\sharp}$, where $\mathcal{\subset}$ means strict inclusion.

\medskip

\begin{corollary}\label{cor 2.7} Every\textbf{ }immediate Boolean extension
$\mathcal{B\subset B}^{\sharp}$ of a Boolean lattice $\mathcal{B}$ within a
finite lattice $L$ is obtained by replacing an atom $\alpha\in A(\mathcal{B})$
by two atoms $\delta,\delta^{\prime}$ of $\mathcal{B}^{\sharp}$ such that
$\alpha=\delta\vee\delta^{\prime}$.
\end{corollary}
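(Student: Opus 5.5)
The plan is to derive the corollary directly from Lemma~\ref{lemma 2.6}, with the immediacy hypothesis doing the remaining work. I read the corollary in the setting of Lemma~\ref{lemma 2.6}: the Boolean lattice $\mathcal{B}$ contains the bounds $0,1$ of $L$. More generally, it suffices that $\mathcal{B}$ and $\mathcal{B}^{\sharp}$ have the same least and greatest elements. In that case I would replace $L$ by the interval $[0_{\mathcal{B}},1_{\mathcal{B}}]$, which is again a finite lattice containing both $\mathcal{B}$ and $\mathcal{B}^{\sharp}$ as sublattices, and whose bounds lie in $\mathcal{B}$. Without some such hypothesis the statement fails. For example, if $\mathcal{B}=\{b\}$ and $\mathcal{B}^{\sharp}$ is a two-element chain, then $\mathcal{B}$ has no atom to replace. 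So this normalization is the first step.

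Next, given an immediate extension $\mathcal{B}\subset\mathcal{B}^{\sharp}$, I apply Lemma~\ref{lemma 2.6} with $\mathcal{D}:=\mathcal{B}^{\sharp}$. The lemma yields an atom $\alpha\in A(\mathcal{B})$ and atoms $\delta,\delta^{\prime}$ with $\delta\vee\delta^{\prime}=\alpha$ (and $\delta\wedge\delta^{\prime}=0$). It also yields a Boolean sublattice $\mathcal{B}^{\prime}$ with $\mathcal{B}\subseteq\mathcal{B}^{\prime}\subseteq\mathcal{B}^{\sharp}$, whose atom set is $(A(\mathcal{B})\setminus\{\alpha\})\cup\{\delta,\delta^{\prime}\}$. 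The extension $\mathcal{B}\subset\mathcal{B}^{\prime}$ is proper. Indeed, $\delta$ is an atom of $\mathcal{B}^{\prime}$ lying strictly below the atom $\alpha$ of $\mathcal{B}$ and distinct from $0$, so $\delta\notin\mathcal{B}$. (Alternatively, $\mathcal{B}^{\prime}$ has one more atom than $\mathcal{B}$, hence twice the size.)

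Finally, since $\mathcal{B}\subset\mathcal{B}^{\prime}\subseteq\mathcal{B}^{\sharp}$ and the extension is immediate, the strict inclusion $\mathcal{B}^{\prime}\subset\mathcal{B}^{\sharp}$ is impossible, so $\mathcal{B}^{\prime}=\mathcal{B}^{\sharp}$. Therefore $A(\mathcal{B}^{\sharp})=(A(\mathcal{B})\setminus\{\alpha\})\cup\{\delta,\delta^{\prime}\}$ with $\alpha=\delta\vee\delta^{\prime}$, which is exactly the claimed description.

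The only delicate point, and the step I expect to be the main obstacle, is the normalization in the first step: making sure the bound hypothesis of Lemma~\ref{lemma 2.6} is available. The rest is a one-line application of the lemma combined with immediacy.
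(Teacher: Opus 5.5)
Your proof is correct and is the paper's argument: Lemma~\ref{lemma 2.6} applied with $\mathcal{D}=\mathcal{B}^{\sharp}$ yields $\mathcal{B}\subset\mathcal{B}^{\prime}\subseteq\mathcal{B}^{\sharp}$ with atoms $(A(\mathcal{B})\setminus\{\alpha\})\cup\{\delta,\delta^{\prime}\}$, and immediacy forces $\mathcal{B}^{\prime}=\mathcal{B}^{\sharp}$ (the paper's extra ``conversely'' paragraph is not needed for the statement as given). Your normalization step is a worthwhile addition, because the paper invokes Lemma~\ref{lemma 2.6} without stating that $\mathcal{B}$ and $\mathcal{B}^{\sharp}$ must share their bounds, and without that hypothesis the claim does fail, e.g.\ for the immediate extensions built in Proposition~\ref{prop. 2.2}.
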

\begin{proof} Let $\mathcal{B\subset B}^{\sharp}$ be an immediate Boolean
extension. In view of Lemma~\ref{lemma 2.6}, there is a Boolean algebra $\mathcal{B}%
^{\prime}$ with $\mathcal{B\subset B}^{\prime}\mathcal{\subseteq B}^{\sharp}$
which satisfies the required properties. Then by definition $\mathcal{B}%
^{\prime}=\mathcal{B}^{\sharp}$. Conversely, let $\mathcal{D}$ be a Boolean
extension of $\mathcal{B}$ obtained by replacing an atom $\alpha$ in
$A(\mathcal{B})$ by two atoms $\alpha_{1},\alpha_{2}$ of $\mathcal{D}$ such
that $\alpha=\alpha_{1}\vee\alpha_{2}$. Since $\mathcal{B\subset D}$, and
$\mathcal{D}$ is finite, there must exist a Boolean algebra $\mathcal{B}%
^{\sharp}\subseteq\mathcal{D}$ such that $\mathcal{B\subset B}^{\sharp}$ is an
immediate Boolean extension of $\mathcal{B}$. Then there exist two atoms
$\delta,\delta^{\prime}$ of $\mathcal{B}^{\sharp}$ such that $A(\mathcal{B}%
^{\sharp})=\left(  A(\mathcal{B})\setminus\{\alpha\}\right)  \cup
\{\delta,\delta^{\prime}\}$. Since the number of atoms of $\mathcal{B}%
^{\sharp}$ and $\mathcal{D}$ is the same (i.e. $\mid A(\mathcal{B})\mid+1$),
they must have the same size, therefore, $\mathcal{D}$ coincides with
$\mathcal{B}^{\sharp}$.
\end{proof}

\section{Boolean sublattices of Part(U) and their hypergraphs}\label{sec3}

\medskip

In this section, we assign to any disjoint system of a finite partition lattice
a hypergraph induced by the blocks of these partitions. The Boolean
sublattices of Part$(U)$ containing $\triangle$ will be characterized by a
condition imposed on the cycles of the hypergraphs corresponding to the atoms
of these sublattices.

\medskip

\noindent\ Let $U\neq\emptyset$ be a finite universe and $D$ be a disjoint
system in Part$(U)\setminus\{\triangle\}$. We define a hypergraph
$\mathcal{H}\left(  D\right)  =(U,\mathcal{E})$ as follows: the
\emph{vertices} (or the \emph{points }or\emph{ nodes}) of $\mathcal{H}\left(
D\right)  $ are the elements of the set $U$, and its \emph{edge set}
$\mathcal{E}$ (called also its \emph{set of blocks}) consists of the
non-singleton blocks of all partitions $\pi\in D$. By definition
$\mathcal{H}\left(  D\right)  $ can not contain loops or multiple edges,
moreover, two different edges $A_{1},A_{2}\in\mathcal{E}$ can have at most one
common vertex $v\in A_{1}\cap A_{2}$. Such an element $v$ will be called the
\emph{connection point of }$A_{1}$\emph{\ and }$A_{2}$. Indeed, if
$A_{1},A_{2}$ belong to the same partition $\alpha\in D$, then $A_{1}\cap
A_{2}=\emptyset$; if $A_{1},A_{2}$ belong to different partitions $\alpha
_{1},\alpha_{2}\in D$, then $\alpha_{1}\wedge\alpha_{2}=\triangle$ implies
$\mid A_{1}\cap A_{2}\mid\leq1$. These properties mean that
$\mathcal{H}\left(  D\right)  $ is a \emph{simple linear hypergraph. }(see
e.g. Dorfling and Henning \cite{DorflingHenning2014}) Observe also, that two different partitions
$\pi_{1},\pi_{2}\in D$, $\pi_{1}\neq\pi_{2}$ can have only common blocks with
a single element.

\medskip

\begin{definition}\label{def 3.1} The \emph{incidence graph} of a hypergraph
$\mathcal{H}\left(  D\right)  $ is a bipartite graph with vertex set
$U\cup\mathcal{E}$ in which $x\in U$ is adjacent to $B\in\mathcal{E}$ if and only if
$x\in B$.

\noindent(i) A \emph{walk} $\mathcal{P}$ in the hypergraph $\mathcal{H}\left(
D\right)  $ is a walk in the incidence graph starting and ending at points in
$U$. A \emph{path} is a walk in which no points or edges are repeating, except
that the starting and ending point may coincide.

\noindent(ii) If these two points are the same, then the path $\mathcal{P}$ is
called a \emph{cycle} $\mathcal{C}$ in $\mathcal{H}\left(  D\right)  $. In
fact, this is a sequence $v_{1},B_{1},v_{2},B_{2},v_{3},...,v_{n},B_{n},v_{1}$
, where $v_{1}\in B_{1}\cap B_{n},$ $v_{i}\in B_{i-1}\cap B_{i}$, $2\leq i\leq
n,$($n\geq3$), and the points $v_{1},...,v_{n}\in U$ and all its edges
$B_{1},...,B_{n}\in\mathcal{E}$ are different.
\end{definition}

Notice that every cycle contains the same number of points and edges.
This number is called the \emph{length} of the cycle, and it has to be at
least three, because our hypergraph is simple and linear. A hypergraph is
called \emph{connected}, if any pair of its vertices $a\neq b$ are connected
by a path. A connected hypergraph without cycles is called a \emph{hypertree}
(see \cite{cichacz2013cordial}).

\medskip

\begin{remark}\label{rem. 3.2} Let $\mathcal{B}$ be a Boolean sublattice of
Part$(U)$ containing $\triangle$, and $A(\mathcal{B})$ its set of atoms. Then
it is easy to see that $\triangledown\in\mathcal{B}$ if and only if the
hypergraph $\mathcal{H}\left(  A(\mathcal{B})\right)  $ corresponding to the
disjoint system $A(\mathcal{B})$ is connected.

Indeed, $\triangledown\in\mathcal{B}$ implies $\bigvee A(\mathcal{B}%
)=\triangledown$. Hence, for any $a,b\in U$, $a\neq b$, $(a,b)\in\alpha_{1}%
\vee...\vee\alpha_{n}$, for some $\alpha_{1},...,\alpha_{n}\in A(\mathcal{B}%
)$, because any principal partition $\pi_{\{a,b\}}$ is a compact element of
Part$(U)$. This means that there exists a path $v_{0},A_{1},v_{1}%
,A_{2},...,v_{n-1},A_{n}v_{n}$ with $v_{0}=a$, $v_{n}=b$ in $\mathcal{H}%
\left(  A(\mathcal{B})\right)  $ formed by the blocks $A_{1},...,A_{n}$ of
some partitions from $\{\alpha_{1},...,\alpha_{n}\}$, in other words, the
hypergraph $\mathcal{H}\left(  A(\mathcal{B})\right)  $ is connected.

Conversely, if $\mathcal{H}\left(  A(\mathcal{B})\right)  $ is connected, then
every pair $a,b\in U$, $a\neq b$ is connected by a path $v_{0},A_{1}%
,v_{1},A_{2},...,v_{n-1},A_{n}v_{n}$ with $v_{0}=a$, $v_{n}=b$ in
$\mathcal{H}\left(  A(\mathcal{B})\right)  $, where $A_{1},...,A_{n}$ are
blocks of some partitions $\alpha_{1},...,\alpha_{n}\in A(\mathcal{B})$. Then
$(a,b)\in\alpha_{1}\vee...\vee\alpha_{n}\leq\bigvee A(\mathcal{B})$, and this
implies $\triangledown=\bigvee A(\mathcal{B})\in\mathcal{B}$.
\end{remark}

\medskip

\begin{proposition}\label{prop. 3.3} Let $\mathcal{B}$ be a Boolean sublattice
of Part$(U)$ with $\triangle\in\mathcal{B}$, and $A(\mathcal{B})$ its set of
atoms. Then in each cycle $\mathcal{C}:=v_{1},B_{1},v_{2},B_{2},v_{3}%
,...,v_{n},B_{n},v_{1}$ in $\mathcal{H}\left(  A(\mathcal{B})\right)  $, whose
edges $B_{1},...,B_{n}$ are (non-singleton) blocks of partitions
$\alpha_{1},...,\alpha_{k}\in A(\mathcal{B})$, each partition $\alpha
_{1},...,\alpha_{k}$ participates with at least two non-singleton blocks.
\end{proposition}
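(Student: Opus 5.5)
We argue by contradiction, generalizing the cycle argument from the proof of Proposition~\ref{prop. 1.4}(ii). Suppose that in the cycle $\mathcal{C}$ some atom, say $\alpha_{1}$, participates with exactly one non-singleton block, say $B_{1}$ (after relabelling the edges of the cycle). The other edges $B_{2},\dots,B_{n}$ of $\mathcal{C}$ are then blocks of partitions among $\alpha_{2},\dots,\alpha_{k}$, all different from $\alpha_{1}$; note $k\geq2$, since $n\geq3$ and $\alpha_1$ contributes only one edge.

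The key step is to show that the two connection points $v_{1}\in B_{n}\cap B_{1}$ and $v_{2}\in B_{1}\cap B_{2}$, which are distinct points of $B_{1}$, are related in the join $\beta:=\alpha_{2}\vee\dots\vee\alpha_{k}$. This follows from the path $v_{2},B_{2},v_{3},\dots,v_{n},B_{n},v_{1}$: consecutive points $v_{i},v_{i+1}$ lie in a common block $B_{i}$ of some $\alpha_{j}$ with $j\geq2$, so $(v_{i},v_{i+1})\in\alpha_{j}\leq\beta$, and by transitivity $(v_{2},v_{1})\in\beta$. The same reasoning as in Remark~\ref{rem. 3.2} applies: a join in Part$(U)$ is the transitive closure of the union of the equivalences.

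Consequently the atom $\pi_{\{v_{1},v_{2}\}}$ of Part$(U)$ satisfies $\pi_{\{v_{1},v_{2}\}}\leq\alpha_{1}$ (since $v_{1},v_{2}\in B_{1}$) and $\pi_{\{v_{1},v_{2}\}}\leq\beta$. Hence $\alpha_{1}\wedge\beta\neq\triangle$. On the other hand, since $\triangle\in\mathcal{B}$ and $\mathcal{B}$ is a finite Boolean lattice, $\triangle$ is its least element and its atom set $A(\mathcal{B})$ is independent in Part$(U)$ (by Lemma~\ref{lemma 2.5}, or directly from the Boolean structure). Moreover, meets and joins computed in $\mathcal{B}$ agree with those in Part$(U)$, as $\mathcal{B}$ is a sublattice. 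Applying (2) with $X=\{\alpha_{1}\}$ and $Y=\{\alpha_{2},\dots,\alpha_{k}\}$ gives $\alpha_{1}\wedge\beta=\bigvee\emptyset=\triangle$, which is a contradiction.

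The only delicate point is bookkeeping: $\beta$ must not involve $\alpha_{1}$. This is exactly where the hypothesis that $\alpha_{1}$ contributes a single block to $\mathcal{C}$ is used. If $\alpha_{1}$ owned two edges of $\mathcal{C}$, the path from $v_{2}$ back to $v_{1}$ would pass through a block of $\alpha_{1}$, and the argument would correctly fail. I also need to check that $v_{1}\neq v_{2}$, which holds by the definition of a cycle. There is no real obstacle beyond these checks.
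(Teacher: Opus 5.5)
Your proof is correct and is essentially the paper's argument. In both, the path through $B_2,\dots,B_n$ puts $v_1$ and $v_2$ in one block of $\alpha_2\vee\cdots\vee\alpha_k$, which contradicts $\alpha_1\wedge(\alpha_2\vee\cdots\vee\alpha_k)=\triangle$; the paper gets that equation from distributivity of $\mathcal{B}$, while you get it from independence of $A(\mathcal{B})$, and the two are interchangeable here.
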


\begin{proof} Assume\textit{\ }that\textit{\ }$\mathcal{C}%
:=v_{1},B_{1},v_{2},B_{2},v_{3},...,v_{n},B_{n},v_{1}$ is a cycle in
$\mathcal{H}\left(  A(\mathcal{B})\right)  $ such that that $B_{1}$ is a
(non-singleton) block of $\alpha_{1}\in A(\mathcal{B})$, and all the other
edges $B_{2},...,B_{n}$ are non-singleton blocks of some partitions
$\alpha_{2},...,\alpha_{k}\in A(\mathcal{B})$, different from $\alpha_{1}$.
Then the set $B_{2}\cup...\cup B_{n}$ has to be included in a block $A$ of the
partition $\alpha_{2}\vee...\vee\alpha_{k}$. Since $v_{2},v_{1}\in B_{2}%
\cup...\cup B_{n}\subseteq A$ and $v_{1},v_{2}\in B_{1}$, $v_{1}\neq v_{2}$,
we obtain $\mid B_{1}\cap A\mid\geq2$. However, this is a contradiction,
because by distributivity of $\mathcal{B}$, $\alpha_{1}\wedge\alpha
_{2}=\triangle,...,\alpha_{1}\wedge\alpha_{k}=\triangle$ imply $\alpha
_{1}\wedge(\alpha_{2}\vee...\vee\alpha_{k})=\triangle$, and this means that
any block $B_{i}$ of $\alpha_{1}$ and the block $A$ of $\alpha_{2}\vee
...\vee\alpha_{k}$ can have at most one common element.
\end{proof}

\medskip

\begin{corollary}\label{cor. 3.4} Let $\mathcal{B}$ be a Boolean sublattice of
Part$(U)$ with $\triangle\in\mathcal{B}$.

\noindent(i) If each atom of $\mathcal{B}$ is a principal partition, then the
hypergraph $\mathcal{H}\left(  A(\mathcal{B})\right)  $ does not contain a
cycle. If in addition $\triangledown\in\mathcal{B}$, then $\mathcal{H}\left(
A(\mathcal{B})\right)  $ is a hypertree.

\noindent(ii ) If a cycle $\mathcal{C}$ in $\mathcal{H}\left(  A(\mathcal{B}%
)\right)  $ contains at most three edges none of which is a non-singleton
block of an atom that has a non-singleton block among the other edges of
$\mathcal{C}$, then these edges belong to the same atom of $\mathcal{B}$.
\end{corollary}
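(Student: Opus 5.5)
Both parts follow from Proposition~\ref{prop. 3.3}, which says that in every cycle of $\mathcal{H}\left(A(\mathcal{B})\right)$ each participating atom contributes at least two non-singleton blocks. In each case we assume the conclusion fails and produce an atom that occurs in the cycle with only one block.

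For (i), suppose every atom of $\mathcal{B}$ is a principal partition $\pi_{A}$. Then each atom has exactly one non-singleton block. Suppose $\mathcal{H}\left(A(\mathcal{B})\right)$ contained a cycle $\mathcal{C}=v_{1},B_{1},\dots,v_{n},B_{n},v_{1}$. Its edge $B_{1}$ is the unique non-singleton block of some atom $\alpha_{1}$. Since the edges of a cycle are pairwise distinct, no other edge of $\mathcal{C}$ can be a block of $\alpha_{1}$. So $\alpha_{1}$ takes part in $\mathcal{C}$ with exactly one non-singleton block, contradicting Proposition~\ref{prop. 3.3}. Hence $\mathcal{H}\left(A(\mathcal{B})\right)$ is acyclic. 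If in addition $\triangledown\in\mathcal{B}$, then Remark~\ref{rem. 3.2} gives that $\mathcal{H}\left(A(\mathcal{B})\right)$ is connected. A connected acyclic hypergraph is by definition a hypertree.

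For (ii), take a cycle $\mathcal{C}$ with at most three edges. Because the hypergraph is simple and linear, every cycle has length at least three, so $\mathcal{C}$ has exactly three edges $B_{1},B_{2},B_{3}$. By hypothesis, if one of these edges is a block of an atom $\alpha$, then no other edge of $\mathcal{C}$ is a non-singleton block of an atom having a non-singleton block among the remaining edges. The intended reading is that the edges are not distributed with repetitions across different atoms.

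Suppose the $B_{i}$ do not all belong to the same atom. With three edges and at least two atoms, some atom $\alpha$ contributes exactly one edge of $\mathcal{C}$. This holds in both possible splits, $1+1+1$ and $2+1$. That again contradicts Proposition~\ref{prop. 3.3}, so all three edges are blocks of a single atom.

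The only delicate step is interpreting the hypothesis in (ii) and checking the pigeonhole count of three edges against at least two atoms. Read this way, the hypothesis is in fact not needed: the counting argument alone already yields the conclusion. So the main point is simply to note that a $3$-cycle cannot have every participating atom contributing at least two blocks unless only one atom is involved.
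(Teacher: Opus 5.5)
Your part (i) is correct and is exactly the paper's argument. A principal atom has only one non-singleton block, so it would occur only once in any cycle, contradicting Proposition~\ref{prop. 3.3}. Remark~\ref{rem. 3.2} then supplies connectedness.

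Part (ii) rests on a misreading of the statement. The fact that your reading makes the hypothesis superfluous should have been the warning sign.

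In the statement, $\mathcal{C}$ is a cycle of arbitrary length, and the ``at most three edges'' form a distinguished set $S$ of its edges. ``The other edges of $\mathcal{C}$'' are the edges outside $S$. So the hypothesis says that no atom owning an edge of $S$ has a non-singleton block in $\mathcal{C}\setminus S$. The conclusion is that all edges of $S$ are blocks of a single atom.

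This is how the paper proves it (``suppose that there are $3$ edges in $\mathcal{C}$ with such a property''). It is also how the corollary is used in Lemma~\ref{lemma 4.10}. There $\mathcal{C}$ has six edges $F_i,E_i,C,E_{i+1},F_{i+1},A$. The distinguished edges are $A,E_i,E_{i+1}$, and the other three are blocks of $\beta$.

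Your argument covers only the degenerate case where $S$ is the whole edge set of a $3$-cycle. For a longer cycle, your pigeonhole step yields an atom $\alpha$ owning exactly one edge of $S$. By itself this gives no contradiction, because $\alpha$ could have further blocks elsewhere in $\mathcal{C}$.

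The missing step is exactly where the hypothesis is used. It guarantees that $\alpha$ has no block in $\mathcal{C}\setminus S$. Hence $\alpha$ has exactly one non-singleton block in all of $\mathcal{C}$, which contradicts Proposition~\ref{prop. 3.3}.

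With that line added, your counting argument becomes the paper's proof: with $|S|\le 3$ and at least two owning atoms, some atom owns exactly one edge of $S$.
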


\begin{proof} (i) If $\mathcal{C}$ would be cycle $\mathcal{C}$ in
$\mathcal{H}\left(  A(\mathcal{B})\right)  $, then in view of Proposition 3.3,
it would contain at least two non-singleton blocks of some atom $\pi_{A}$ of
it, which is impossible, because $\pi_{A}$ is a principal partition. As
$\mathcal{H}\left(  A(\mathcal{B})\right)  $ is a hypergraph without cycles,
by Remark 3.2 it is a hypertree whenever $\triangledown\in\mathcal{B}$.

\noindent(ii) By Proposition 3.3 this is clear if there are two edges with
this property in $\mathcal{C}$. Suppose that there are $3$ edges in
$\mathcal{C}$ with such a property. If they would be shared among more
elements of $A(\mathcal{B})$, then by our assumption, one of them would be a
block of an $\alpha\in A(\mathcal{B})$ that has no other non-singleton blocks
included in $\mathcal{C}$. However, this case is excluded by Proposition 3.3.
\end{proof}

\medskip

Let $\mathcal{D}\subseteq\ $Part$(U)\setminus\{\triangle\}$ be a disjoint
system, and consider now the corresponding hypergraph $\mathcal{H}\left(
\mathcal{D}\right)  =(U,\mathcal{E)}$.

\medskip

\noindent\textbf{Circuit} \textbf{Condition.} We say the hypergraph
$\mathcal{H}\left(  D\right)  $ \emph{satisfies the circuit condition}, if for
any subsets $X,Y\subseteq\mathcal{D}$ and $Z=X\cap Y$ and to any pair of paths
$\mathcal{P}_{1}:=$ $u,A_{1},v_{1},A_{2},...,v_{n-1},A_{n},v$ and
$\mathcal{P}_{2}:=$ $u,B_{1},w_{1},B_{2},...,w_{m-1},B_{m}v$ in $\mathcal{H}%
\left(  D\right)  $ with common endpoints $u$ and $v$ formed by the blocks
$A_{1},...,A_{n}$ of some partitions from $X$ and by the blocks $B_{1}%
,...,B_{m}$ of some partitions from $Y$, corresponds a path $u,C_{1}%
,u_{1},C_{2}...,u_{k-1},C_{k},v$ with $u\in C_{1}$ and $v\in C_{k}$, such that
$C_{1},...,C_{k}$ are blocks of some partitions from $Z$.

\bigskip

\begin{theorem}\label{Thm. 3.5} Let $U\neq\emptyset$ be a finite universe and
$\mathcal{D}\subseteq\ $Part$(U)\setminus\{\triangle\}$ a disjoint system.
Then $\mathcal{L}=\mathcal{D}_{\vee}\cup\{\triangle\}$ is a Boolean sublattice
of Part$(U)$ if and only if the hypergraph $\mathcal{H}\left(  \mathcal{D}%
\right)  $ satisfies the circuit condition.
\end{theorem}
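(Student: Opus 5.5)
We reduce the statement to Lemma~\ref{lemma 2.5}: since $\mathcal{D}$ is a finite disjoint system in the finite lattice Part$(U)$, $\mathcal{L}=\mathcal{D}_{\vee}\cup\{\triangle\}$ is a Boolean sublattice if and only if $\mathcal{D}$ is independent. Independence means that relation (2) holds for all $X,Y\subseteq\mathcal{D}$:
\[
\left(\bigvee X\right)\wedge\left(\bigvee Y\right)=\bigvee(X\cap Y).
\]
So the whole task is to translate (2) into the language of the hypergraph $\mathcal{H}(\mathcal{D})$.

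The key translation step is the description of joins in Part$(U)$ already used in Remark~\ref{rem. 3.2}. For $X\subseteq\mathcal{D}$ and distinct $u,v\in U$, we have $(u,v)\in\bigvee X$ if and only if there is a path from $u$ to $v$ in $\mathcal{H}(\mathcal{D})$ whose edges are non-singleton blocks of partitions in $X$.
\begin{itemize}
\item Since the join is the transitive closure of the union of the equivalences, $(u,v)\in\bigvee X$ gives a walk through such blocks.
\item A shortest such walk is a path: it repeats no points, and it repeats no edges, because an edge reused could be shortcut.
\end{itemize}
For $X=\emptyset$ we interpret $\bigvee\emptyset=\triangle$, consistent with the empty-path convention. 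The meet of two partitions is their intersection as equivalence relations. Hence $(u,v)\in(\bigvee X)\wedge(\bigvee Y)$ if and only if there are two $u$–$v$ paths $\mathcal{P}_1$ and $\mathcal{P}_2$, built from blocks of partitions in $X$ and in $Y$ respectively.

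\emph{Sufficiency.} Assume the circuit condition and fix $X,Y$ with $Z=X\cap Y$. The inequality $\bigvee Z\leq(\bigvee X)\wedge(\bigvee Y)$ holds in any lattice. For the reverse, take a pair $(u,v)$ with $u\neq v$ in the meet. This yields paths $\mathcal{P}_1,\mathcal{P}_2$ as above. The circuit condition then gives a $u$–$v$ path with blocks from $Z$, so $(u,v)\in\bigvee Z$. Thus (2) holds, $\mathcal{D}$ is independent, and Lemma~\ref{lemma 2.5} ((e)$\Rightarrow$(a)) shows that $\mathcal{L}$ is Boolean.

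\emph{Necessity.} Assume $\mathcal{L}$ is Boolean. By Lemma~\ref{lemma 2.5} ((a)$\Rightarrow$(e)), $\mathcal{D}$ is independent, so (2) holds. Given $X,Y$ and paths $\mathcal{P}_1,\mathcal{P}_2$ from $u$ to $v$ as in the circuit condition, we get $(u,v)\in(\bigvee X)\wedge(\bigvee Y)=\bigvee Z$. The join characterization then produces the required path with blocks from partitions in $Z$. If $u=v$ the required path is trivial (or the condition is vacuous), so only distinct endpoints matter.

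The main obstacle is minor: making the join-to-path correspondence rigorous, in particular extracting a genuine path (no repeated points or edges) from a walk in the incidence graph. A second point is the degenerate case $Z=\emptyset$. There the condition forces $\mathcal{P}_1,\mathcal{P}_2$ with disjoint $X,Y$ to be impossible unless $u=v$, which matches $\bigvee\emptyset=\triangle$. The rest is direct bookkeeping via Lemma~\ref{lemma 2.5}.
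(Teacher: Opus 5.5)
Your proposal is correct and follows the paper's proof. Both directions reduce, via Lemma~\ref{lemma 2.5}, to the independence of $\mathcal{D}$, and both read joins in Part$(U)$ as connecting paths in $\mathcal{H}(\mathcal{D})$ and meets as intersections of equivalence relations. Your explicit shortest-walk argument and your handling of the cases $u=v$ and $Z=\emptyset$ are slightly more careful than the paper, which tacitly assumes $u\neq v$.
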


\begin{proof} Let $\mathcal{L}=\mathcal{D}_{\vee}\cup\{\triangle\}$
be a Boolean lattice, $X=\{\alpha_{1},...,\alpha_{p}\}\subseteq\mathcal{D}$,
$Y=\{\beta_{1},...,\beta_{q}\}\subseteq\mathcal{D}$, $Z=X\cap Y$, and
$\mathcal{P}_{1}:=$ $u,A_{1},v_{1},A_{2},...,v_{n-1},A_{n},v$, $\mathcal{P}%
_{2}:=$ $u,B_{1},w_{1},B_{2},...,w_{m-1},B_{m}v$ two paths in $\mathcal{H}%
\left(  D\right)  $ with common endpoints $u$ and $v$, defined as in the
circuit condition.

Then the set $A_{1}\cup...\cup A_{n}$ is included in a block $E$ of the
partition $\alpha_{1}\vee...\vee\alpha_{p}$ and $B_{1}\cup...\cup B_{m}$ is
included in a block $F$ of $\beta_{1}\vee...\vee\beta_{q}$. Now, $u,v\in E\cap
F$ and $E\cap F$ is a block of $\left(  \alpha_{1}\vee...\vee\alpha
_{p}\right)  $ $\wedge$ $\left(  \beta_{1}\vee...\vee\beta_{q}\right)  $. Let
$Z:=\{\gamma_{1},...,\gamma_{r}\}$. As the set of the atoms of $\mathcal{L}$
is independent, $\{\alpha_{1},...,\alpha_{p}\}\cap\{\beta_{1},...,\beta
_{q}\}=\{\gamma_{1},...,\gamma_{r}\}$ yields $\left(  \alpha_{1}\vee
...\vee\alpha_{p}\right)  \wedge\left(  \beta_{1}\vee...\vee\beta_{q}\right)
=\gamma_{1}\vee...\vee\gamma_{r}$. Hence $(u,v)\in\gamma_{1}\vee...\vee
\gamma_{r}$. Therefore, there are different points $z_{0},z_{1},...,z_{k}\in
U$ with $u=z_{0}$, $v=z_{k}$, such that for each $1\leq i\leq k$ we have
$(z_{i-1},z_{i})\in\gamma_{j}$, for some $j\in\{1,...,r\}$. Then $z_{0}%
,z_{1}\in C_{1},...,z_{k-1},z_{k}\in C_{k}$, where $C_{1},...,C_{k}$ are
non-singleton blocks of some partitions in $Z=\{\gamma_{1},...,\gamma_{r}\}$.
Clearly, w.l.o.g. we may suppose that all these blocks are different. As
$u,C_{1},z_{1},C_{2},...,z_{k-1},C_{k},v$ is a path in $\mathcal{H}\left(
\mathcal{D}\right)  $, the circuit condition holds.

Conversely, suppose that $\mathcal{H}\mathcal{(D})$ satisfies the circuit
condition. In view of Lemma 2.5, to prove that $\mathcal{L}=D_{\vee}%
\cup\{\triangle\}$ is a Boolean lattice, it is enough to show that
$\mathcal{D}$ is independent, i.e. $\{\alpha_{1},...,\alpha_{p}\}\cap
\{\beta_{1},...,\beta_{q}\}=\{\gamma_{1},...,\gamma_{r}\}$ implies $\left(
\alpha_{1}\vee...\vee\alpha_{p}\right)  \wedge\left(  \beta_{1}\vee
...\vee\beta_{q}\right)  =\gamma_{1}\vee...\vee\gamma_{r}$, for any
$X=\{\alpha_{1},...,\alpha_{p}\}\subseteq\mathcal{D}$ and $Y=\{\beta
_{1},...,\beta_{q}\}\subseteq\mathcal{D}$. Clearly, $\gamma_{1}\vee
...\vee\gamma_{r}\leq\left(  \alpha_{1}\vee...\vee\alpha_{p}\right)
\wedge\left(  \beta_{1}\vee...\vee\beta_{q}\right)  $. In order to prove the
equality, choose any pair $(u,v)\in\left(  \alpha_{1}\vee...\vee\alpha
_{p}\right)  \wedge\left(  \beta_{1}\vee...\vee\beta_{q}\right)  $, $u\neq v$.
Then there are some elements $v_{0},...,v_{n},w_{0},...,w_{m}\in U$ with
$v_{0}=w_{0}=u$, and $v_{n}=w_{m}=v$ and such that $(v_{i-1},v_{i})\in
\alpha_{i}\in\{\alpha_{1},...,\alpha_{p}\}$, for each $1\leq i\leq n$, and
$(w_{k-1},w_{k})\in\beta_{k}\in\{\beta_{1},...,\beta_{q}\}$, for $1\leq k\leq
m$. Hence, there exist two paths $\mathcal{P}_{1}:=$ $u,A_{1},v_{1}%
,A_{2},...,v_{n-1},A_{n},v$ and $\mathcal{P}_{2}:=$ $u,B_{1},w_{1}%
,B_{2},...,w_{m-1},B_{m}v$ with common endpoints $u$ and $v$ as in the circuit
condition. Then there is a path $u,C_{1},u_{1},C_{2}...,u_{k-1},C_{k},v$ in
$\mathcal{H}\left(  D\right)  $ with $u\in C_{1}$ and $v\in C_{k}$ formed from
blocks $C_{1},...,C_{k}$ of some partitions from $X\cap Y=\{\gamma
_{1},...,\gamma_{r}\}$. Thus $(u,v)\in\gamma_{1}\vee...\vee\gamma_{r}$, and
hence $\left(  \alpha_{1}\vee...\vee\alpha_{p}\right)  \wedge\left(  \beta
_{1}\vee...\vee\beta_{q}\right)  =\gamma_{1}\vee...\vee\gamma_{r}$
holds.
\end{proof}

\medskip

\begin{remark}\label{rem. 3.6} Note that when we examine the fulfillment of the
circuit condition in $\mathcal{H}\left(  D\right)  $, it is enough to examine
such pairs $\mathcal{P}_{1}:=$ $u,A_{1},v_{1},A_{2},...,v_{n-1},A_{n},v$ and
$\mathcal{P}_{2}:=$ $u,B_{1},w_{1},B_{2},...,w_{m-1},B_{m}v$ of paths that
form cycles, i.e. have no common points except $u$ and $v$.

Indeed, let $A_{1},...,A_{n}$ be blocks of some partitions in $X\subseteq D$
and $B_{1},...,B_{m}$ blocks of partitions in $Y\subseteq D$. If
$\mathcal{P}_{1}$ and $\mathcal{P}_{2}$ do not form a cycle, then their
consecutive common points determine either some cycles, or some common subpaths
of them. As all edges of the common subpaths are blocks of some partitions
from $X\cap Y$, it suffices to check the fulfillment of the circuit condition for
these cycles.
\end{remark}

\medskip

\begin{theorem}\label{Thm. 3.7} Let $U\neq\emptyset$ be a finite set, and
$\mathcal{B}$ a Boolean sublattice of Part$(U)$ containing $\triangle$. If
$\alpha$ is an atom in $\mathcal{B}$, then the sublattice $\mathcal{L}$ of
Part$(U)$ generated by $\left(  A(\mathcal{B})\mathcal{\setminus\{\alpha
\}}\right)  \mathcal{\cup\{\pi}_{A}\mid A$ is a block of $\alpha\}$ is a
Boolean lattice if and only if the hypergraph $\mathcal{H}\left(
A(\mathcal{B})\right)  $ does not contain cycles including a block of $\alpha$.
\end{theorem}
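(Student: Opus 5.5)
Set $\mathcal{D}':=\left(A(\mathcal{B})\setminus\{\alpha\}\right)\cup\{\pi_{A}\mid A \text{ is a non-singleton block of } \alpha\}$. Singleton blocks give $\pi_A=\triangle$, which can be discarded. The plan is to reduce both directions to Theorem~\ref{Thm. 3.5} and Proposition~\ref{prop. 3.3}, applied to the disjoint system $\mathcal{D}'$.

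First I check that $\mathcal{D}'$ is a disjoint system in Part$(U)\setminus\{\triangle\}$.
\begin{itemize}
\item For distinct non-singleton blocks $A,A'$ of $\alpha$, the blocks are disjoint, so $\pi_{A}\wedge\pi_{A'}=\triangle$.
\item For $\beta\in A(\mathcal{B})\setminus\{\alpha\}$ we have $\pi_{A}\le\alpha$, so $\pi_{A}\wedge\beta\le\alpha\wedge\beta=\triangle$.
\item Two atoms of $\mathcal{B}$ other than $\alpha$ meet in $\triangle$ because $\mathcal{B}$ is Boolean.
\end{itemize}
Moreover, the edge set of $\mathcal{H}(\mathcal{D}')$ is exactly that of $\mathcal{H}(A(\mathcal{B}))$: the non-singleton blocks of $\alpha$ are simply redistributed among the $\pi_{A}$.

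Next, by Lemma~\ref{lemma 2.5}, the sublattice $\mathcal{L}=\langle\mathcal{D}'\rangle$ is Boolean iff $\mathcal{D}'$ is independent. In that case $\mathcal{L}=\mathcal{D}'_{\vee}\cup\{\triangle\}$ and $A(\mathcal{L})=\mathcal{D}'$. Conversely, if $\mathcal{D}'_{\vee}\cup\{\triangle\}$ is a Boolean sublattice, it is closed under meets and joins and contains $\mathcal{D}'$, so it equals $\langle\mathcal{D}'\rangle$. Hence, by Theorem~\ref{Thm. 3.5}, $\mathcal{L}$ is Boolean iff $\mathcal{H}(\mathcal{D}')$ satisfies the circuit condition.

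($\Rightarrow$) Suppose $\mathcal{L}$ is Boolean. Then $\triangle\in\mathcal{L}$ and $A(\mathcal{L})=\mathcal{D}'$. Take any cycle of $\mathcal{H}(\mathcal{D}')=\mathcal{H}(A(\mathcal{B}))$ that contains a block $A$ of $\alpha$. By Proposition~\ref{prop. 3.3}, the atom $\pi_{A}$ of $\mathcal{L}$ must contribute at least two non-singleton blocks to that cycle. But $\pi_{A}$ is a principal partition with only one non-singleton block, which is a contradiction. So no cycle passes through a block of $\alpha$.

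($\Leftarrow$) Assume no cycle of $\mathcal{H}(A(\mathcal{B}))$ contains a block of $\alpha$. I verify the circuit condition for $\mathcal{H}(\mathcal{D}')$. By Remark~\ref{rem. 3.6}, it suffices to treat pairs of paths $\mathcal{P}_{1},\mathcal{P}_{2}$ that together form a cycle; the trivial case $\mathcal{P}_{1}=\mathcal{P}_{2}$ is immediate. Here $\mathcal{P}_{1}$ uses blocks of partitions in $X'\subseteq\mathcal{D}'$ and $\mathcal{P}_{2}$ uses blocks of partitions in $Y'\subseteq\mathcal{D}'$.

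Since the cycle avoids blocks of $\alpha$, every edge in it is a block of some partition in $A(\mathcal{B})\setminus\{\alpha\}$. So I can replace $X',Y'$ by
\[
X:=X'\cap A(\mathcal{B})\setminus\{\alpha\},\qquad Y:=Y'\cap A(\mathcal{B})\setminus\{\alpha\},
\]
and $\mathcal{P}_{1},\mathcal{P}_{2}$ are still formed from blocks of $X$ and $Y$ respectively. Now $\mathcal{B}$ is Boolean with atom set $A(\mathcal{B})$ and $\mathcal{B}=A(\mathcal{B})_{\vee}\cup\{\triangle\}$. Hence the "only if" direction of Theorem~\ref{Thm. 3.5} shows that $\mathcal{H}(A(\mathcal{B}))$ satisfies the circuit condition. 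Applied to $X,Y$, it yields a $u$--$v$ path built from blocks of partitions in $X\cap Y\subseteq X'\cap Y'$. This is the path required for $X',Y'$ in $\mathcal{H}(\mathcal{D}')$.

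The delicate step is this last reduction. The trick is to shrink $X',Y'$ to subsets of $A(\mathcal{B})\setminus\{\alpha\}$ before invoking the circuit condition in $\mathcal{H}(A(\mathcal{B}))$. If instead one passed to $X'\cup\{\alpha\}$, the connecting path produced there could use a block of $\alpha$ whose principal partition lies in only one of $X'$, $Y'$, and the argument would break. The cycle-freeness hypothesis is exactly what makes this shrinking legitimate. The reduction to cycles in Remark~\ref{rem. 3.6} should also be checked to respect the partition into $X'$- and $Y'$-blocks.
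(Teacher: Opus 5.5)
Your proposal is correct and follows the paper's own proof essentially step for step. For ($\Rightarrow$) you use Lemma~2.5 to get $\mathcal{L}=\mathcal{D}'_{\vee}\cup\{\triangle\}$ with $A(\mathcal{L})=\mathcal{D}'$, and then Proposition~3.3 against the single-block principal atoms $\pi_A$; for ($\Leftarrow$) you use Remark~3.6 together with the circuit condition of $\mathcal{H}(A(\mathcal{B}))$ from Theorem~3.5. Your explicit shrinking of $X',Y'$ to subsets of $A(\mathcal{B})\setminus\{\alpha\}$ and your discarding of singleton blocks only make precise what the paper states loosely as ``$X,Y\subseteq A(\mathcal{B})\setminus\{\alpha\}$''.
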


\begin{proof} Observe that $\mathcal{D}=\left(  A(\mathcal{B}%
)\mathcal{\setminus\{\alpha\}}\right)  \mathcal{\cup\{\pi}_{A}\mid A$ is a
block of $\alpha\}$ is a disjoint system in Part$(U)$ and $\triangle
\in\mathcal{B\subseteq\langle}\mathcal{D}\mathcal{\rangle}$.

First, assume that $\mathcal{L}:=\mathcal{\langle}\mathcal{D}\mathcal{\rangle
}$ is a Boolean lattice. Since $\mathcal{D}_{\vee}\cup\{\triangle
\}\subseteq\mathcal{L}$ and $\mathcal{L}$ is distributive, Lemma~\ref{lemma 2.5} yields
that $\mathcal{D}_{\vee}\cup\{0\}$ is a sublattice of Part$(U)$. As
$\mathcal{D\subset D}_{\vee}\cup\{0\}$, we get $\mathcal{L}=\mathcal{\langle
}\mathcal{D}\mathcal{\rangle=\ }\mathcal{D}_{\vee}\cup\{0\}$, and
$A(\mathcal{L})=\mathcal{D}$. Assume that $\mathcal{C}$ is a cycle in
$\mathcal{H}\left(  A(\mathcal{B})\right)  $ containing some (non-singleton)
blocks $A_{1},...,A_{k}$ of $\alpha$. As all these edges are blocks of some
partitions from $\mathcal{D}=A(\mathcal{L})$, $\mathcal{C}$ is also a cycle in
the hypergraph $\mathcal{H}\left(  A(\mathcal{L})\right)  $. Since in
$\mathcal{C}$ any principal partition $\mathcal{\pi}_{A_{i}}\in A(\mathcal{L})$,
$1\leq i\leq k$ can be present with one non-singleton block only, this is a
contradiction to Proposition~\ref{prop. 3.3}.

\medskip

Conversely, assume that $\mathcal{H}\left(  A(\mathcal{B})\right)  $ has no
cycles including blocks of $\alpha$. As $\mathcal{D}$ is a disjoint set, by
Theorem~\ref{Thm. 3.5}, $\mathcal{L}=\mathcal{D}_{\vee}\cup\{\triangle\}$ is a Boolean
sublattice of Part$(U)$ whenever $\mathcal{H}\left(  \mathcal{D}\right)  $
satisfies the circuit condition. Now, let $u,A_{1},v_{1},A_{2},...,v_{n-1}%
,A_{n},v$ and $u,B_{1},w_{1},B_{2},...,w_{m-1},B_{m}v$ be two paths in
$\mathcal{H}\left(  D\right)  $ with common endpoints $u$ and $v$ such that
$A_{1},...,A_{n}$ and $B_{1},...,B_{m}$ are blocks of some partitions from
$X\subseteq\mathcal{D}$, respectively from $Y\subseteq\mathcal{D}$. By Remark~\ref{rem. 3.6}, it is enough to consider the case when $\mathcal{C}:=$ $u,A_{1}%
,...,v_{n-1},A_{n},v,B_{m},w_{m-1},...,B_{1},u$ is a cycle in $\mathcal{H}%
\left(  D\right)  $. As by assumption no block of $\alpha$ is included in
$\mathcal{C}$, we get $X,Y\subseteq A(\mathcal{B})\mathcal{\setminus
\{\alpha\}}$. Since $\mathcal{B}=A(\mathcal{B})_{\vee}\cup\{\triangle\}$ is a
Boolean lattice, by Theorem~\ref{Thm. 3.5}, $\mathcal{H}\left(  A(\mathcal{B})\right)  $
satisfies the circuit condition. Hence, there is a path $u,C_{1},u_{1}%
,...,u_{k-1},C_{k},v$ with $u\in C_{1}$, $v\in C_{k}$ in $\mathcal{H}\left(
A(\mathcal{B})\right)  $ such that $C_{1},...,C_{k}$ are blocks of some
partitions from $X\cap Y$. Since $X\cap Y\subseteq A(\mathcal{B}%
)\mathcal{\setminus\{\alpha\}\subseteq D}$, this means that $\mathcal{H}%
\left(  D\right)  $ also satisfies the circuit condition. Thus $\mathcal{L}%
=D_{\vee}\cup\{\triangle\}$ is a Boolean lattice.
\end{proof}

\medskip

\begin{proposition}\label{prop. 3.8} Let $U\neq\emptyset$ be a finite set and
$\mathcal{B}$ a Boolean sublattice of Part$(U)$ containing $\triangle$ and
$\bigtriangledown$.

\noindent(i) $\mathcal{H}\left(  A(\mathcal{B})\right)  $ is a hypertree if
and only if $\mathcal{B}$ can be extended to a Boolean sublattice
$\mathcal{B}^{\ast}$ of Part$(U)$ such that the atoms of $\mathcal{B}^{\ast}$
are all principal partitions $\mathcal{\pi}_{A}$, where $A$ is a block of an
atom $\alpha\in A(\mathcal{B})$. \ Moreover, $\mathcal{B}^{\ast}$ determines
on $U$ the same hypergraph as $\mathcal{B}$, i.e. $\mathcal{H}\left(
A(\mathcal{B}^{\ast})\right)  =\mathcal{H}\left(  A(\mathcal{B})\right)  $.

\noindent(ii) If $\mathcal{H}\left(  A(\mathcal{B})\right)  $ is a hypertree,
and $\mathcal{B}^{\ast}$ is the extension of $\mathcal{B}$ as in (i), then
$\mathcal{B}^{\ast}=\mathcal{D}(U,\mathcal{I})$, where $\mathcal{I}$ is a
semi-interval system formed by the blocks of the partitions $\pi\in
\mathcal{B}^{\ast}$and $\emptyset$.
\end{proposition}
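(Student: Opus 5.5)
For (i), the plan is to pass from $\mathcal{B}$ to the principal-partition system
\[
\mathcal{D}^{\ast}=\{\pi_{A}\mid A \text{ is a non-singleton block of some } \alpha\in A(\mathcal{B})\}.
\]
Distinct blocks of one atom are disjoint, and blocks of different atoms meet in at most one point (linearity). Hence $\mathcal{D}^{\ast}$ is a disjoint system. Its hypergraph has exactly the same edges as $\mathcal{H}(A(\mathcal{B}))$, so $\mathcal{H}(\mathcal{D}^{\ast})=\mathcal{H}(A(\mathcal{B}))$.

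\textbf{Direction ($\Leftarrow$).} Suppose $\mathcal{B}^{\ast}$ exists. Since its atoms are all principal partitions, Corollary~\ref{cor. 3.4}(i) shows that $\mathcal{H}(A(\mathcal{B}^{\ast}))$ has no cycles. Because $\triangledown\in\mathcal{B}\subseteq\mathcal{B}^{\ast}$, the same corollary gives that it is a hypertree. As this hypergraph equals $\mathcal{H}(A(\mathcal{B}))$, we are done.

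\textbf{Direction ($\Rightarrow$).} Suppose $\mathcal{H}(A(\mathcal{B}))$ is a hypertree. I would show that $\mathcal{H}(\mathcal{D}^{\ast})$ satisfies the circuit condition. By Remark~\ref{rem. 3.6} it suffices to treat pairs of paths forming a cycle, and there are none. So by Theorem~\ref{Thm. 3.5}, $\mathcal{B}^{\ast}:=\mathcal{D}^{\ast}_{\vee}\cup\{\triangle\}$ is a Boolean sublattice.

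Alternatively, one can apply Theorem~\ref{Thm. 3.7} repeatedly, splitting one atom at a time; each step preserves the hypergraph and hence acyclicity. I prefer the direct route via Theorem~\ref{Thm. 3.5}.

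It remains to check that $\mathcal{B}\subseteq\mathcal{B}^{\ast}$. Each $\alpha\in A(\mathcal{B})$ equals $\bigvee\{\pi_{A}\mid A$ a block of $\alpha\}$ by (1), so $\alpha\in\mathcal{B}^{\ast}$. Every element of $\mathcal{B}$ is a join of atoms, and $\triangle\in\mathcal{B}^{\ast}$, so $\mathcal{B}\subseteq\mathcal{B}^{\ast}$. Finally, the atoms of $\mathcal{B}^{\ast}$ are exactly the elements of $\mathcal{D}^{\ast}$, as in the proof (a)$\Rightarrow$(b) of Lemma~\ref{lemma 2.5}. This also yields the ``moreover'' assertion.

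\textbf{Part (ii).} Let $\mathcal{I}$ consist of $\emptyset$ and all blocks of partitions in $\mathcal{B}^{\ast}$. Each such block is a union of edges of the hypertree that is connected in the hypergraph, i.e.\ a vertex set of a sub-hypertree (singletons included).

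\emph{Closure system.} Intersections of two such sets are again connected, by acyclicity: two points of the intersection are joined by a unique path, which must lie in both sets. Also $U$ is in $\mathcal{I}$ since $\triangledown\in\mathcal{B}^{\ast}$. Thus $\mathcal{I}$ is a closure system, and since $U$ is finite it is algebraic.

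\emph{Conditions.} (I$_0$) holds because $\triangle\in\mathcal{B}^{\ast}$ contributes all singletons. For (I$_1$), if $A,B\in\mathcal{I}$ meet, then $A\cup B$ is connected. It is a block of $\pi_{A}\vee\pi_{B}$, and this join lies in $\mathcal{B}^{\ast}$ because $\pi_{A}$ and $\pi_{B}$ are joins of atoms of $\mathcal{B}^{\ast}$.

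\emph{Equality of lattices.} Clearly $\mathcal{B}^{\ast}\subseteq\mathcal{D}(U,\mathcal{I})$. For the converse, any decomposition into members of $\mathcal{I}$ is the join, as in (1), of the principal partitions $\pi_{A_i}$. Each $\pi_{A_i}$ is a join of atoms $\pi_{E}$ with $E\subseteq A_i$ an edge, using connectivity of $A_i$. Hence the decomposition lies in $\mathcal{B}^{\ast}$.

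\textbf{Main obstacle.} The step I expect to be hardest is justifying that every block of an element of $\mathcal{B}^{\ast}$ is a connected union of edges, and the converse that every connected edge union is such a block. The first follows since joins of the $\pi_E$ glue edges along shared points. The second follows since, for a connected edge set $S$, the join of $\{\pi_E\mid E\in S\}$ has $\bigcup S$ as a block. Both need care with the singleton/empty conventions.
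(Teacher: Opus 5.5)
Your proof is correct, but it takes a different route from the paper in two places.

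\textbf{Part (i), forward direction.} The paper does not apply Theorem~\ref{Thm. 3.5} directly. It splits the atoms of $\mathcal{B}$ one at a time into the principal partitions of their non-singleton blocks, invoking Theorem~\ref{Thm. 3.7} at each step. This is legitimate because the hypergraph never changes and so never acquires a cycle. Your one-shot argument runs as follows: $\mathcal{D}^{\ast}$ is a disjoint system with $\mathcal{H}(\mathcal{D}^{\ast})=\mathcal{H}(A(\mathcal{B}))$ acyclic, so the circuit condition holds vacuously and Theorem~\ref{Thm. 3.5} applies. This is exactly how the paper itself argues later in Corollary~\ref{cor. 3.9}(ii). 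It avoids the induction and makes explicit two facts the iterative proof leaves implicit: $\mathcal{B}\subseteq\mathcal{B}^{\ast}$ and $A(\mathcal{B}^{\ast})=\mathcal{D}^{\ast}$. The paper's version, in turn, exhibits a chain of Boolean sublattices from $\mathcal{B}$ to $\mathcal{B}^{\ast}$, each step splitting one atom.

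\textbf{Part (i), converse.} Your use of Corollary~\ref{cor. 3.4}(i) is just a packaged form of the paper's direct appeal to Proposition~\ref{prop. 3.3}, so there is no real difference here.

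\textbf{Part (ii).} The paper gets closure under intersections from meets of the principal partitions $\pi_{A_k}$. It then derives (I$_1$) from the external Proposition~\ref{prop. 1.2}, once $\mathcal{B}^{\ast}=\mathcal{D}(U,\mathcal{I})$ is established. You instead verify (I$_1$) directly via $\pi_A\vee\pi_B=\pi_{A\cup B}$, which avoids the external result.

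Your justification that $\pi_A\in\mathcal{B}^{\ast}$ for every block $A$ of a member of $\mathcal{B}^{\ast}$ goes as follows: a block of $\bigvee_j\pi_{E_j}$ is the union of the $E_j$ it contains, and $\pi_A$ is the join of those $\pi_{E_j}$. This is precisely the point the paper uses tacitly ("$\pi_{A_k}\in\mathcal{B}^{\ast}$", "$\pi_{A_i}\in\mathcal{B}^{\ast}$ by our definition"). Note that this fact does not need acyclicity at all.

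Your unique-path argument for intersections is also more than is needed. If $A$ is a block of $\sigma\in\mathcal{B}^{\ast}$ and $B$ is a block of $\tau\in\mathcal{B}^{\ast}$ with $A\cap B\neq\emptyset$, then $A\cap B$ is already a block of $\sigma\wedge\tau\in\mathcal{B}^{\ast}$.
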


\begin{proof} (i) Assume that $\mathcal{H}\left(
A(\mathcal{B})\right)  $ is a hypertree. Observe that replacing an atom
$\alpha\in A(\mathcal{B)}$ by the principal partitions $\mathcal{\pi}_{A}$ of
its non-singleton blocks, the edges of $\mathcal{H}\left(  A(\mathcal{B}%
)\right)  $ are not changed, i.e. the hypergraph remains the same. Since
$A(\mathcal{B})$ is finite, we can replace each atom $\alpha\in A(\mathcal{B}%
)$ by principal partitions in finite steps. In view of Theorem~\ref{Thm. 3.7}, at each
step $\mathcal{B}$ is extended to a Boolean sublattice of Part$(U)$, so that
at the end we obtain a Boolean sublattice $\mathcal{B}^{\ast}$. According to
this construction, $A(\mathcal{B}^{\ast})$ is formed by all principal
partitions $\mathcal{\pi}_{A}$, where $A$ is a block of an atom $\alpha\in
A(\mathcal{B})$, and clearly, $\mathcal{H}\left(  A(\mathcal{B}^{\ast
})\right)  =\mathcal{H}\left(  A(\mathcal{B})\right)  $ also holds.

\smallskip
Conversely, suppose that $\mathcal{B}$ can be extended to a Boolean sublattice
$\mathcal{B}^{\ast}$ of Part$(U)$ having as atoms all the partitions
$\mathcal{\pi}_{A}$, where $A$ is a non-singleton block of an atom of
$\mathcal{B}$. As $\triangledown\in\mathcal{B}$, the hypergraph $\mathcal{H}%
\left(  A(\mathcal{B})\right)  $ is connected. $\mathcal{H}\left(
A(\mathcal{B})\right)  $ has no cycles: Indeed, for any cycle $\mathcal{C}$:
$v_{1},B_{1},v_{2},B_{2},v_{3},...,v_{n},B_{n},v_{1}$ in $\mathcal{H}\left(
A(\mathcal{B})\right)  $, $B_{1},B_{2},...,B_{n}$ are blocks of some atoms in
$\mathcal{B}$, and they are all different. As now the principal partitions
$\mathcal{\pi}_{B_{i}}$, $1\leq i\leq n$ are different atoms in $\mathcal{B}%
^{\ast}$, this is a contradiction to Proposition~\ref{prop. 3.3}, because $\mathcal{C}$ is
also a cycle in $\mathcal{H}\left(  A(\mathcal{B}^{\ast})\right)  $, however
each $\mathcal{\pi}_{B_{i}}$ has only one non-singleton block in $\mathcal{C}%
$. Therefore, $\mathcal{H}\left(  A(\mathcal{B})\right)  $ is a hypertree.

\medskip

\noindent(ii) Let $\mathcal{H}\left(  A(\mathcal{B})\right)  $ be a hypertree,
and $\mathcal{B}^{\ast}$ the Boolean extension of $\mathcal{B}$ constructed in
(i). We prove that adding $\emptyset$ to the set of blocks of the partitions
from $\mathcal{B}^{\ast}$ we obtain a closure system $\mathcal{I\subseteq}%
\wp(U)$ satisfying condition I$_{0}$. Indeed, $U\in\mathcal{I}$, since
$\triangledown\in\mathcal{B\subseteq B}^{\ast}$. As $\triangle\in
\mathcal{B\subseteq B}^{\ast}$, each $\{x\}$, $x\in U$ belongs to
$\mathcal{I}$. Take any $A_{k}\in\mathcal{I}$, $k\in K$. If $\underset{k\in
K}{\bigcap}A_{k}$ is a singleton or $\emptyset$, then $\underset{k\in
K}{\bigcap}A_{k}\in\mathcal{I}$. Otherwise, each $A_{k}$ is a non-singleton
block of a partition $\pi_{A_{k}}\in\mathcal{B}^{\ast}$, $k\in K$, hence
$\underset{k\in K}{\bigcap}A_{k}$ is a block of $\underset{k\in K}{\bigwedge
}\pi_{A_{k}}\in\mathcal{B}^{\ast}$, i.e. $\underset{k\in K}{\bigcap}A_{k}%
\in\mathcal{I}$. Next, we prove $\mathcal{B}^{\ast}=\mathcal{D}(U,\mathcal{I}%
)$. Indeed, as each block of a partition $\pi\in\mathcal{B}^{\ast}$ belongs to
$\mathcal{I}$, we have $\mathcal{B}^{\ast}\subseteq\mathcal{D}(U,\mathcal{I}%
)$. Let $\pi=\{A_{i}$ $\mid i\in I\}\in\mathcal{D}(U,\mathcal{I})$. Then by
Remark~\ref{rem. 1.1}, $\pi=\bigvee\{\pi_{A_{i}}\mid i\in I\}$ holds in Part$(U)$,
moreover, $\pi_{A_{i}}\in\mathcal{B}^{\ast}$ by our definition. As
$\mathcal{B}^{\ast}$ is a finite sublattice of Part$(U)$, we get $\pi
\in\mathcal{B}^{\ast}$, whence $\mathcal{D}(U,\mathcal{I})\subseteq
\mathcal{B}^{\ast}$. Since $\mathcal{D}(U,\mathcal{I})=\mathcal{B}^{\ast}$ is
a (complete) semimodular sublattice of Part$(U)$, in view of Proposition~\ref{prop. 1.2}
$\mathcal{I}$ also satisfies I$_{1}$, thus it is a semi-interval system.
\end{proof}

\medskip

\begin{corollary}\label{cor. 3.9} (i) Let $\mathcal{H}_{0}$ be a hypertree
corresponding to a Boolean sublattice $\mathcal{B}_{0}$ of Part$(U)$
containing $\triangle$ and $\triangledown$. Then there exists a semi-interval
system $\mathcal{I\subseteq\wp}(U)$ such that its decomposition lattice
$\mathcal{D}(U,\mathcal{I)}$ is a maximal Boolean sublattice of Part$(U)$ with
the property $\mathcal{H}\left(  A(\mathcal{B}_{0})\right)  =\mathcal{H}_{0}$.

(ii) Let $\mathcal{I}$ be a semi-interval system on $U$ such that
$\mathcal{D}(U,\mathcal{I)}$ is an atomistic lattice. Then $\mathcal{D}%
(U,\mathcal{I})$ is a Boolean sublattice of Part$(U)$ if and only if the atoms
$\mathcal{A}$ of $\mathcal{D}(U,\mathcal{I)}$ define a hypertree
$\mathcal{H(A})$ on $U$.
\end{corollary}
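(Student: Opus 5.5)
For (i), I apply Proposition~\ref{prop. 3.8}(i) to $\mathcal{B}_0$. Since $\mathcal{H}_0=\mathcal{H}(A(\mathcal{B}_0))$ is a hypertree, $\mathcal{B}_0$ extends to a Boolean sublattice $\mathcal{B}^{\ast}$ whose atoms are the principal partitions $\pi_A$, with $A$ a non-singleton block of an atom of $\mathcal{B}_0$. Moreover $\mathcal{H}(A(\mathcal{B}^{\ast}))=\mathcal{H}_0$, which is the claimed hypergraph property. By Proposition~\ref{prop. 3.8}(ii), $\mathcal{B}^{\ast}=\mathcal{D}(U,\mathcal{I})$ for the semi-interval system $\mathcal{I}$ formed by $\emptyset$ and the blocks of partitions in $\mathcal{B}^{\ast}$.

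It remains to show that $\mathcal{B}^{\ast}$ is maximal, and this is the main step. Suppose $\mathcal{B}^{\ast}\subset\mathcal{D}$ is a proper Boolean extension. Since $\triangle,\triangledown\in\mathcal{B}^{\ast}$, Lemma~\ref{lemma 2.6} gives an atom $\pi_A\in A(\mathcal{B}^{\ast})$ and atoms $\delta,\delta'$ of some Boolean $\mathcal{B}'\subseteq\mathcal{D}$ with $\delta\vee\delta'=\pi_A$ and $\delta\wedge\delta'=\triangle$. Every partition below $\pi_A$ has all its non-singleton blocks inside $A$. Because $\delta\vee\delta'=\pi_A$, the block $A$ is connected by the blocks of $\delta$ and $\delta'$, which form a linear hypergraph on $A$. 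I then want to show that inside $\mathcal{H}(A(\mathcal{B}'))$ the blocks of $\delta,\delta'$ within $A$ produce a configuration contradicting Proposition~\ref{prop. 3.3}. Connectivity of $A$ using two partitions that meet trivially yields a path alternating between $\delta$- and $\delta'$-blocks; together with the hypertree structure through other atoms this should produce a cycle in which some atom participates with only one block.

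I am unsure whether this contradiction always arises inside $A$ alone, for example when $\delta$ is itself principal. If it does not, the fallback is to compare sizes. Apply Corollary~\ref{cor 2.7} to an immediate extension, and use the fact that the height of $\mathcal{D}$ is bounded by a chain in Part$(U)$. A connected hypertree with edges $A_i$ satisfies $\sum(|A_i|-1)=|U|-1$, so the atoms $\pi_{A_i}$ already give a chain of length equal to the height of Part$(U)$ only when all $|A_i|=2$. So the fallback needs a genuinely local argument: show that splitting the principal atom $\pi_A$ into two disjoint nontrivial partitions $\delta,\delta'$ independent from the remaining atoms forces a violation of the circuit condition of Theorem~\ref{Thm. 3.5}. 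That is, take $u,v\in A$ joined by $\delta\vee\delta'$ but in a way that creates two paths whose common-index path does not exist.

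For (ii), I use atomisticity: $\mathcal{D}(U,\mathcal{I})$ is generated by its atoms $\mathcal{A}$, and these are principal partitions by Remark~\ref{rem. 1.1}, so they form a disjoint system. For the forward direction, if $\mathcal{D}(U,\mathcal{I})$ is Boolean, it contains $\triangle$ and $\triangledown$ (the latter because $U\in\mathcal{I}$), and its atoms are all principal. Corollary~\ref{cor. 3.4}(i) then says $\mathcal{H}(\mathcal{A})$ is a hypertree. For the converse, if $\mathcal{H}(\mathcal{A})$ is a hypertree, the circuit condition of Theorem~\ref{Thm. 3.5} holds vacuously by Remark~\ref{rem. 3.6}, since there are no cycles. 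Hence $\mathcal{A}_{\vee}\cup\{\triangle\}$ is Boolean, and by atomisticity it equals $\mathcal{D}(U,\mathcal{I})$.
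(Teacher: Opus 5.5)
\textbf{Verdict: there is a genuine gap in (i).} Your construction of $\mathcal{B}^{\ast}=\mathcal{D}(U,\mathcal{I})$ via Proposition~\ref{prop. 3.8} is sound, and your proof of (ii) is essentially the paper's. In (ii) you use Corollary~\ref{cor. 3.4}(i) for the forward direction instead of Proposition~\ref{prop. 3.8}(i), which is if anything more direct. One small correction there: $\mathcal{A}$ is a disjoint system because distinct atoms of $\mathcal{D}(U,\mathcal{I})$ meet in $\triangle$ (meets are computed in Part$(U)$), not because they are principal.

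The gap is the ``main step'' of (i). You try to show that $\mathcal{B}^{\ast}$ has no proper Boolean extension at all in Part$(U)$, and that is false in general. Suppose some edge $A$ of $\mathcal{H}_0$ has $|A|\geq 3$, and write $A=D\cup E$ with $|D|,|E|\geq 2$ and $|D\cap E|=1$. Lemma~\ref{lemma 4.3} says that replacing $\pi_A$ by $\pi_D,\pi_E$ generates a strictly larger Boolean sublattice. The smallest instance is $U=\{a,b,c\}$ with $\mathcal{B}_0=\{\triangle,\triangledown\}$, whose hypergraph is the single edge $U$. There $\mathcal{B}^{\ast}=\{\triangle,\triangledown\}\subset\{\triangle,\pi_{\{a,b\}},\pi_{\{b,c\}},\triangledown\}$.

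Your own count $\sum(|A_i|-1)=|U|-1$ shows the same thing. $\mathcal{B}^{\ast}$ has one atom per edge, so its dimension is below $|U|-1$ as soon as an edge has three points. Theorem~\ref{Thm. 4.4} then extends it to a Boolean sublattice of largest size. So neither your cycle argument inside $A$ nor the fallback (every split of $\pi_A$ violates the circuit condition) can work. The split with $\delta=\pi_D$ principal, exactly the case you were unsure about, is a counterexample.

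The maximality in (i) must be read relative to the hypergraph, and this is what the paper proves. The claim is that $\mathcal{D}(U,\mathcal{I})$ is maximal among Boolean sublattices $\mathcal{B}$ of Part$(U)$ with $\mathcal{H}(A(\mathcal{B}))=\mathcal{H}_0$. With that reading the step is short. Suppose $\mathcal{D}(U,\mathcal{I})\subseteq\mathcal{B}$ and $\mathcal{H}(A(\mathcal{B}))=\mathcal{H}_0$. Then each non-singleton block $A_i$ of an atom $\alpha$ of $\mathcal{B}$ is an edge of $\mathcal{H}_0$, so $\pi_{A_i}$ is an atom of $\mathcal{D}(U,\mathcal{I})$. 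Since $\mathcal{D}(U,\mathcal{I})$ is a sublattice, $\alpha=\bigvee_i\pi_{A_i}\in\mathcal{D}(U,\mathcal{I})$. Since $\mathcal{B}$ is atomistic, it follows that $\mathcal{B}\subseteq\mathcal{D}(U,\mathcal{I})$.
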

\begin{proof} (i) In view of Proposition~\ref{prop. 3.8}, there exists a semi-interval
system $\mathcal{I\subseteq\wp}(U)$, such that $\mathcal{B}_{0}\mathcal{\ }$is
included in the Boolean lattice $\mathcal{D}(U,\mathcal{I})$ and
$\mathcal{H(A})=\mathcal{H}_{0}$, where $\mathcal{A}$ denotes the set of atoms
of $\mathcal{D}(U,\mathcal{I})$. Assume that there is a Boolean sublattice
$\mathcal{B}$ of Part$(U)$ with $\mathcal{D}(U,\mathcal{I})\subseteq
\mathcal{B}$ and $\mathcal{H}\left(  A(\mathcal{B})\right)  =\mathcal{H}_{0}$.
We show that each atom $\alpha\in\mathcal{B}$ belongs to $\mathcal{D}%
(U,\mathcal{I})$. As $\mathcal{B}$ is an atomistic lattice, this is enough,
because it implies $\mathcal{B\subseteq D}(U,\mathcal{I})$. Indeed, each block
$A_{i}$, $i\in I$ of $\alpha$ is an edge of $\mathcal{H}_{0}$, hence it is
also a block of an atom of $\mathcal{D}(U,\mathcal{I})$ which is a principal
partition $\pi_{A_{i}}$, with $A\in\mathcal{I}$. Since $\mathcal{D}%
(U,\mathcal{I})$ is a finite sublattice of Part$(U)$, we obtain $\alpha
=\bigvee\{\pi_{A_{i}}\mid i\in I\}\in\mathcal{D}(U,\mathcal{I})$, completing
our proof.

\noindent(ii) Assume that $\mathcal{D}(U,\mathcal{I})$ is an atomistic
lattice. As $\mathcal{D}(U,\mathcal{I})$ satisfies I$_{0}$, its atoms are
principal partitions $\mathcal{\pi}_{A}$, with $A\in\mathcal{I}$, $\mid
A\mid\geq2$. Hence in view of Proposition~\ref{prop. 3.8}(i), $\mathcal{H(A})$ is a
hypertree, whenever $\mathcal{D}(U,\mathcal{I})$ is a Boolean sublattice of
Part$(U)$. Conversely, if $\mathcal{H}\left(  \mathcal{A})\right)  $ is a
hypertree, then by Remark~\ref{rem. 3.6}, the circuit condition is trivially satisfied by
$\mathcal{H(A})$, because $\mathcal{H(A})$ has no cycles at all. Since
$\mathcal{I}$ is a semi-interval system, the set $\mathcal{A}$ of the atoms of
$\mathcal{D}(U,\mathcal{I)}$ is a disjoint system in Part$(U)$. As
$\mathcal{D}(U,\mathcal{I)}$ is an atomistic lattice, $\mathcal{D}%
(U,\mathcal{I)}=\mathcal{A}_{\vee}\cup\{0\}$. By applying Theorem~\ref{Thm. 3.5} with
$\mathcal{D}:=\mathcal{A}$, we get that $\mathcal{D}(U,\mathcal{I)}$ is a
Boolean sublattice of Part$(U)$.
\end{proof}

\medskip

\section{Maximal Boolean sublattices of a finite partition
lattice}\label{sec4}

\medskip

In Section 1 we have seen that the largest size Boolean sublattices of
Part$(U)$ correspond to hypergraphs which are trees on the node set $U$,
however, there are examples of Boolean sublattices of Part$(U)$, which are
maximal with respect to inclusion, although their size is not the largest (see
e.g. Example~\ref{ex. 4.1}). Therefore, in this section we examine some properties of
Boolean sublattices of Part$(U)$ that cannot be extended to larger Boolean
sublattices, as well as some characteristics of Boolean extensions. As the
main result of this section we prove that there exist maximal Boolean
sublattices in Part$(U)$ for all possible sizes.

\medskip

\begin{example}\label{ex. 4.1} Let\textbf{ }$U=\{a,b,c,d\}$. Then the largest
size Boolean sublattices of Part$(U)$ are $3$ dimensional, i.e. they have $8$
elements. Let us define the partitions $\pi_{1}=\{\{a,b\},\{c,d\}\}$ and
$\pi_{2}=\{\{a,c\},\{b,d\}\}$. Then $\pi_{1}\vee\pi_{2}=\triangledown$ and
$\pi_{1}\wedge\pi_{2}=\triangle$, hence $\mathcal{B}=\{\triangle,\pi_{1}%
,\pi_{2},\triangledown\}$ is Boolean sublattice of Part$(U)$ with $4$
elements, and $\pi_{1},\pi_{2}$ are its atoms. Observe that $\mathcal{B}$ has
no a proper Boolean extension. In fact, by Corollary~\ref{cor 2.7}, an immediate Boolean
extension of $\mathcal{B}$ can only be obtained by splitting $\pi_{1}$ or
$\pi_{2}$ into two (new) atoms, but in both cases a cycle containing a
non-singleton block of a principal partition is formed, which contradicts
Proposition~\ref{prop. 3.3}.
\end{example}

\medskip

In all what follows, let $U$ be a nonempty finite set, and $\mathcal{B}$ a
Boolean sublattice of Part$(U)$ such that $\triangle,\nabla\in\mathcal{B}$.
Because $\mathcal{B}$ is finite, it is an atomistic lattice.

\medskip

\begin{lemma}\label{lemma 4.2} Let $\mathcal{B}$ and $\mathcal{D}$ Boolean
sublattices of Part$(U)$ such that $\triangle\in\mathcal{B\subseteq D}$. If
the hypergraph $\mathcal{H}\left(  A(\mathcal{B})\right)  $ contains a cycle
$\mathcal{C}$ then $\mathcal{H}\left(  A(\mathcal{D})\right)  $ also contains
a cycle $\mathcal{C}^{\ast}$.
\end{lemma}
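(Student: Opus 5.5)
Since $\triangle\in\mathcal{B}\subseteq\mathcal{D}$ and both lattices are finite Boolean sublattices of Part$(U)$, $\triangle$ is the least element of $\mathcal{D}$ as well. Every atom $\alpha\in A(\mathcal{B})$ is therefore a join of atoms of $\mathcal{D}$. Explicitly, $\alpha=\bigvee\{\delta\in A(\mathcal{D})\mid\delta\leq\alpha\}$. The plan is to replace each edge of the given cycle $\mathcal{C}$ by a path in $\mathcal{H}(A(\mathcal{D}))$ and then extract a cycle from the resulting closed walk.

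\textbf{Step 1: each edge becomes a path.} Write $\mathcal{C}:=v_{1},B_{1},v_{2},\dots,v_{n},B_{n},v_{1}$, where $B_{i}$ is a non-singleton block of some $\alpha_{i}\in A(\mathcal{B})$. Since $v_{i},v_{i+1}\in B_{i}$, we have $(v_{i},v_{i+1})\in\alpha_{i}=\bigvee\{\delta\in A(\mathcal{D})\mid\delta\leq\alpha_{i}\}$. As in Remark~\ref{rem. 3.2}, this gives a path $\mathcal{P}_{i}$ from $v_{i}$ to $v_{i+1}$ in $\mathcal{H}(A(\mathcal{D}))$. The edges of $\mathcal{P}_{i}$ are non-singleton blocks of atoms $\delta\leq\alpha_{i}$, so each of them is a subset of a block of $\alpha_{i}$, hence of $B_{i}$, because the path stays in the block containing $v_{i}$. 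Also, every point of $\mathcal{P}_{i}$ lies in $B_{i}$.

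\textbf{Step 2: the paths are essentially disjoint.} Because $\mathcal{C}$ is a cycle, the $B_{i}$ are distinct edges of the linear hypergraph $\mathcal{H}(A(\mathcal{B}))$. Hence $|B_{i}\cap B_{j}|\leq1$ for $i\neq j$. For consecutive indices this common point is $v_{i+1}$. For non-consecutive indices, $B_{i}\cap B_{j}$ has at most one point, which may be a point of both $\mathcal{P}_{i}$ and $\mathcal{P}_{j}$. Two edges $E\subseteq B_{i}$ and $E'\subseteq B_{j}$ with $i\neq j$ cannot coincide, since a non-singleton set cannot lie in $B_{i}\cap B_{j}$. So the paths $\mathcal{P}_{i}$ use pairwise distinct edges and share at most single points.

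\textbf{Step 3: extract a cycle.} Concatenating $\mathcal{P}_{1},\dots,\mathcal{P}_{n}$ gives a closed walk $W$ in the incidence graph of $\mathcal{H}(A(\mathcal{D}))$. No edge-vertex of the hypergraph is repeated in $W$, and $W$ uses at least $n\geq3$ hyperedges. A closed walk in a graph that traverses some edge of the graph exactly once contains a cycle through that edge. Here each incidence-graph edge $(x,E)$ can appear only inside the unique path $\mathcal{P}_{i}$ containing $E$. Within $\mathcal{P}_{i}$, a path, it appears once. So $W$ has an incidence edge traversed exactly once, and $W$ therefore contains a cycle of the incidence graph. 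That cycle alternates between points and hyperedges, contains at least two distinct hyperedges, and has no repeated vertices. By linearity and simplicity it has length at least three in the hypergraph sense, so it is the required cycle $\mathcal{C}^{\ast}$ of $\mathcal{H}(A(\mathcal{D}))$.

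\textbf{Main obstacle.} The delicate point is the non-degeneracy in Step 3: the extracted cycle must not collapse to a back-and-forth traversal. To secure this, I would argue directly instead of relying only on the graph-theoretic fact. Choose a shortest subwalk of $W$ between two occurrences of the same point that contains at least one hyperedge. Minimality makes it a path, hence a cycle. It cannot consist of a single hyperedge repeated, because no hyperedge occurs twice in $W$.
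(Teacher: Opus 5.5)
Your proof is correct and follows essentially the paper's route: write each atom of $\mathcal{B}$ as a join of atoms of $\mathcal{D}$, replace each edge $B_i$ of $\mathcal{C}$ by a path in $\mathcal{H}(A(\mathcal{D}))$ from $v_i$ to $v_{i+1}$, and extract a cycle from the resulting closed walk. Your added observation is that every hyperedge of $\mathcal{P}_i$ lies inside $B_i$, so no hyperedge is reused across different paths; together with linearity this rules out a degenerate extracted cycle, which makes rigorous the paper's rather informal final step.
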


\begin{proof} We may suppose $\mathcal{B\neq D}$. Let $\mathcal{C}%
$: $v_{1},B_{1},v_{2},...,v_{n},B_{n},v_{n+1}$ with $v_{n+1}=v_{1}$, be a
cycle in $\mathcal{H}\left(  A(\mathcal{B})\right)  $, i.e. let $B_{1}%
,...,B_{n}$ be different blocks of some atoms $\alpha_{1},...,\alpha_{m}$ of
$\mathcal{B}$. If one of these blocks belongs to an atom which is also an atom
in $\mathcal{D}$, then this block remains also an edge in the hypergraph
$\mathcal{H}\left(  A(\mathcal{D})\right)  $. If $B_{k}$ ($k\in\{1,...n\}$)
belongs to some $\alpha\notin A(\mathcal{D)}$, then in view of Lemma~\ref{lemma 2.6}
there are atoms $\delta_{1},...,\delta_{t}\in A(\mathcal{D)}$ such that
$\alpha=\delta_{1}\vee...\vee\delta_{t}$. As $(v_{k},v_{k+1})\in B_{k}%
^{2}\subseteq\alpha$, there exists a sequence $z_{1},...z_{p}\in U$, with
$z_{1}=v_{k}$, $z_{p}=v_{k+1}$ such that $(z_{i},z_{i+1})\in\delta_{j}$, for
some $j\in\{1,...t\}$, for all $1\leq i\leq p-1$. This means that each
$\{z_{i},z_{i+1}\}$ is contained in a block $D_{i}$ of some partition from
$\delta_{1},...,\delta_{t}$. W.l.o.g we may assume that the sequence
$z_{1},D_{1},z_{2},...,z_{p-1},D_{p-1},z_{p}$ has no repeating points and
blocks, i.e. it is a path in $\mathcal{H}\left(  A(\mathcal{D})\right)  $.
Therefore, any edge of $\mathcal{C}$ is either an edge in $\mathcal{H}\left(
A(\mathcal{D})\right)  $, or it can be replaced in $\mathcal{H}\left(
A(\mathcal{D})\right)  $ by a path. If these new paths have no common points,
then inserting them in $\mathcal{C}$ we get a cycle in $\mathcal{H}\left(
A(\mathcal{D})\right)  $. Otherwise, we get a closed walk $\mathcal{W}$, and
two consecutive repetitions of a point determine either a cycle or a subpath
of this $\mathcal{W}$. Since not all the points of these new paths can be
common to all of them, there must exist a cycle $\mathcal{C}^{\ast}$ defined
by two repeating points of $\mathcal{W}$, and by its construction,
$\mathcal{C}^{\ast}$ is included in $\mathcal{H}\left(  A(\mathcal{D})\right) $
\end{proof}

\medskip

Corollary~\ref{cor 2.7} points out that any immediate Boolean extension of a Boolean
sublattice $\mathcal{B}$ in Part$(U)$ is constructed by replacing an atom
$\alpha\in\mathcal{B}$ by two "new" disjoint partitions $\delta,\delta
^{\prime}\in\ $Part$(U)$ with $\delta\vee\delta^{\prime}=\alpha$. Such a
simple situation is considered in the following way.

\medskip

\begin{lemma}\label{lemma 4.3} Let $U\neq\emptyset$ be a finite set,
$\mathcal{B}$ a Boolean sublattice of Part$(U)$ with $\triangle$
$,\triangledown\in\mathcal{B}$, and $\mathcal{\pi}_{A}$ an atom in
$\mathcal{B}$ such that the set $A\subseteq U$ contains at least three
elements. If $A=D\cup E$, such that$\mid D\mid$,$\mid E\mid\geq2$ and $\mid
D\cap E\mid=1$, then the sublattice $\mathcal{L}$ of Part$(U)$ generated by
$\left(  A(\mathcal{B})\mathcal{\setminus\{\pi}_{A}\mathcal{\}}\right)
\mathcal{\cup\{\pi}_{D},\mathcal{\pi}_{E}\}$ is a Boolean lattice strictly
containing $\mathcal{B}$.
\end{lemma}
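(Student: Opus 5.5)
The plan is to apply Theorem~\ref{Thm. 3.5} to the disjoint system $\mathcal{D}:=\left(A(\mathcal{B})\setminus\{\pi_A\}\right)\cup\{\pi_D,\pi_E\}$. The alternative would be to split a single atom via Lemma~\ref{lemma 2.1}, but the hypergraph route fits the setting better.

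\textbf{Step 1: $\mathcal{D}$ is a disjoint system.} Since $|D\cap E|=1$, we get $\pi_D\wedge\pi_E=\triangle$. Moreover $\pi_D,\pi_E\le\pi_A$, so for every other atom $\beta$ of $\mathcal{B}$ we have $\pi_D\wedge\beta\le\pi_A\wedge\beta=\triangle$, and likewise for $\pi_E$. No element of $\mathcal{D}$ equals $\triangle$, because $|D|,|E|\ge2$.

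\textbf{Step 2: the circuit condition.} The edges of $\mathcal{H}(\mathcal{D})$ are those of $\mathcal{H}(A(\mathcal{B}))$ with $A$ replaced by $D$ and $E$. By Remark~\ref{rem. 3.6} it suffices to consider two paths $\mathcal{P}_1,\mathcal{P}_2$ forming a cycle $\mathcal{C}$ in $\mathcal{H}(\mathcal{D})$, with blocks from $X\subseteq\mathcal{D}$ and $Y\subseteq\mathcal{D}$ respectively. Define the map $\sigma$ sending $\pi_D,\pi_E\mapsto\pi_A$ and fixing all other elements, and put $X'=\sigma(X)$, $Y'=\sigma(Y)$.

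There are three cases.

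\emph{Case (a): neither $D$ nor $E$ occurs in $\mathcal{C}$.} Then $\mathcal{C}$ is a cycle of $\mathcal{H}(A(\mathcal{B}))$ using blocks of $X\setminus\{\pi_D,\pi_E\}$ and $Y\setminus\{\pi_D,\pi_E\}$. The circuit condition for $\mathcal{B}$, which holds by Theorem~\ref{Thm. 3.5}, yields a connecting path from blocks of $X\cap Y$.

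\emph{Case (b): exactly one of $D,E$ occurs, say $D$.} Replacing $D$ by $A$ (possibly shortcutting) gives a closed walk in $\mathcal{H}(A(\mathcal{B}))$ with $\pi_A$ contributing one block. I would then reduce to a cycle and invoke Proposition~\ref{prop. 3.3} to obtain a contradiction, since a principal partition cannot occur with exactly one block in a cycle of $\mathcal{H}(A(\mathcal{B}))$.

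\emph{Case (c): both $D$ and $E$ occur.} Since $D\cup E=A$, any cycle using both must traverse $A$ twice in $\mathcal{H}(A(\mathcal{B}))$, except for the triangle-like configurations through the common point. I would handle these by replacing the segment through $D$ and $E$ by the single edge $A$, which produces a shorter cycle or path in $\mathcal{H}(A(\mathcal{B}))$.

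In every case I would then pull the path obtained for $\mathcal{B}$ (built from blocks of $X'\cap Y'$) back to $\mathcal{H}(\mathcal{D})$. When that path uses the edge $A$, I must replace $A$ by $D$, $E$, or the path $D,w,E$ through the point $w\in D\cap E$. This step is the \textbf{main obstacle}: I need $\pi_D$ (respectively $\pi_E$) to lie in $X\cap Y$ whenever it is used, and not merely $\pi_A\in X'\cap Y'$. The key point to establish is that if $\pi_A\in X'\cap Y'$ arises only because $\pi_D\in X$ and $\pi_E\in Y$, then the situation is precisely a cycle of $\mathcal{H}(A(\mathcal{B}))$ through $A$ with a single block of $\pi_A$. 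Proposition~\ref{prop. 3.3} excludes this. Settling this bookkeeping is where most of the care goes.

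\textbf{Step 3: conclusion.} Theorem~\ref{Thm. 3.5} gives that $\mathcal{L}=\mathcal{D}_\vee\cup\{\triangle\}$ is a Boolean sublattice. Lemma~\ref{lemma 2.5} shows this equals the generated sublattice $\langle\mathcal{D}\rangle$. Since $\pi_A=\pi_D\vee\pi_E\in\mathcal{L}$, we have $\mathcal{B}\subseteq\mathcal{L}$, and the inclusion is strict because $\pi_D\notin\mathcal{B}$: it lies strictly below the atom $\pi_A$ and differs from $\triangle$.
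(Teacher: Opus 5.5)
\textbf{Gap: Case (c) and the pull-back step are left open, and your plan for them aims at the wrong target.} Your Steps 1 and 3 and Cases (a) and (b) are fine. But Case (c), and the pull-back step you yourself call the main obstacle and leave as unsettled bookkeeping, are exactly where the work lies. You propose to contract $D,E$ to $A$, obtain a connecting path in $\mathcal{H}(A(\mathcal{B}))$ from blocks of $X'\cap Y'$, and then replace occurrences of $A$ by $D$, $E$ or $D,w,E$ while tracking whether $\pi_D,\pi_E\in X\cap Y$. No such tracking is needed: after the reduction of Remark~\ref{rem. 3.6}, the cycles of Cases (b) and (c) do not exist. You see this for Case (b). For Case (c), however, you only claim a \emph{shorter cycle or path} to be pulled back, without noticing that this shorter configuration is itself impossible.

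\textbf{Missing observation: no cycle of $\mathcal{H}(\mathcal{D})$ contains $D$ or $E$.} Since $\pi_A$ has a single non-singleton block, Proposition~\ref{prop. 3.3} shows that $A$ lies on no cycle of $\mathcal{H}(A(\mathcal{B}))$. Moreover, every edge $G\neq A$ of $\mathcal{H}(A(\mathcal{B}))$ is a block of an atom $\beta\neq\pi_A$, so $|A\cap G|\le 1$.

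Now suppose a cycle of $\mathcal{H}(\mathcal{D})$ contains both $D$ and $E$. List its edges cyclically as $D,F_1,\dots,F_a,E,G_1,\dots,G_b$ with $a+b\ge 1$; by the symmetry between $D$ and $E$ you may assume $b\ge 1$. Let $x$ be the connection point of $G_b$ and $D$, and $y$ the connection point of $E$ and $G_1$. These are distinct vertices of the cycle, lying in $D\subseteq A$ and $E\subseteq A$ respectively. Replace the arc $D,F_1,\dots,F_a,E$ by the single edge $A$.
\begin{itemize}
\item If $b\ge 2$, this yields the cycle $x,A,y,G_1,\dots,G_b,x$ through $A$ in $\mathcal{H}(A(\mathcal{B}))$, contradicting Proposition~\ref{prop. 3.3}.
\item If $b=1$, it gives $x,y\in A\cap G_1$, contradicting $|A\cap G_1|\le 1$.
\end{itemize}
Hence only Case (a) survives. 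There, the circuit condition for $\mathcal{H}(A(\mathcal{B}))$, applied to $X\setminus\{\pi_D,\pi_E\}$ and $Y\setminus\{\pi_D,\pi_E\}$, directly gives a path built from blocks of partitions in $X\cap Y$. With this fix your hypergraph route is a valid alternative.

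\textbf{Comparison with the paper.} The paper takes the route you dismissed. With $m=\bigvee\left(A(\mathcal{B})\setminus\{\pi_A\}\right)$ and $p=m\vee\pi_D$, it applies Lemma~\ref{lemma 2.1} twice, using that principal partitions are modular elements. First, $\pi_D\wedge m=\triangle$. Second, $(m,\pi_A)\in M$ gives $p\wedge\pi_E=\left(\pi_D\vee(m\wedge\pi_A)\right)\wedge\pi_E=\triangle$. This is shorter and avoids the informal reduction of Remark~\ref{rem. 3.6}.
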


\begin{proof} This is clear, if $\mathcal{B=\{}\triangle$
$,\triangledown\}$. Assume that $\mid\mathcal{B\mid\geq}4$. Since
$\mathcal{\pi}_{A}=\mathcal{\pi}_{D}\vee\mathcal{\pi}_{E}$, and $\mathcal{\pi
}_{D},\mathcal{\pi}_{E}\notin\mathcal{B}$, the lattice $\mathcal{L}$ properly
contains $\mathcal{B}$. Because $\mathcal{B}$ is a Boolean sublattice of
Part$(U)$, $A(\mathcal{B})$ and $A(\mathcal{B})\mathcal{\setminus\{\pi}%
_{A}\mathcal{\}}$ are independent sets in Part$(U)$. Now, let $m:=\bigvee
\left(  A(\mathcal{B})\mathcal{\setminus\{\pi}_{A}\mathcal{\}}\right)  $ and
$p:=m\vee\mathcal{\pi}_{D}$. Observe that $\mathcal{\pi}_{D}\wedge
\mathcal{\pi}_{E}=\triangle$, $\mathcal{\pi}_{A}\wedge m=\triangle$. As all
principal partitions are modular elements in Part$(U)$, we have $\left(
\mathcal{\pi}_{D},m\right)  ,(\mathcal{\pi}_{E},p),\left(  m,\mathcal{\pi}%
_{A}\right)  \in M$.

Since $\mathcal{\pi}_{D}\wedge m=\triangle$ and $\left(\mathcal{\pi}
_{D},m\right)  \in M$, by using Lemma~\ref{lemma 2.1} we get that $\left(  A(\mathcal{B}%
)\mathcal{\setminus\{\pi}_{A}\mathcal{\}}\right)  \mathcal{\cup\{\pi}_{D}\}$
is an independent set, and 
\begin{equation}
\bigvee\left(\left(  A(\mathcal{B}
)\mathcal{\setminus\{\pi}_{A}\mathcal{\}}\right)  \mathcal{\cup\{\pi}%
_{D}\}\right)  =m\vee\mathcal{\pi}_{D}=p. \tag{3}
\end{equation}

\noindent Because $\left(  m,\mathcal{\pi
}_{A}\right)  \in M$ and $\mathcal{\pi}_{D},\mathcal{\pi}_{E}\leq\mathcal{\pi
}_{A}$ we obtain

\smallskip

$ p\wedge\mathcal{\pi}_{E}=(\mathcal{\pi}_{D}\vee m)\wedge\left(\mathcal{\pi
}_{A}\wedge\mathcal{\pi}_{E}\right)  =
[(\mathcal{\pi}_{D}\vee m)\wedge
\mathcal{\pi}_{A}]\wedge\mathcal{\pi}_{E}=
 [\mathcal{\pi}_{D}\vee
(m\wedge\mathcal{\pi}_{A})]\wedge\mathcal{\pi}_{E}=
(\mathcal{\pi}_{D}\vee\triangle)\wedge\mathcal{\pi}_{E}=\triangle$

\smallskip
Since $(\mathcal{\pi}_{E},p)\in M$ also holds, by applying Lemma~\ref{lemma 2.1}
again, we get that

\centering{$ \mathcal{H}=\left(  A(\mathcal{B})\mathcal{\setminus\{\pi
}_{A}\mathcal{\}}\right)  \mathcal{\cup\{\pi}_{D},\mathcal{\pi}_{E}\} $}

is an independent system in Part$(U)$. In view of Lemma 2.5, this implies that
$\mathcal{H}_{\vee}\cup\{0\}$ is a Boolean sublattice of Part$(U)$. Then the
sublattice $\mathcal{L}$ of Part$(U)$ generated by $\mathcal{H}$ coincides with
$\mathcal{H}_{\vee}\cup\{0\}$. 
\end{proof}

\medskip

By using the previous lemmas, we are able now to answer the question of weather
a Boolean sublattice of Part$(U)$ is contained in a Boolean sublattice with
a largest possible size.

\bigskip

\begin{theorem}\label{Thm. 4.4} Let $\mathcal{B}$ be a Boolean sublattice of
Part$(U)$ with $\triangle,\nabla\in\mathcal{B}$. Then $\mathcal{B}$ can be
extended to a Boolean sublattice of Part$(U)$ with a largest size if and only
if the hypergraph $\mathcal{H}\left(  A(\mathcal{B})\right)  $ assigned to its
atoms is a hypertree.
\end{theorem}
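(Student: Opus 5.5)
We prove the two implications separately. Throughout, $|U|=n$, and by Proposition~\ref{prop. 1.4} a largest size Boolean sublattice of Part$(U)$ is $(n-1)$-dimensional. It is the lattice of partitions into subtrees of some tree $T$ on $U$, and its atoms are the principal partitions $\pi_{\{a,b\}}$ with $\{a,b\}$ an edge of $T$.

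\emph{Necessity.} Suppose $\mathcal{B}\subseteq\mathcal{D}$, where $\mathcal{D}$ is a largest size Boolean sublattice of Part$(U)$. Then $\triangle\in\mathcal{B}$, and every atom of $\mathcal{D}$ is a principal partition. By Corollary~\ref{cor. 3.4}(i), the hypergraph $\mathcal{H}(A(\mathcal{D}))$ has no cycles; indeed it is just the tree $T$. If $\mathcal{H}(A(\mathcal{B}))$ contained a cycle, Lemma~\ref{lemma 4.2} would give a cycle in $\mathcal{H}(A(\mathcal{D}))$, which is impossible. Hence $\mathcal{H}(A(\mathcal{B}))$ is acyclic. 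Since $\triangledown\in\mathcal{B}$, Remark~\ref{rem. 3.2} shows it is connected, so it is a hypertree.

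\emph{Sufficiency.} Suppose $\mathcal{H}(A(\mathcal{B}))$ is a hypertree. By Proposition~\ref{prop. 3.8}(i), $\mathcal{B}$ extends to a Boolean sublattice $\mathcal{B}^{\ast}$ whose atoms are the principal partitions $\pi_A$, one for each edge $A$ of the same hypertree, and $\triangle,\triangledown\in\mathcal{B}^{\ast}$. We then refine repeatedly. Whenever some atom $\pi_A$ has $|A|\ge 3$, choose $a\in A$ and a proper split $A=D\cup E$ with $D\cap E=\{a\}$ and $|D|,|E|\ge2$. Lemma~\ref{lemma 4.3} gives a strictly larger Boolean sublattice, whose atoms are again principal partitions, now with $\pi_A$ replaced by $\pi_D,\pi_E$. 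Its top is $\bigvee$ of the atoms, which is still $\triangledown$ because $\pi_D\vee\pi_E=\pi_A$. Its bottom is still $\triangle$, so the hypotheses of Lemma~\ref{lemma 4.3} persist.

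Each step raises the dimension by one, and the dimension is bounded by $n-1$, so the process terminates. At the end every atom has the form $\pi_{\{a,b\}}$. By Corollary~\ref{cor. 3.4}(i), the resulting graph on $U$ is a hypertree whose edges are pairs, i.e.\ a tree. It therefore has $n-1$ edges, the lattice is $(n-1)$-dimensional, and it has the largest size by Proposition~\ref{prop. 1.4}.

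The main obstacle is keeping the invariants straight during the iteration: after each application of Lemma~\ref{lemma 4.3}, the new lattice must again contain $\triangle$ and $\triangledown$, and its atoms must remain principal partitions. The one point needing care is $\triangle$. The independence argument in that lemma shows the lattice equals $\mathcal{H}_{\vee}\cup\{\triangle\}$, so $\triangle$ belongs to it, and the iteration is legitimate. A cleaner alternative is to bypass the iteration entirely: replace each hyperedge $A$ by a spanning star on $A$. The resulting graph is a spanning tree $T$ of $U$, since a hypertree with stars inserted stays connected and acyclic. Every block of every partition in $\mathcal{B}^{\ast}$ is a union of hyperedges glued along a connected sub-hypertree, hence connected in $T$. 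Thus $\mathcal{B}^{\ast}$ is contained in the Boolean lattice of $T$-subtree partitions from Proposition~\ref{prop. 1.4}(ii).
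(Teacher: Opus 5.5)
Your proof is correct and follows the paper's argument essentially step for step. Necessity uses Lemma~\ref{lemma 4.2} (cycles persist under Boolean extensions) together with Remark~\ref{rem. 3.2}. Sufficiency uses Proposition~\ref{prop. 3.8}, then repeated splitting via Lemma~\ref{lemma 4.3} (the paper uses the particular split $\{a_1,a_2\}$, $\{a_2,\dots,a_n\}$), and concludes with Corollary~\ref{cor. 3.4}(i) and Proposition~\ref{prop. 1.4}.

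Your star-replacement alternative is also sound and slightly more direct. It embeds $\mathcal{B}$ straight into the subtree-partition lattice of a spanning tree, with no iteration. It does not even need $\mathcal{B}^{\ast}$, since every block of a join of atoms of $\mathcal{B}$ is already a connected union of hyperedges.
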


\begin{proof} Assume that $\mathcal{H}\left(  A(\mathcal{B}%
)\right)  $ is a hypertree. Then in view of Proposition~\ref{prop. 3.8}, $\mathcal{B}$ can
be extended to a Boolean sublattice $\mathcal{B}^{\ast}$ of Part$(U)$ having
as atoms principal partitions $\mathcal{\pi}_{A}$, where $A\subseteq U$ is a
finite set with $\mid A\mid\geq2$ and $\mathcal{H}\left(  A(\mathcal{B}^{\ast
})\right)  $ is the same as $\mathcal{H}\left(  A(\mathcal{B})\right)  $. If
$A=\{a_{1},a_{2},...,a_{n}\}$, with $n\geq3$, then in view of Lemma~\ref{lemma 4.3},
$\mathcal{\pi}_{A}$ can be replaced by partitions $\mathcal{\pi
}_{\{a_{1},a_{2}\}}$ and $\mathcal{\pi}_{\{a_{2},...,a_{n}\}}$ such that the
obtained new set $\left(  A(\mathcal{B})\mathcal{\setminus\{\pi}_{A}\mathcal{\}}%
\right)  \mathcal{\cup\{\pi}_{\{a_{1},a_{2}\}},\mathcal{\pi}_{\{a_{2}%
,...,a_{n}\}}\}$ of atoms defines a proper Boolean extension of $\mathcal{B}$.
Since $\mathcal{B}^{\ast}$is finite, all its atoms can be replaced in finite
steps by principal partitions having non-singleton blocks with two elements,
and these partitions are atoms of a Boolean sublattice $\widehat{\mathcal{B}}\subseteq
\ $Part$(U)$ that extends $\mathcal{B}^{\ast}$. As the edges of $\mathcal{H}%
\left(  A(\widehat{\mathcal{B}})\right)  $ have two elements, $\mathcal{H}%
\left(  A(\widehat{\mathcal{B}})\right)  $ is a (simple) graph. Since the atoms
of $\widehat{\mathcal{B}}$ are principal partitions and $\triangle
,\triangledown\in\mathcal{B\subseteq}$ $\widehat{\mathcal{B}}$, in view of
Corollary~\ref{cor. 3.4}(i), this graph has no cycles and it is connected, i.e. it is a
tree. In view of Proposition~\ref{prop. 1.4}, $\widehat{\mathcal{B}}$ is a Boolean
sublattice of Part$(U)$ with a largest size, and $\mathcal{B\subseteq}$
$\widehat{\mathcal{B}}$.

Conversely, assume that $\mathcal{B}$ can be extended to a Boolean sublattice
$\mathcal{D}$\ of Part$(U)$ of largest size. Then the hypergraph
$\mathcal{H}\left(  A(\mathcal{D})\right)  $ must be a tree, according to the
proof of Proposition~\ref{prop. 1.4}. Since by Lemma~\ref{lemma 4.2} cycles do not disappear under
Boolean extensions, the hypergraph $\mathcal{H}\left(  A(\mathcal{B})\right)
$ cannot contain cycles. As $\nabla\in\mathcal{B}$, in view of Remark~\ref{rem. 3.2},
$\mathcal{H}\left(  A(\mathcal{B})\right)  $ is connected, thus it is a
hypertree.
\end{proof}

By applying now Corollary 3.9 (ii) we obtain:

\medskip

\begin{corollary}\label{cor. 4.5} Let $\mathcal{I}$ be a semi-interval system
on $U$ such that $\mathcal{D}(U,\mathcal{I)}$ is a Boolean lattice. Then
$\mathcal{D}(U,\mathcal{I)}$ can be extended to a Boolean sublattice of
Part$(U)$ with a largest size.
\end{corollary}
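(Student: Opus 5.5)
The plan is to combine Corollary~\ref{cor. 3.9}(ii) with Theorem~\ref{Thm. 4.4}. Theorem~\ref{Thm. 4.4} applies to Boolean sublattices containing both $\triangle$ and $\triangledown$ whose atom-hypergraph is a hypertree. So it suffices to check three things for $\mathcal{B}:=\mathcal{D}(U,\mathcal{I})$: that it is a Boolean sublattice of Part$(U)$ containing $\triangle$ and $\triangledown$, that it is atomistic (so Corollary~\ref{cor. 3.9}(ii) applies), and that its atoms $\mathcal{A}$ yield a hypertree $\mathcal{H}(\mathcal{A})$.

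First, by Proposition~\ref{prop. 1.2}, since $\mathcal{I}$ satisfies (I$_0$) and (I$_1$), $\mathcal{D}(U,\mathcal{I})$ is a sublattice of Part$(U)$. It is assumed Boolean, and it contains $\triangle$ (by (I$_0$), see Remark~\ref{rem. 1.1}) and $\triangledown$ (as $U$ is closed, being the top of the closure system). Thus $\mathcal{B}$ is a Boolean sublattice of Part$(U)$ with $\triangle,\triangledown\in\mathcal{B}$.

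Next, being a finite Boolean lattice, $\mathcal{D}(U,\mathcal{I})$ is atomistic. By Remark~\ref{rem. 1.1} its atoms are principal partitions $\pi_A$ with $A\in\mathcal{I}$, $|A|\ge 2$. Corollary~\ref{cor. 3.9}(ii) then gives that $\mathcal{H}(\mathcal{A})$ is a hypertree. Alternatively, one can argue directly: Corollary~\ref{cor. 3.4}(i) applies since all atoms are principal and $\triangle,\triangledown\in\mathcal{B}$. Hence $\mathcal{H}(A(\mathcal{B}))$ is a hypertree.

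Finally, Theorem~\ref{Thm. 4.4} yields an extension of $\mathcal{B}$ to a Boolean sublattice of Part$(U)$ of largest size, which by Proposition~\ref{prop. 1.4} is the lattice of partitions into subtrees of some tree on $U$. The only point requiring care is confirming that $\triangledown\in\mathcal{D}(U,\mathcal{I})$, i.e.\ that $U\in\mathcal{I}$. This holds because $\mathcal{I}$ is a closure system, so the empty intersection $U$ is closed. Everything else is a direct citation.
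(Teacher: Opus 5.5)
Your proof is correct and follows the paper's route: the paper obtains this corollary by applying Corollary~\ref{cor. 3.9}(ii), which shows the atoms of $\mathcal{D}(U,\mathcal{I})$ form a hypertree, and then Theorem~\ref{Thm. 4.4}. Your explicit checks that $\mathcal{D}(U,\mathcal{I})$ is a sublattice of Part$(U)$ (Proposition~\ref{prop. 1.2}) containing $\triangle$ (by (I$_0$)) and $\triangledown$ (since $U\in\mathcal{I}$) simply spell out the hypotheses of Theorem~\ref{Thm. 4.4} that the paper leaves implicit.
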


One of the simplest ways to decompose a multi-block atom $\alpha$ of a Boolean
sublattice $\mathcal{B}$ of Part$(U)$ in the form $\alpha=\alpha_{1}\vee
\alpha_{2}$ with $\alpha_{1}\wedge\alpha_{2}=\triangle$, it is to divide its
non-singleton blocks $A_{1},...,A_{n}$ into two groups $A_{1},...,A_{k}$ and
$A_{k+1},...,A_{n}$ so that $\left(  \pi_{A_{1}}\vee...\vee\pi_{A_{k}}\right)
\wedge\left(  \pi_{A_{k+1}}\vee...\vee\pi_{A_{n}}\right)  =\triangle$, and
then define the partitions $\alpha_{1}=\pi_{A_{1}}\vee...\vee\pi_{A_{k}}$,
$\alpha_{2}=\pi_{A_{k+1}}\vee...\vee\pi_{A_{n}}$. The question we would like
to find at least a partial answer to, is under what conditions does the
sublattice of Part$(U)$ generated by $\mathcal{D}=\left(  A(\mathcal{B}%
)\mathcal{\setminus\{\alpha\}}\right)  \mathcal{\cup\{}\alpha_{1},\alpha
_{2}\}$ is a Boolean lattice?

\medskip

We will show that in the case when a non-singleton block $A_{1}$ of $\alpha$ is not
contained in any cycle of $\mathcal{H}\left(  A(\mathcal{B})\right)  $, then
setting $\alpha_{1}$ equal to the principal partition $\pi_{A_{1}}$ and
$\alpha_{2}:=\pi_{A_{2}}\vee...\vee\pi_{A_{n}}$, this pair $\alpha_{1}%
,\alpha_{2}$ will do the job. \medskip

Clearly, $\alpha_{1}\wedge\alpha_{2}=\triangle$ and it is easy to check that
$\mathcal{D}=\left(  A(\mathcal{B})\mathcal{\setminus\{\alpha\}}\right)
\mathcal{\cup\{}\alpha_{1},\alpha_{2}\}$ is a disjoint system. We prove that
$\mathcal{H}\left(  D\right)  $ satisfies the circuit condition. 

Indeed, let
$\mathcal{P}_{1}$: $u,A_{1},v_{1},A_{2},...,v_{n-1},A_{n},v$ and
$\mathcal{P}_{2}$: $u,B_{1},w_{1},B_{2},...,w_{m-1},B_{m}v$ be two paths in
$\mathcal{H}\left(  D\right)  $ with common endpoints $u$ and $v$, and such
that $A_{1},...,A_{n}$ and $B_{1},...,B_{m}$ are blocks of some partitions
from $X\subseteq\mathcal{D}$, respectively from $Y\subseteq\mathcal{D}$. In
view of Remark~\ref{rem. 3.6}, we may suppose that they form a cycle in $\mathcal{H}%
\left(  D\right)  $. Since all above blocks belong to some partition
$\pi\in A(\mathcal{B})$, and the circuit condition holds in $\mathcal{H}%
\left(  A(\mathcal{B})\right)  $, there exists a path $\mathcal{P}:$
$u,C_{1},u_{1},...,u_{k-1},C_{k},v$ with $u\in C_{1}$, $v\in C_{k}$ in
$\mathcal{H}\left(  A(\mathcal{B})\right)  $ such that $C_{1},...,C_{k}$ are
blocks of some partitions $\gamma_{1},...,\gamma_{r}$ in $\mathcal{E}%
\cap\mathcal{F}$, where $\mathcal{E}$ respectively $\mathcal{F}$, are obtained
from $X$, respectively $Y$, by replacing any occurrence of $\alpha_{1}$and
$\alpha_{2}$ by $\alpha$. If $\alpha_{1},\alpha_{2}\notin X\cap Y$, then
clearly, $\gamma_{1},...,\gamma_{r}\in X\cap Y$ also holds.

\smallskip

Now, assume that some $\pi\in\mathcal{\{}\alpha_{1},\alpha_{2}\}$ appears in
$X\cap Y$, i.e. both $\mathcal{P}_{1}$ and $\mathcal{P}_{2}$ contain some
blocks of $\alpha$. Then $\mathcal{P}$ also contains some block of $\alpha$.
Observe that the block $A_{1}$ is not contained in any of them. Indeed, as
$\mathcal{P}$ together with $\mathcal{P}_{1}$, and $\mathcal{P}$ together with
$\mathcal{P}_{2}$ form closed walks, and $A_{1}$ is not included in any cycle,
$A_{1}$ would be a common edge for all of them, whenever it is contained in one
of them. However, this is a contradiction, because in such a case
$\mathcal{P}_{1}$ and $\mathcal{P}_{2}$ would not form a cycle. Thus each
block of $\alpha$ that appears in one of these paths must be a block of
$\alpha_{2}$, and hence $\mathcal{P}$ also belongs to $\mathcal{H}\left(
D\right)  $. As now any element of $X\cup Y$ which is not in $A(\mathcal{B}%
)\mathcal{\setminus\{\alpha\}}$ equals to $\alpha_{2}$, by replacing $\alpha$
by $\alpha_{2}$ in the set $\{\gamma_{1},...,\gamma_{r}\}$, $\gamma
_{1},...,\gamma_{r}\in X\cap Y$ holds again. Thus the circuit condition is
satisfied by $\mathcal{H}\left(  D\right)  $, therefore, $\mathcal{L}=D_{\vee
}\cup\{\triangle\}$ is a Boolean lattice, according to Theorem~\ref{Thm. 3.5}.

\medskip
Hence, in view of Corollary~\ref{cor 2.7}, we proved the following:

\medskip

\begin{proposition}\label{prop. 4.6} Let $\mathcal{B}$ be a Boolean sublattice
of Part$(U)$ with $\triangle,\nabla\in\mathcal{B}$. If an atom $\alpha$ of
$\mathcal{B}$ has a non-singleton block $A_{1}$ that is not contained in any
cycle of the hypergraph $\mathcal{H}\left(  A(\mathcal{B})\right)  $, and
$\alpha$ has some other non-singleton blocks $A_{2},...,A_{n}$ also, then the
sublattice of Part$(U)$ induced by $\left(  A(\mathcal{B})\mathcal{\setminus
\{\alpha\}}\right)  \mathcal{\cup\{}\alpha_{1},\alpha_{2}\}$, where
$\alpha_{1}=\pi_{A_{1}}$ and $\alpha_{2}=\pi_{A_{2}}\vee...\vee\pi_{A_{n}}$,
is an immediate Boolean extension of $\mathcal{B}$.
\end{proposition}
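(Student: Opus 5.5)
The plan is to split the statement into three parts. First, $\mathcal{D}=(A(\mathcal{B})\setminus\{\alpha\})\cup\{\alpha_1,\alpha_2\}$ is a disjoint system. Second, $\mathcal{L}=\mathcal{D}_{\vee}\cup\{\triangle\}$ is a Boolean sublattice of Part$(U)$, which we get from Theorem~\ref{Thm. 3.5} by checking the circuit condition for $\mathcal{H}(\mathcal{D})$. Third, $\mathcal{B}\subset\mathcal{L}$ is an immediate extension, which follows from Corollary~\ref{cor 2.7}.

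For disjointness:
\begin{itemize}
\item $\alpha_1\wedge\alpha_2=\triangle$, because $A_1$ meets each block of $\alpha_2=\pi_{A_2}\vee\cdots\vee\pi_{A_n}$ (namely $A_2,\dots,A_n$ and singletons) in at most one point.
\item For $\beta\in A(\mathcal{B})\setminus\{\alpha\}$ we have $\alpha_i\le\alpha$, so $\beta\wedge\alpha_i\le\beta\wedge\alpha=\triangle$.
\end{itemize}
Note also that $\mathcal{H}(\mathcal{D})=\mathcal{H}(A(\mathcal{B}))$ as hypergraphs. Only the labelling of the edges changes: blocks of $\alpha$ are now attributed to $\alpha_1$ (only $A_1$) or to $\alpha_2$ (the rest). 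Consequently $A_1$ still lies on no cycle of $\mathcal{H}(\mathcal{D})$.

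For the circuit condition, by Remark~\ref{rem. 3.6} it suffices to take two paths $\mathcal{P}_1,\mathcal{P}_2$ from $u$ to $v$ that together form a cycle, with blocks from $X$ and from $Y$ respectively ($X,Y\subseteq\mathcal{D}$). Since $A_1$ lies on no cycle, neither path uses $A_1$, so every $\alpha$-block occurring in them is a block of $\alpha_2$. If $\alpha_1$ belongs to $X$ or $Y$ we may therefore delete it without affecting the paths. We then let $\mathcal{E},\mathcal{F}\subseteq A(\mathcal{B})$ be obtained from $X,Y$ by replacing $\alpha_2$ with $\alpha$. The circuit condition in $\mathcal{H}(A(\mathcal{B}))$, which holds by Theorem~\ref{Thm. 3.5}, yields a path $\mathcal{P}$ from $u$ to $v$ using blocks of partitions in $\mathcal{E}\cap\mathcal{F}$.

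The main obstacle is to show that $\mathcal{P}$ does not use $A_1$, so that $\mathcal{P}$ is a valid path for $X\cap Y$ once $\alpha$ is translated back to $\alpha_2$. The approach is as follows. Suppose $A_1$ lies on $\mathcal{P}$, say between consecutive points $x,y\in A_1$. Then the walk obtained by following $\mathcal{P}_1$ reversed from $v$ back to $u$ and then $\mathcal{P}$ from $u$ to $v$ is closed and contains the edge $A_1$ exactly once, since $\mathcal{P}_1$ avoids $A_1$. Removing $A_1$ from this closed walk leaves a walk from $x$ to $y$ avoiding $A_1$. Shortening it to a path and closing it up with $A_1$ gives a cycle through $A_1$, contradicting the hypothesis. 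If $\mathcal{P}$ uses no block of $\alpha$ at all, there is nothing to check. Hence $\mathcal{P}$ uses only blocks of partitions in $(\mathcal{E}\cap\mathcal{F})$ with $\alpha$ read as $\alpha_2$. Since $\alpha\in\mathcal{E}\cap\mathcal{F}$ exactly when $\alpha_2\in X\cap Y$, this set lies in $X\cap Y$, and the circuit condition holds in $\mathcal{H}(\mathcal{D})$.

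Finally, $\mathcal{L}$ contains $\alpha=\alpha_1\vee\alpha_2$ and all other atoms of $\mathcal{B}$, hence it contains $\mathcal{B}$. It has $|A(\mathcal{B})|+1$ atoms, since $\alpha_1\neq\alpha_2$ are both nonzero, using $n\ge2$. By the converse part of Corollary~\ref{cor 2.7}, replacing one atom by two atoms whose join is that atom produces an immediate Boolean extension.
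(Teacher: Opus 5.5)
Your proposal is correct and follows essentially the same route as the paper: you show that $\mathcal{D}$ is a disjoint system, reduce via Remark~\ref{rem. 3.6} to paths forming a cycle, relabel $\alpha_2$ as $\alpha$ to invoke the circuit condition in $\mathcal{H}(A(\mathcal{B}))$, show that the resulting path avoids $A_1$, and finish with Corollary~\ref{cor 2.7}. Two of your steps are tighter than the paper's version: you discard $\alpha_1$ from $X,Y$ before relabelling, which covers the case $\alpha_1\in X\setminus Y$, $\alpha_2\in Y\setminus X$ that the paper's ``clearly'' passes over, and you give an explicit closed-walk argument that $A_1$ cannot lie on $\mathcal{P}$.
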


\medskip

\begin{corollary}\label{cor. 4.7} Let $\mathcal{B}$ be a maximal Boolean
sublattice of Part$(U)$. If $A$ is a non-singleton block of an atom $\alpha$
of $\mathcal{B}$ which is not contained in any cycle of the hypergraph
$\mathcal{H}\left(  A(\mathcal{B})\right)  $, then $\alpha=\pi_{A}$ and $A$
has two elements.
\end{corollary}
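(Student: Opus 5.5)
We argue by contradiction in two steps, using maximality of $\mathcal{B}$ twice. By Proposition~\ref{prop. 2.2} (equivalently Corollary~\ref{cor. 2.3}), a maximal Boolean sublattice contains both $\triangle$ and $\triangledown$. Hence all the results of Sections 3 and 4 that assume $\triangle,\nabla\in\mathcal{B}$ apply to $\mathcal{B}$, in particular Proposition~\ref{prop. 4.6} and Lemma~\ref{lemma 4.3}.

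\emph{Step 1: $\alpha=\pi_A$.} Suppose $\alpha$ has, besides $A$, another non-singleton block. Set $A_1:=A$ and let $A_2,\dots,A_n$ ($n\geq 2$) be the remaining non-singleton blocks of $\alpha$. Since $A_1$ lies in no cycle of $\mathcal{H}(A(\mathcal{B}))$, Proposition~\ref{prop. 4.6} applies. It shows that the sublattice generated by $(A(\mathcal{B})\setminus\{\alpha\})\cup\{\pi_{A_1},\pi_{A_2}\vee\dots\vee\pi_{A_n}\}$ is an immediate, hence proper, Boolean extension of $\mathcal{B}$. This contradicts maximality. Therefore $A$ is the only non-singleton block of $\alpha$, i.e. $\alpha=\pi_A$.

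\emph{Step 2: $|A|=2$.} Since $A$ is a non-singleton block, $|A|\geq 2$. Suppose $|A|\geq3$ and write $A=\{a_1,a_2,\dots,a_k\}$ with $k\geq 3$. Put $D=\{a_1,a_2\}$ and $E=\{a_2,\dots,a_k\}$. Then $|D|,|E|\geq 2$ and $|D\cap E|=1$. Because $\pi_A$ is an atom of $\mathcal{B}$ and $\triangle,\triangledown\in\mathcal{B}$, Lemma~\ref{lemma 4.3} applies. It gives a Boolean sublattice strictly containing $\mathcal{B}$, generated by $(A(\mathcal{B})\setminus\{\pi_A\})\cup\{\pi_D,\pi_E\}$, again contradicting maximality. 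Hence $|A|=2$.

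The only delicate point is securing the standing hypothesis $\triangle,\nabla\in\mathcal{B}$ needed by Proposition~\ref{prop. 4.6} and Lemma~\ref{lemma 4.3}. Proposition~\ref{prop. 2.2} handles it: if either element were missing, $\mathcal{B}$ would have a strictly larger Boolean extension. Note that Step 2 does not use the cycle hypothesis at all; that hypothesis is needed only to invoke Proposition~\ref{prop. 4.6} in Step 1.
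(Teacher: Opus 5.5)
Your proposal is correct and matches the paper's proof: secure $\triangle,\triangledown\in\mathcal{B}$ via Corollary~\ref{cor. 2.3}, then get $\alpha=\pi_A$ from Proposition~\ref{prop. 4.6} and $|A|=2$ from Lemma~\ref{lemma 4.3}, each time contradicting maximality. You only spell out the two steps that the paper compresses into ``a common consequence of Proposition~\ref{prop. 4.6} and Lemma~\ref{lemma 4.3}.''
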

\begin{proof} Since $\mathcal{B}$ is a maximal Boolean sublattice
of Part$(U)$, in view of Corollary~\ref{cor. 2.3} we have $\triangle$,$\triangledown
\in\mathcal{B}$. Now, as a common consequence of Proposition~\ref{prop. 4.6} and Lemma~\ref{lemma 4.3}
we obtain the required result. 
\end{proof}

\medskip

\begin{remark}\label{rem. 4.8} We already know several properties of a Boolean
sublattice $\mathcal{B}$ of Part$(U)$ which is maximal but not of the largest
possible size. First, $\mathcal{B}$ contains $\bigtriangleup$ and
$\triangledown$, and consequently the hypergraph $\mathcal{H}\left(
A(\mathcal{B})\right)  $ is connected, and secondly, $\mathcal{H}\left(
A(\mathcal{B})\right)  $ contains at least one cycle. If $\alpha$ is an atom
of $\mathcal{B}$, then either $\alpha$ is a principal partition with a two
element block (and hence, it is an atom in Part$(U)$ also), or each
non-singleton block of $\alpha$ is included in some cycle of $\mathcal{H}%
\left(  A(\mathcal{B})\right)  $, and any such cycle contains at least two
non-singleton blocks of $\alpha$.
\end{remark}

\medskip

\begin{example}\label{ex. 4.9} If $\mid U\mid=5$, then the maximal Boolean
sublattices of Part$(U)$ have $8$ or $16$ elements. Indeed, by Proposition~\ref{prop. 1.4}, the largest size Boolean sublattices of Part$(U)$ have $2^{4}=16$ elements.
\smallskip

\noindent Let $\mathcal{B}=\{\triangle,\alpha,\beta,\triangledown\}$ be a
Boolean sublattice of Part$(U)$ with size $4$. Then $\alpha,\beta$ are
complements each of other. Assume by contradiction that $\mathcal{B}$ is a
maximal Boolean sublattice of Part$(U)$ and let $\mathcal{H}$ be the
hypergraph assigned to the set $\{\alpha,\beta\}$. Then, in view of Remark~\ref{rem. 4.8}, $\mathcal{H}$ is connected and has a cycle $\mathcal{C}$ that contains at
least two non-singleton blocks of both partitions, so it covers at least $4$
points $a,b,c,d$ of $U$. 

Let $e$ be fifth element of $U$. Then $\{e\}$ can be
a block of at most one of them, otherwise $\mathcal{H}$ would not be
connected. The case when $\alpha$ has a $3$-element block $A=\{a,b,c\}$ and
$\beta$ has a $3$-element block $B$, can also be excluded, since then $\mid
A\cap B\mid=1$ would imply that the second non-singleton block $\{d,e\}$ of
$\alpha$ is included in $B$, contradicting $\alpha\wedge\beta=\triangle$. Thus
w.l.o.g. we may assume that $\alpha$ has as blocks $\{a,b\}$, $\{c,d\}$,
$\{e\}$ and $\beta$ has two blocks $B_{1},B_{2}$, where $\mid B_{1}\mid=2$ and
$\mid B_{2}\mid=3$, and the non-singleton blocks of them form the cycle
$\mathcal{C}$. By symmetry, we can suppose $B_{1}=\{a,c\}$, $B_{2}=\{b,d,e\}$.
Then $\beta$ is a join of the partitions $\ \beta_{1}=\pi_{\{a,c\}}\vee
\pi_{\{b,d\}}$ and $\beta_{2}=\pi_{\{d,e\}}$. It is easy to check that
$\{\alpha,\beta_{1},\beta_{2}\}$ is an independent set in Part$(U)$. Hence the
Boolean lattice $\mathcal{B}^{\ast}$ induced by $\{\alpha,\beta_{1},\beta
_{2}\}$ is a proper extension of $\mathcal{B}$, contrary to our assumption.

Now consider the hypergraph $\mathcal{H}^{\ast\text{ }}$corresponding to
$\{\alpha,\beta_{1},\beta_{2}\}$. Since $\mathcal{H}^{\ast\text{ }}$contains
the cycle $\mathcal{C}$, $\mathcal{B}^{\ast}$ can not be extended to a largest
size Boolean sublattice of Part$(U)$ with $16$ elements. As $\mathcal{B}%
^{\ast}$ has $8$ elements, $\mathcal{B}^{\ast}$ is a maximal Boolean
sublattice of Part$(U)$.
\end{example}

\medskip

\begin{lemma}\label{lemma 4.10} Let $\mid U\mid=n\geq3$.

\noindent(i) If $n=3k$ or $n=3k+1$, where $k\in\mathbb{N}$, then there exists
a maximal Boolean sublattice of Part$(U)$ with $4$ elements.

\noindent(ii) If $n=3k$ with $k\geq2$, $k\in\mathbb{N}$, then there exists a
maximal Boolean sublattice of Part$(U)$ with $8$ elements.
\end{lemma}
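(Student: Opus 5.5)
We will prove both parts by explicit construction, and then verify maximality through the cycle criterion of Proposition~\ref{prop. 3.3}. By Corollary~\ref{cor. 2.3}, a maximal Boolean sublattice contains $\triangle$ and $\triangledown$. By Corollary~\ref{cor 2.7}, it suffices to exclude immediate extensions. So for a candidate $\mathcal{B}$ we must show the following: no atom $\alpha\in A(\mathcal{B})$ can be written as $\alpha=\delta\vee\delta'$ with $\delta\wedge\delta'=\triangle$ and $(A(\mathcal{B})\setminus\{\alpha\})\cup\{\delta,\delta'\}$ independent.

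\emph{General obstruction for splitting.} Suppose $\alpha=\delta\vee\delta'$ with $\delta\wedge\delta'=\triangle$. Then every block of $\delta$ or $\delta'$ lies inside a block of $\alpha$. Moreover, inside each non-singleton block $A$ of $\alpha$, the non-singleton blocks of $\delta,\delta'$ contained in $A$ form a connected linear hypergraph covering $A$.

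Take a cycle $\mathcal{C}$ of $\mathcal{H}(A(\mathcal{B}))$. Replace each edge of $\mathcal{C}$ that is a block of $\alpha$ by a path of $\delta$- and $\delta'$-blocks, exactly as in the proof of Lemma~\ref{lemma 4.2}. This yields a cycle $\mathcal{C}^{\ast}$ in the new hypergraph. By Proposition~\ref{prop. 3.3}, $\mathcal{C}^{\ast}$ must use at least two non-singleton blocks of $\delta$ and at least two of $\delta'$. The constructions will be chosen so that this fails for every possible splitting. Since $\delta$ and $\delta'$ both have to occur somewhere, some block $A$ of $\alpha$ contains a $\delta$-block $D$. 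We want a cycle through $A$ that can be rerouted to meet $\delta$ only in $D$, or $\delta'$ only once. Typically this is done by shortcutting through the remaining atoms and using that $A$ is small, for instance $|A|=3$, so that $A$ splits as two blocks meeting in one point, or $A$ is not split at all.

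\emph{Part (i).} For $n=3k$, write $U=\{x_i,y_i,z_i: i\in\mathbb{Z}_k\}$ and let $\alpha$ have the triples $\{x_i,y_i,z_i\}$ as blocks. For $\beta$ we take a transversal family of triples, e.g.\ $\{x_i,y_{i+1},z_{i+2}\}$ (indices mod $k$), adjusting for small $k$. We then check three things.
\begin{itemize}
\item[(1)] $\alpha\wedge\beta=\triangle$, i.e.\ each $\alpha$-block meets each $\beta$-block in at most one point.
\item[(2)] $\alpha\vee\beta=\triangledown$, i.e.\ $\mathcal{H}(\{\alpha,\beta\})$ is connected (Remark~\ref{rem. 3.2}).
\item[(3)] Every point lies on short cycles alternating $\alpha$- and $\beta$-blocks, with every $\alpha$-block and every $\beta$-block on such a cycle.
\end{itemize}
Independence of $\{\alpha,\beta\}$ is automatic for two disjoint complementary elements. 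For $k=1$ the $4$-element lattice $\{\triangle,\pi_{\{a,b\}},\pi_{\{b,c\}},\triangledown\}$ has largest size, by Proposition~\ref{prop. 1.4}, so it is trivially maximal. For $n=3k+1$ we adjoin an extra point $w$ to one $\beta$-block, making it of size $4$, and check (1)--(3) again.

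Maximality then reduces to the obstruction above. A split of a $3$-element block is forced to be into two pairs meeting in one point, or the block is kept whole. Using the alternating cycles, we exhibit a cycle in which one of $\delta,\delta'$ contributes a single block, contradicting Proposition~\ref{prop. 3.3}.

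\emph{Part (ii).} For $n=3k$ with $k\ge2$ we imitate Example~\ref{ex. 4.9}. We start from the $4$-element configuration of (i) and split one atom into two partitions $\beta_1,\beta_2$ chosen by hand. Independence of $\{\alpha,\beta_1,\beta_2\}$ is verified through the circuit condition of Theorem~\ref{Thm. 3.5}, using Remark~\ref{rem. 3.6} to restrict to cycles. Maximality is shown by the same splitting obstruction, applied now to each of the three atoms.

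The main obstacle is the maximality verification. A splitting $\delta,\delta'$ of an atom is an arbitrary pair of partitions, not necessarily grouped blockwise, so the case analysis must genuinely cover all of them. The first thing to try is to show that every admissible split of a $3$-element block creates a cycle with a lone $\delta$- or $\delta'$-block. If this fails for some configurations, the fallback is to alter the choice of $\beta$ so that all cycles are as short as possible, e.g.\ every pair of blocks lying on a $4$-cycle.
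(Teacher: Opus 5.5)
\textbf{There is a genuine gap.} You flag the maximality verification as the main obstacle, but you never carry it out, and for the witness you propose in (i) it is actually false.

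\emph{Counterexample at $k=3$.} Here $i,i+1,i+2$ exhaust $\mathbb{Z}_3$, so each $\beta_i=\{x_i,y_{i+1},z_{i+2}\}$ meets each $\alpha$-triple in exactly one point. Thus $U$ is a $3\times3$ grid whose rows are the blocks of $\alpha$ and whose columns are $\beta_0,\beta_1,\beta_2$. In every row, let $\delta$ join the two points lying in $\beta_0\cup\beta_1$, and let $\delta'$ join the two points lying in $\beta_1\cup\beta_2$. Then:
\begin{itemize}
\item $\delta\wedge\delta'=\triangle$ and $\delta\vee\delta'=\alpha$;
\item $\delta\vee\beta$ has blocks $\beta_0\cup\beta_1,\ \beta_2$, and $\delta'\vee\beta$ has blocks $\beta_0,\ \beta_1\cup\beta_2$;
\item hence $(\delta\vee\beta)\wedge(\delta'\vee\beta)=\beta$, $\alpha\wedge(\delta\vee\beta)=\delta$, $\alpha\wedge(\delta'\vee\beta)=\delta'$;
\item and $\delta\wedge(\delta'\vee\beta)=\delta'\wedge(\delta\vee\beta)=\beta\wedge\alpha=\triangle$.
\end{itemize}
So $\{\delta,\delta',\beta\}$ is independent, and $\{\triangle,\alpha,\beta,\triangledown\}$ sits inside an $8$-element Boolean sublattice.

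\emph{The same happens for larger $k$.} For $k=4$, the block $\{x_j,y_j,z_j\}$ meets $\beta_j,\beta_{j-1},\beta_{j-2}$. Let $\delta$ join the two of its points whose $\beta$-indices lie in a common class of $\{\{0,1\},\{2,3\}\}$, and $\delta'$ the two lying in a common class of $\{\{0,2\},\{1,3\}\}$. Then $\delta\vee\beta$ has blocks $\beta_0\cup\beta_1,\beta_2\cup\beta_3$ and $\delta'\vee\beta$ has blocks $\beta_0\cup\beta_2,\beta_1\cup\beta_3$, and the same checks go through. Similar row/column splittings exist for all larger $k$. Adding a point $w$ to a $\beta$-block for $n=3k+1$ changes nothing, since $w$ is a singleton of $\alpha$.

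\emph{Why your obstruction fails.} A $3$-element block can always be split into two pairs, and the rerouted cycles then contain two $\delta$-blocks and two $\delta'$-blocks. Proposition~\ref{prop. 3.3} allows this: it only requires at least two blocks from each partition that actually occurs on the cycle, not from both $\delta$ and $\delta'$. Your fallback of putting all pairs of blocks on $4$-cycles pushes toward exactly this grid, which is the most splittable case.

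\emph{Part (ii) as planned contradicts (i).} If the $4$-element configuration of (i) is maximal, then by definition no atom of it splits into $\beta_1,\beta_2$ with $\{\alpha,\beta_1,\beta_2\}$ independent. In Example~\ref{ex. 4.9} the lattice that gets split is a non-maximal one.

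\emph{The missing idea} is to make almost all blocks $2$-element, hence unsplittable. The paper takes $\alpha$ with blocks $A=\{a_1,\dots,a_k\}$ and $E_i=\{b_i,c_i\}$, and $\beta$ with blocks $C=\{c_1,\dots,c_k\}$ and $F_i=\{a_i,b_i\}$. In a split $\alpha=\alpha_1\vee\alpha_2$, each $E_i$ goes whole to one part. Consider the $6$-cycles $a_s,A_1,a_t,F_t,b_t,E_t,c_t,C,c_s,E_s,b_s,F_s,a_s$, where $A_1$ is a piece of $A$ containing $a_s,a_t$. The only non-$\beta$ edges are $A_1,E_s,E_t$, so Corollary~\ref{cor. 3.4}(ii) forces $E_s,E_t$ into the part containing $A_1$. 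This yields $\alpha_1=\alpha$ if $A$ is unsplit, and puts some $E_t$ into both parts if $A$ is covered by overlapping pieces. The case $n=3k+1$ adds one point $d$ to both $A$ and $C$.

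For (ii), the paper builds three atoms $\alpha,\beta,\gamma$ directly, with $\gamma$ having only the blocks $\{a_{k-1},a_k\}$ and $\{c_{k-1},c_k\}$. It excludes splits of $\gamma$ via the $6$-cycle through $a_{k-1},a_k,b_k,c_k,c_{k-1},b_{k-1}$, and splits of $\alpha$ (or $\beta$) by reduction to case (a).

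Your recipe also has two smaller gaps:
\begin{itemize}
\item It is unspecified for $k=2$, where $\{x_i,y_{i+1},z_{i+2}\}$ meets $\{x_i,y_i,z_i\}$ in two points.
\item It does not give $n=4$; the paper uses Example~\ref{ex. 4.1} there.
\end{itemize}
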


\begin{proof} (i) is clear if $U$ has three elements, since then
Part$(U)\cong M_{3}$. For $\mid U\mid=4$, (i) also holds according to Example~\ref{ex. 4.1}. Thus we may assume that $\mid U\mid=n\geq6$. We will distinguish two
cases: \smallskip

\noindent Case (a). Let $n=3k$, where $k\in\mathbb{N}$, $k\geq2$. We divide
$U$ into three sets $A=\{a_{1},...,a_{k}\}$, $B=\{b_{1},...,b_{k}\}$ and
$C=\{c_{1},...,c_{k}\}$. Let $\alpha$ be the partition having as blocks $A$
and $E_{1}=\{b_{1},c_{1}\},...,E_{k}=\{b_{k},c_{k}\}$, and $\beta$ the
partition with the blocks $C$ and $F_{1}=\{a_{1},b_{1}\},...,F_{k}%
=\{a_{k},b_{k}\}$, as in Fig. 4.1.

\begin{figure}[H]
\centering
\begin{adjustbox}{width=\textwidth}
\begin{tikzpicture}
\coordinate (a1) at (0, 4);
\coordinate (ai) at (2, 4);
\coordinate (ai+1) at (3, 4);
\coordinate (ak) at (5, 4);
\coordinate (b1) at (0, 2);
\coordinate (bi) at (2, 2);
\coordinate (bi+1) at (3, 2);
\coordinate (bk) at (5, 2);
\coordinate (c1) at (0, 0);
\coordinate (ci) at (2, 0);
\coordinate (ci+1) at (3, 0);
\coordinate (ck) at (5, 0);
\coordinate (a2i) at (7, 4);
\coordinate (a2i+1) at (8, 4);
\coordinate (b2i) at (7, 2);
\coordinate (b2i+1) at (8, 2);
\coordinate (c2i) at (7, 0);
\coordinate (c2i+1) at (8, 0);
\coordinate (as) at (10, 4);
\coordinate (at) at (12, 4);
\coordinate (av) at (14, 4);
\coordinate (bs) at (10, 2);
\coordinate (bt) at (12, 2);
\coordinate (bv) at (14, 2);
\coordinate (cs) at (10, 0);
\coordinate (ct) at (12, 0);
\coordinate (cv) at (14, 0);
\coordinate (bici) at (7.5, -1.2);
\coordinate (btct) at (12.5, -1.2);
\draw (a1) -- node[midway, above = -0.5mm] {$\alpha$} (ai) -- node[midway, below = -0.5mm] {$\alpha$} (ai+1) -- node[midway, above = -0.5mm] {$\alpha$} (ak);
\draw (a2i) -- node[midway, circle, draw, inner sep=2pt, below = 0.5mm] {$\alpha_1$} (a2i+1);
\draw[decorate, decoration={snake, amplitude=0.5mm, segment length = 1mm, pre length=0pt,
post length=0pt}]
(as) -- node[midway, circle, draw, inner sep=2pt, above = 1mm] {$\alpha_1$} (at);
\draw[decorate, decoration={snake, amplitude=0.5mm, segment length = 1mm, pre length=0pt,
post length=0pt}]
(at) -- node[midway, circle, draw, inner sep=2pt, above = 1mm] {$\alpha_2$} (av);
\draw[color=red] (c1) -- node[color=red, midway, below = -0.5mm] {$\beta$} (ci) -- node[color=red, midway, above = -0.5mm] {$\beta$} (ci+1) -- node[color=red, midway, below = -0.5mm] {$\beta$} (ck);
\draw[color=red] (c2i) -- node[color=red, midway, above = -0.5mm] {$\beta$} (c2i+1);
\draw[color=red] (cs) -- node[color=red, midway, below = -0.5mm] {$\beta$} (ct) -- node[color=red, midway, below = -0.5mm] {$\beta$} (cv);
\draw (c1) -- node[midway, right = -0.5mm] {$\alpha$} node[midway, right = 6mm] {$\cdots$} (b1);
\draw (ci) -- node[midway, left = -0.5mm] {$\alpha$} (bi);
\draw (ci+1) -- node[midway, right = -0.5mm] {$\alpha$} node[midway, right = 6mm] {$\cdots$} (bi+1);
\draw (ck) -- node[midway, left = -0.5mm] {$\alpha$} (bk);
\draw (c2i) -- node[midway, circle, draw, inner sep=2pt, left = 0.5mm] {$\alpha_1$} (b2i);
\draw (c2i+1) -- node[midway, circle, draw, inner sep=2pt, right = 0.5mm] {$\alpha_1$}(b2i+1);
\draw (cs) -- node[midway, circle, draw, inner sep=2pt, right = 0.5mm] {$\alpha_1$} (bs);
\draw (ct) -- node[midway, circle, draw, inner sep=2pt, left = 0.75mm] {$\alpha_1$} node[midway, circle, draw, inner sep=2pt, right = 0.75mm] {$\alpha_2$} node[midway, above = 0.2mm] {\Large\textbf{?}} (bt);
\draw (cv) -- node[midway, circle, draw, inner sep=2pt, left = 0.5mm] {$\alpha_2$} (bv);
\draw[color=red] (a1) -- node[color=red, midway, right = -0.5mm] {$\beta$} node[color=red, midway, right = 6mm] {$\cdots$} (b1);
\draw[color=red] (ai) -- node[color=red, midway, left = -0.5mm] {$\beta$} (bi);
\draw[color=red] (ai+1) -- node[color=red, midway, right = -0.5mm] {$\beta$} node[color=red, midway, right = 6mm] {$\cdots$} (bi+1);
\draw[color=red] (ak) -- node[color=red, midway, left = -0.5mm] {$\beta$} (bk);
\draw[color=red] (a2i) -- node[color=red, midway, left = -0.5mm] {$\beta$} (b2i);
\draw[color=red] (a2i+1) -- node[color=red, midway, right = -0.5mm] {$\beta$} (b2i+1);
\draw[color=red] (as) -- node[color=red, midway, left = -0.5mm] {$\beta$} (bs);
\draw[color=red] (at) -- node[color=red, midway, right = -0.5mm] {$\beta$} (bt);
\draw[color=red] (av) -- node[color=red, midway, left = -0.5mm] {$\beta$} (bv);
\draw node[above = 1mm of a1] {$a_1$};
\draw node[above = 1mm of ai] {$a_i$};
\draw node[above = 1mm of ai+1] {$a_{i+1}$};
\draw node[above = 1mm of ak] {$a_k$};
\draw node[above = 1mm of a2i] {$a_i$};
\draw node[above = 1mm of a2i+1] {$a_{i+1}$};
\draw node[above = 1mm of as] {$a_s$};
\draw node[above = 1mm of at] {$a_t$};
\draw node[above = 1mm of av] {$a_v$};
\draw node[below = 1mm of c1] {$c_1$};
\draw node[below = 1mm of ci] {$c_i$};
\draw node[below = 1mm of ci+1] {$c_{i+1}$};
\draw node[below = 1mm of ck] {$c_k$};
\draw node[below = 1mm of c2i] {$c_i$};
\draw node[below = 1mm of c2i+1] {$c_{i+1}$};
\draw node[below = 1mm of cs] {$c_s$};
\draw node[below = 1mm of ct] {$c_t$};
\draw node[below = 1mm of cv] {$c_v$};
\draw node[left = 1mm of b1] {$b_1$};
\draw node[left = 1mm of bi] {$b_i$};
\draw node[right = 1mm of bi+1] {$b_{i+1}$};
\draw node[right = 1mm of bk] {$b_k$};
\draw node[left = 1mm of b2i] {$b_i$};
\draw node[right = 1mm of b2i+1] {$b_{i+1}$};
\draw node[left = 1mm of bs] {$b_s$};
\draw node[left = 1mm of bt] {$b_t$};
\draw node[right = 1mm of bv] {$b_v$};
\draw (bici) node {$(b_i, c_i) \in \alpha_1, 1 \le i \le k$};
\draw (btct) node {$(b_t, c_t) \in \alpha_1 \wedge \alpha_2\text{ }?$};
\draw[fill=black] (a1) circle [radius=2pt];
\draw[fill=black] (ai) circle [radius=2pt];
\draw[fill=black] (ai+1) circle [radius=2pt];
\draw[fill=black] (ak) circle [radius=2pt];
\draw[fill=black] (b1) circle [radius=2pt];
\draw[fill=black] (bi) circle [radius=2pt];
\draw[fill=black] (bi+1) circle [radius=2pt];
\draw[fill=black] (bk) circle [radius=2pt];
\draw[fill=black] (c1) circle [radius=2pt];
\draw[fill=black] (ci) circle [radius=2pt];
\draw[fill=black] (ci+1) circle [radius=2pt];
\draw[fill=black] (ck) circle [radius=2pt];
\draw[fill=black] (a2i) circle [radius=2pt];
\draw[fill=black] (a2i+1) circle [radius=2pt];
\draw[fill=black] (b2i) circle [radius=2pt];
\draw[fill=black] (b2i+1) circle [radius=2pt];
\draw[fill=black] (c2i) circle [radius=2pt];
\draw[fill=black] (c2i+1) circle [radius=2pt];
\draw[fill=black] (as) circle [radius=2pt];
\draw[fill=black] (at) circle [radius=2pt];
\draw[fill=black] (av) circle [radius=2pt];
\draw[fill=black] (bs) circle [radius=2pt];
\draw[fill=black] (bt) circle [radius=2pt];
\draw[fill=black] (bv) circle [radius=2pt];
\draw[fill=black] (cs) circle [radius=2pt];
\draw[fill=black] (ct) circle [radius=2pt];
\draw[fill=black] (cv) circle [radius=2pt];
\draw[rounded corners, blue] ([xshift=-3mm,yshift=7mm]a1) rectangle ([xshift=3mm,yshift=-5mm]ak);
\draw[rounded corners, green] ([xshift=-7mm,yshift=7mm]b1) rectangle ([xshift=7mm,yshift=-3mm]bk);
\draw[rounded corners, red] ([xshift=-3mm,yshift=7mm]c1) rectangle ([xshift=3mm,yshift=-7mm]ck);
\draw[rounded corners, blue] ([xshift=-3mm,yshift=9mm]as) rectangle ([xshift=3mm,yshift=-3mm]at);
\draw[rounded corners, blue] ([xshift=-3mm,yshift=10mm]at) rectangle ([xshift=3mm,yshift=-4mm]av);
\draw[blue] node[above left = 5mm of a1] {$A$};
\draw[green] node[above left = 8mm of b1] {$B$};
\draw[red] node[above left = 5mm of c1] {$C$};
\draw[blue] node[above left = 5mm of as] {$A_1$};
\draw[blue] node[above right = 5mm of av] {$A_2$};
\end{tikzpicture}
\end{adjustbox}
\caption{Case (a)}%
\label{fig:casea}%
\end{figure}

Since $\alpha\wedge\beta=\triangle$ and $\alpha\vee\beta=\triangledown$,
$\mathcal{B=\{\triangle},\alpha,\beta,\triangledown\}$ is a Boolean sublattice
of Part$(U)$ with $4$ elements and $\alpha,\beta\in A(\mathcal{B})$. We show
that $\mathcal{B}$ has no proper Boolean extension in Part$(U)$. Indeed, in
view of Corollary~\ref{cor 2.7}, any immediate Boolean extension $\mathcal{B}%
^{\#}\subseteq\ $Part$(U)$ of $\mathcal{B}$ is obtained by replacing $\alpha$
or $\beta$ by the join of two atoms of $\mathcal{B}^{\#}$. W.l.o.g. we may
assume that $\alpha=\alpha_{1}\vee\alpha_{2}$, $\alpha_{1}\wedge\alpha
_{2}=\triangle$, for some $\alpha_{1},\alpha_{2}\in A(\mathcal{B}^{\#})$. Then
each non-singleton block of $\alpha$ must be a union of some non-singleton
blocks of $\alpha_{1}$ and $\alpha_{2}$. As the blocks $E_{1},...,E_{k}$ have
two elements, for them this union is trivial, i.e. some of them should be
blocks of $\alpha_{1}$, and the rest of them should be blocks of $\alpha_{2}$.

First, we show that $A$ can not be a block of $\alpha_{1}$ or $\alpha_{2}$.
Due to the symmetry of our construction, it is enough to only deal with the
case when $A$ is a block of $\alpha_{1}$. Then for each $i\in\{1,...,k-1\}$
the cycle $\mathcal{C}$ determined by the points $a_{i},b_{i},c_{i}%
,c_{i+1},b_{i+1},a_{i+1}$, contains only three edges that are not blocks of
$\beta$, namely $E_{i}=\{b_{i},c_{i}\}$, $E_{i+1}=\{b_{i+1},c_{i+1}\}$, and
$A$ that connects $a_{i}$ with $a_{i+1}$. As $A$ is a block of $\alpha_{1}\in
A(\mathcal{B}^{\#})$ and $\mathcal{C}$ is included in the hypergraph
$\mathcal{H}(A(\mathcal{B}^{\#}))$, in view of Corollary~\ref{cor. 3.4}(ii) $E_{i}$ and
$E_{i+1}$should be also blocks of $\alpha_{1}$. This means that all
$E_{1},...,E_{k}$ are blocks of $\alpha_{1}$, and hence we obtain $\alpha
_{1}=\alpha$. However, this is a contradiction, since it yields $\alpha_{2}=\triangle$.

Now assume that $A$ is the union of some overlapping blocks of $\alpha_{1}$
and $\alpha_{2}$. Then there exist at least two subsets $A_{1},A_{2}\subseteq
A$ with $A_{1}\cap A_{2}\neq\emptyset$, $A_{1}\setminus A_{2}\neq\emptyset$,
$A_{2}\setminus A_{1}\neq\emptyset$, and such that $A_{1}$ is a block of
$\alpha_{1}$ and $A_{2}$ is a block of $\alpha_{2}$. Let $a_{s}\in
A_{1}\setminus A_{2}$, $a_{t}\in A_{1}\cap A_{2}$ and $a_{v}\in A_{2}\setminus
A_{1}$, where $s,t,v\in\{1,...,k\}$. W.l.o.g. we may assume $s<t<v$. Let us
consider the cycles

\smallskip

$\mathcal{C}_{1}\colon$ $a_{s},A_{1},a_{t},F_{t},b_{t},E_{t},c_{t}%
,C,c_{s},E_{s},b_{s},F_{s},a_{s}$ and

$\mathcal{C}_{2}\colon$ $a_{t},A_{2},a_{v},F_{v},b_{v},E_{v},c_{v}%
,C,c_{t},E_{t},b_{t},F_{t},a_{t}$,

\smallskip

\noindent on Fig. 4.1. Since all the blocks of $\mathcal{C}_{1}$ and
$\mathcal{C}_{2}$ belong to the partitions $\alpha_{1},\alpha_{2}$ and $\beta$
which are atoms in $\mathcal{B}^{\#}$, $\mathcal{C}_{1}$ and $\mathcal{C}_{2}$
are included in the hypergraph $\mathcal{H}(A(\mathcal{B}^{\#}))$. Since in
$\mathcal{C}_{1},$ only $A_{1},E_{t},E_{s}$ are not blocks of $\beta$ and
$A_{1}$ is a block of $\alpha_{1}$, in view of Corollary~\ref{cor. 3.4}(ii), $E_{t}%
,E_{s}$ also should be blocks of $\alpha_{1}$. Similarly, since in
$\mathcal{C}_{2}$ only the blocks $A_{2},E_{v},E_{t}$ are not included in
$\beta$ but $A_{2}$ is a block of $\alpha_{2}$, we get that $E_{v},E_{t}$
also should be blocks of $\alpha_{2}$. Thus $E_{t}=\{b_{t},c_{t}\}$ must be a
common block of $\alpha_{1}$ and $\alpha_{2}$, which is a contradiction to
$\alpha_{1}\wedge\alpha_{2}=\triangle$.
\medskip

\noindent Case (b). Let $n=3k+1$, with $k\in\mathbb{N}$, $k\geq2$. Now we
include all the elements of $U$ into the subsets $A=\{a_{1},...,a_{k},d\}$,
$B=\{b_{1},...,b_{k}\}$, and $C=\{c_{1},...,c_{k},d\}$ and we denote
$a_{k+1}=c_{k+1}:=d$. We define the partitions $\alpha,\beta\in\ $Part$(U)$
such that the blocks of $\alpha$ are $A$ and $E_{1}=\{b_{1},c_{1}%
\},...,E_{k}=\{b_{k},c_{k}\}$, and the blocks of $\beta$ are $C$ and
$F_{1}=\{a_{1},b_{1}\},...,F_{k}=\{a_{k},b_{k}\}$, as in Fig. 4.2.
\begin{figure}[H]
\centering
\begin{adjustbox}{width=\textwidth}
\begin{tikzpicture}
\coordinate (a1) at (0, 4);
\coordinate (ai) at (2, 4);
\coordinate (ai+1) at (3, 4);
\coordinate (ak) at (5, 4);
\coordinate (b1) at (0, 2);
\coordinate (bi) at (2, 2);
\coordinate (bi+1) at (3, 2);
\coordinate (bk) at (5, 2);
\coordinate (d) at (7, 2);
\coordinate (c1) at (0, 0);
\coordinate (ci) at (2, 0);
\coordinate (ci+1) at (3, 0);
\coordinate (ck) at (5, 0);
\coordinate (as) at (10, 4);
\coordinate (at) at (12, 4);
\coordinate (bs) at (10, 2);
\coordinate (bt) at (12, 2);
\coordinate (bv) at (15, 2);
\coordinate (cs) at (10, 0);
\coordinate (ct) at (12, 0);
\coordinate (btct) at (12.5, -1.2);
\draw (a1) -- node[midway, above = -0.5mm] {$\alpha$} (ai) -- node[midway, below = -0.5mm] {$\alpha$} (ai+1) -- node[midway, above = -0.5mm] {$\alpha$} (ak);
\draw[decorate, decoration={snake, amplitude=0.5mm, segment length = 1mm, pre length=0pt,
post length=0pt}]
(as) -- node[midway, circle, draw, inner sep=2pt, above = 1mm] {$\alpha_1$} (at);
\draw[color=red] (c1) -- node[color=red, midway, below = -0.5mm] {$\beta$} (ci) -- node[color=red, midway, above = -0.5mm] {$\beta$} (ci+1) -- node[color=red, midway, below = -0.5mm] {$\beta$} (ck);
\draw[color=red] (cs) -- node[color=red, midway, below = -0.5mm] {$\beta$} (ct);
\draw (c1) -- node[midway, right = -0.5mm] {$\alpha$} node[midway, right = 6mm] {$\cdots$} (b1);
\draw (ci) -- node[midway, left = -0.5mm] {$\alpha$} (bi);
\draw (ci+1) -- node[midway, right = -0.5mm] {$\alpha$} node[midway, right = 6mm] {$\cdots$} (bi+1);
\draw (ck) -- node[midway, left = -0.5mm] {$\alpha$} (bk);
\draw (cs) -- node[midway, circle, draw, inner sep=2pt, right = 0.5mm] {$\alpha_1$} (bs);
\draw (ct) -- node[midway, circle, draw, inner sep=2pt, left = 0.75mm] {$\alpha_1$} node[midway, circle, draw, inner sep=2pt, right = 0.75mm] {$\alpha_2$} node[midway, above = 0.2mm] {\Large\textbf{?}} (bt);
\draw[color=red] (a1) -- node[color=red, midway, right = -0.5mm] {$\beta$} node[color=red, midway, right = 6mm] {$\cdots$} (b1);
\draw[color=red] (ai) -- node[color=red, midway, left = -0.5mm] {$\beta$} (bi);
\draw[color=red] (ai+1) -- node[color=red, midway, right = -0.5mm] {$\beta$} node[color=red, midway, right = 6mm] {$\cdots$} (bi+1);
\draw[color=red] (ak) -- node[color=red, midway, left = -0.5mm] {$\beta$} (bk);
\draw[color=red] (as) -- node[color=red, midway, left = -0.5mm] {$\beta$} (bs);
\draw[color=red] (at) -- node[color=red, midway, right = -0.5mm] {$\beta$} (bt);
\draw[color=red] (ck) -- node[color=red, midway, right = 0.5mm] {$\beta$} (d);
\draw (ak) -- node[midway, above = 0.5mm] {$\alpha$} (d);
\draw[decorate, decoration={snake, amplitude=0.5mm, segment length = 1mm, pre length=0pt,
post length=0pt}]
(at) -- node[midway, circle, draw, inner sep=2pt, above = 2mm] {$\alpha_2$} (bv);
\draw[color=red, decorate, decoration={snake, amplitude=0.5mm, segment length = 1mm, pre length=0pt,
post length=0pt}] (ct) -- node[color=red, midway, below = -0.5mm] {$\beta$} (bv);
\draw node[above = 1mm of a1] {$a_1$};
\draw node[above = 1mm of ai] {$a_i$};
\draw node[above = 1mm of ai+1] {$a_{i+1}$};
\draw node[above = 1mm of ak] {$a_k$};
\draw node[above = 1mm of as] {$a_s$};
\draw node[above = 1mm of at] {$a_t$};
\draw node[left = 1mm of b1] {$b_1$};
\draw node[left = 1mm of bi] {$b_i$};
\draw node[right = 1mm of bi+1] {$b_{i+1}$};
\draw node[left = 1mm of bk] {$b_k$};
\draw node[above = 1mm of d] {$d$};
\draw node[left = 1mm of bs] {$b_s$};
\draw node[left = 1mm of bt] {$b_t$};
\draw node[right = 1mm of bv] {$d = a_v = b_v$};
\draw node[below = 1mm of c1] {$c_1$};
\draw node[below = 1mm of ci] {$c_i$};
\draw node[below = 1mm of ci+1] {$c_{i+1}$};
\draw node[below = 1mm of ck] {$c_k$};
\draw node[below = 1mm of cs] {$c_s$};
\draw node[below = 1mm of ct] {$c_t$};
\draw (btct) node {$(b_t, c_t) \in \alpha_1 \wedge \alpha_2\text{ }?$};
\draw[fill=black] (a1) circle [radius=2pt];
\draw[fill=black] (ai) circle [radius=2pt];
\draw[fill=black] (ai+1) circle [radius=2pt];
\draw[fill=black] (ak) circle [radius=2pt];
\draw[fill=black] (b1) circle [radius=2pt];
\draw[fill=black] (bi) circle [radius=2pt];
\draw[fill=black] (bi+1) circle [radius=2pt];
\draw[fill=black] (bk) circle [radius=2pt];
\draw[fill=black] (d) circle [radius=2pt];
\draw[fill=black] (c1) circle [radius=2pt];
\draw[fill=black] (ci) circle [radius=2pt];
\draw[fill=black] (ci+1) circle [radius=2pt];
\draw[fill=black] (ck) circle [radius=2pt];
\draw[fill=black] (as) circle [radius=2pt];
\draw[fill=black] (at) circle [radius=2pt];
\draw[fill=black] (bs) circle [radius=2pt];
\draw[fill=black] (bt) circle [radius=2pt];
\draw[fill=black] (bv) circle [radius=2pt];
\draw[fill=black] (cs) circle [radius=2pt];
\draw[fill=black] (ct) circle [radius=2pt];
\draw[rounded corners, green] ([xshift=-7mm,yshift=7mm]b1) rectangle ([xshift=7mm,yshift=-3mm]bk);
\draw[rounded corners, blue] ([xshift=-3mm,yshift=9mm]as) rectangle ([xshift=3mm,yshift=-3mm]at);
\begin{scope}[rotate around={-33.69:(at)}]
\draw[rounded corners, blue]
([xshift=-5mm,yshift=9mm]at)
rectangle
([xshift=4.5mm,yshift=-4mm]bv);
\end{scope}
\draw[blue, rounded corners=4pt]
([xshift=-5mm,yshift=6mm]a1)
-- ([xshift=5mm,yshift=6mm]ak)
-- ([xshift=9mm,yshift=2mm]ak)
-- ([xshift=6mm,yshift=4mm]d)
-- ([xshift=-1mm,yshift=-4mm]d)
-- ([xshift=1mm,yshift=-5mm]ak)
-- ([xshift=-5mm,yshift=-5mm]a1)
-- cycle;
\draw[red, rounded corners=4pt]
([xshift=-5mm,yshift=6mm]c1)
-- ([xshift=-1mm,yshift=5mm]ck)
-- ([xshift=3mm,yshift=12mm]d)
-- ([xshift=10mm,yshift=4mm]d)
-- ([xshift=1mm,yshift=-8mm]ck)
-- ([xshift=-5mm,yshift=-8mm]c1)
-- cycle;
\draw[blue] node[above left = 8mm of a1] {$A$};
\draw[green] node[above left = 8mm of b1] {$B$};
\draw[red] node[above left = 5mm of c1] {$C$};
\draw[blue] node[above left = 5mm of as] {$A_1$};
\draw[blue] node[above right = 8mm of bv] {$A_2$};
\end{tikzpicture}
\end{adjustbox}
\caption{Case (b)}%
\label{fig:caseb}%
\end{figure}

Clearly, $\alpha\wedge\beta=\triangle$ and $\alpha\vee\beta=\triangledown$.
Supposing that the Boolean lattice $\mathcal{B=\{\triangle},\alpha
,\beta,\triangledown\}$ has a proper Boolean extension $\mathcal{B}^{\#}$ in
Part$(U)$, we get, as in case (a), that that there are $\alpha_{1},\alpha
_{2}\in A(\mathcal{B}^{\#})$ with $\alpha=\alpha_{1}\vee\alpha_{2}$ and
$\alpha=\alpha_{1}\wedge\alpha_{2}=\triangle$. Then, by repeating the same
argument as in case (a), we can show that $A$ is not a block of $\alpha_{1}$
or $\alpha_{2}$. So there should exist two subsets $A_{1},A_{2}\subseteq A$
and three elements $a_{s},a_{t},a_{v}$ such that $a_{s}\in A_{1}\setminus
A_{2}$, $a_{t}\in A_{1}\cap A_{2}$ and $a_{v}\in A_{2}\setminus A_{1}$, where
$A_{1}$ is a block of $\alpha_{1}$, $A_{2}$ is a block of $\alpha_{2}$, and
$s,t,v\in\{1,...,k+1\}$, $s<t<v$. As $a_{s},a_{t}\in A_{1}$, by examining the
cycle $\mathcal{C}_{1}$ as in case (a), we get that $E_{t}=\{b_{t},c_{t}\}$ is
a block of $\alpha_{1}$.

If $v\neq k+1$, i.e. $a_{v}\neq a_{k+1}=d$, then by using the cycle
$\mathcal{C}_{2}$ as in case (a), we get that $E_{t}$ is also a block of
$\alpha_{2}$, which is a contradiction to $\alpha_{1}\wedge\alpha
_{2}=\triangle$.

Suppose $v=k+1$, i.e. $a_{v}=d$. Then $a_{t},a_{v}\in A_{2}$ and $c_{t},d\in
C$. Hence in the cycle $a_{t},A_{2},d,C,c_{t},E_{t},b_{t},F_{t},a_{t}$ (see
Fig. 4.2) only the edges $A_{2}$ and $E_{t}$ are not blocks of the atom
$\beta$. As $A_{2}$ is a block of $\alpha_{2},$ by Proposition~\ref{prop. 3.3} we get that
$E_{t}$ should be also a block of $\alpha_{2}$, in contradiction to
$\alpha_{1}\wedge\alpha_{2}=\triangle$, again.\medskip

\medskip
\noindent(ii). We partition $U$ into three disjoint sets $A=\{a_{1}%
,...,a_{k}\}$, $B=\{b_{1},...,b_{k}\}$, $C=\{c_{1},...,c_{k}\}$, as in the
case (a) in (i). We define $\alpha,\beta,\gamma\in\ $Part$(U)$ such that the
blocks of $\alpha$ are $A^{\prime}:=\{a_{1},...,a_{k-1}\}$ and $E_{1}%
=\{b_{1},c_{1}\},...,E_{k-1}=\{b_{k-1},c_{k-1}\},$ $E_{k}=\{a_{k},b_{k}\}$,
the blocks of $\beta$ are $C^{\prime}=\{c_{1},...,c_{k-1}\}$ and
$F_{1}=\{a_{1},b_{1}\},...,F_{k-1}=\{a_{k-1},b_{k-1}\},$ $F_{k}=\{b_{k}%
,c_{k}\}$, and the blocks $\gamma$ are $G_{1}=\{a_{k-1},a_{k}\}$,
$G_{2}=\{c_{k-1},c_{k}\}$, respectively any singleton$\{x\}$, $x\notin
G_{1}\cup G_{2}$. Denote $D:=U\setminus\{a_{k},b_{k},c_{k}\}$ (see Fig. 4.3).

\begin{figure}[H]
\centering
\begin{adjustbox}{width=0.4\textwidth}
\begin{tikzpicture}
\coordinate (a1) at (0, 4);
\coordinate (a2) at (1, 4);
\coordinate (ak-1) at (3, 4);
\coordinate (ak) at (4, 4);
\coordinate (b1) at (0, 2);
\coordinate (b2) at (1, 2);
\coordinate (bk-1) at (3, 2);
\coordinate (bk) at (4, 2);
\coordinate (c1) at (0, 0);
\coordinate (c2) at (1, 0);
\coordinate (ck-1) at (3, 0);
\coordinate (ck) at (4, 0);
\draw[color=blue] (ak-1) -- node[pos=0.65, circle, draw, inner sep=0.5pt, below = 0.5mm] {$\gamma$} (ak);
\draw[color=blue] (ck-1) -- node[pos=0.65, circle, draw, inner sep=0.5pt, above = 0.5mm] {$\gamma$} (ck);
\draw (ak) -- node[midway, right = -0.5mm] {$\alpha$} (bk);
\draw (c1) -- node[midway, left = -0.5mm] {$\alpha$} (b1);
\draw (c2) -- node[midway, right = -0.5mm] {$\alpha$} (b2);
\draw (ck-1) -- node[midway, left = -0.5mm] {$\alpha$} (bk-1);
\draw[color=red] (a1) -- node[color=red, midway, left = -0.5mm] {$\beta$} (b1);
\draw[color=red] (a2) -- node[color=red, midway, right = -0.5mm] {$\beta$} (b2);
\draw[color=red] (ak-1) -- node[color=red, midway, left = -0.5mm] {$\beta$} (bk-1);
\draw[color=red] (ck) -- node[color=red, midway, right = -0.5mm] {$\beta$} (bk);
\draw node[above = 1mm of a1] {$a_1$};
\draw node[above = 1mm of a2] {$a_2$};
\draw node[above left = 0.3mm and -3mm of ak-1] {$a_{k-1}$};
\draw node[above = 1mm of ak] {$a_k$};
\draw node[below = 1mm of c1] {$c_1$};
\draw node[below = 1mm of c2] {$c_2$};
\draw node[below left = 0.3mm and -3mm of ck-1] {$c_{k-1}$};
\draw node[below = 1mm of ck] {$c_k$};
\draw node[left = 1mm of b1] {$b_1$};
\draw node[left = 1mm of b2] {$b_2$};
\draw node[below left = 0.3mm and -1mm of bk-1] {$b_{k-1}$};
\draw node[right = 1mm of bk] {$b_k$};
\draw[draw=none] (a2) -- node{$\cdots$} (ak-1);
\draw[draw=none] (b2) -- node[pos=0.4]{$\cdots$} (bk-1);
\draw[draw=none] (c2) -- node{$\cdots$} (ck-1);
\draw[fill=black] (a1) circle [radius=2pt];
\draw[fill=black] (a2) circle [radius=2pt];
\draw[fill=black] (ak-1) circle [radius=2pt];
\draw[fill=black] (ak) circle [radius=2pt];
\draw[fill=black] (b1) circle [radius=2pt];
\draw[fill=black] (b2) circle [radius=2pt];
\draw[fill=black] (bk-1) circle [radius=2pt];
\draw[fill=black] (bk) circle [radius=2pt];
\draw[fill=black] (c1) circle [radius=2pt];
\draw[fill=black] (c2) circle [radius=2pt];
\draw[fill=black] (ck-1) circle [radius=2pt];
\draw[fill=black] (ck) circle [radius=2pt];
\draw[line width = 0.3mm, rounded corners, blue] ([xshift=-5mm,yshift=8mm]a1) rectangle ([xshift=5mm,yshift=-6mm]ak);
\draw[rounded corners, blue] ([xshift=-2.5mm,yshift=6.5mm]a1) rectangle ([xshift=2.5mm,yshift=-4.5mm]ak-1);
\draw[rounded corners, green] ([xshift=-7mm,yshift=5mm]b1) rectangle ([xshift=7mm,yshift=-7mm]bk);
\draw[line width = 0.3mm, rounded corners, red] ([xshift=-5mm,yshift=7mm]c1) rectangle ([xshift=5mm,yshift=-7mm]ck);
\draw[rounded corners, red] ([xshift=-2.5mm,yshift=5.5mm]c1) rectangle ([xshift=2.5mm,yshift=-5.5mm]ck-1);
\draw[line width = 0.5mm, dashed, rounded corners, gray] ([xshift=-8mm,yshift=10mm]a1) rectangle ([xshift=3mm,yshift=-10mm]ck-1);
\draw[blue] node[below right = -1mm and 1mm of a1] {$A'$};
\draw[blue] node[right = 6mm of ak] {$A$};
\draw[green] node[right = 8mm of bk] {$B$};
\draw[red] node[right = 6mm of ck] {$C$};
\draw[red] node[above right = 0.2mm and 2mm of c1] {$C'$};
\draw[gray] node[above left = 8mm and 8mm of b1] {$D$};
\end{tikzpicture}
\end{adjustbox}
\caption{Partitions in statement (ii)}%
\label{fig:casenew}%
\end{figure}

Then $\alpha\wedge\beta=\alpha\wedge\gamma=\beta\wedge\gamma=\triangle$, the
blocks of $\alpha\vee\beta$ are $D$ and $\{a_{k},b_{k},c_{k}\}$, the blocks of
$\alpha\vee\gamma$ are $A\cup\{b_{k}\}$, $E_{1},...,E_{k-1},$ and
$\{b_{k-1},c_{k-1},c_{k}\}$, respectively the blocks of $\beta\vee\gamma$ are
$C\cup\{b_{k}\}$, $F_{1},...,F_{k-1}$, and $\{b_{k-1},a_{k-1},a_{k}\}$. Then
$\alpha\vee\beta\vee\gamma=\triangledown$, and it is easy to check that
$(\alpha\vee\beta)\wedge(\alpha\vee\gamma)=\alpha$, and symmetrically,
$(\alpha\vee\beta)\wedge(\beta\vee\gamma)=\beta$. As the only blocks of
$(\alpha\vee\gamma)\wedge(\beta\vee\gamma)$ are $\{a_{k-1},a_{k}\}$ and
$\{c_{k-1},c_{k}\}$, we obtain $(\alpha\vee\gamma)\wedge(\beta\vee
\gamma)=\gamma$. These relations also imply $(\alpha\vee\beta)\wedge
\gamma=(\beta\vee\gamma)\wedge\alpha=(\alpha\vee\gamma)\wedge\beta=\triangle$.
(For instance, we get $(\alpha\vee\beta)\wedge\gamma=(\alpha\vee\beta
)\wedge(\alpha\vee\gamma)\wedge\gamma=\alpha\wedge\gamma=\triangle$.) Thus the
joins of $\alpha,\beta$ and $\gamma$, together with $\triangle$ form a Boolean
sublattice $\mathcal{B}$ of Part$(U)$ (isomorphic to a Boolean cub). 

\smallskip
Next, consider the hypergraph $\mathcal{H}$ assigned to the blocks of $\alpha,\beta$
and $\gamma$, and assume by contradiction that $\mathcal{B}$ has a proper
Boolean extension in Part$(U)$. Then by Corollary~\ref{cor 2.7}, one of its atoms
$\alpha,\beta$ and $\gamma$ is a join of two partitions $\delta_{1},\delta
_{2}\in\ $Part$(U)\setminus\{\triangle\}$ with $\delta_{1}\wedge\delta
_{2}=\triangle$. Observe that $\gamma=\delta_{1}\vee\delta_{2}$ is not
possible, since in that case either $G_{1}=\{a_{k-1},a_{k}\}$ or
$G_{2}=\{c_{k-1},c_{k}\}$ would be the single block of $\delta_{1}$ contained
in the cycle $\mathcal{C}$ through the points $a_{k-1},a_{k},b_{k}%
,c_{k},c_{k-1},b_{k-1},a_{k-1}$ in $\mathcal{H}$, which is excluded. Due
to the symmetry of $\mathcal{H}$, w.l.o.g. we may assume $\alpha=\delta_{1}%
\vee\delta_{2}$. However, according to the proof in case (a) of (i), now the
blocks $A^{\prime}:=\{a_{1},...,a_{k-1}\}$ and $E_{1}=\{b_{1},c_{1}%
\},...,E_{k-1}=\{b_{k-1},c_{k-1}\}$ must belong to the same partition $\pi
\in\{\delta_{1},\delta_{2}\}$, say $\delta_{1}$. As $\delta_{1}\neq\alpha$,
and because the two-element block $E_{k}=\{a_{k},b_{k}\}$ cannot be a proper
union of some blocks of $\delta_{1}$ and $\delta_{2}$, it must be the only
block of $\delta_{2}$. So now $E_{k}$ is the single block of $\delta_{2}$
contained in the cycle $\mathcal{C}$, which is a contradiction again. Thus
$\mathcal{B}$ is a maximal Boolean sublattice of Part$(U)$ with $8$
elements.
\end{proof}

\medskip

\noindent Let $A\subseteq U$ be a nonempty set. Then for any partition $\pi
\in\ $Part$(A)$ we denote by $\pi\cup\triangle_{U}$ its extension to Part$(U)$
obtained by adding all the sets $\{u\}$, $u\in U\setminus A$ to $\pi$ as
singleton blocks.

\medskip

\begin{theorem}\label{Thm. 4.11} Let $\mid U\mid=n$. 

\noindent (i) If $n\geq4$, then for
every integer $3\leq d\leq n-1$ there exists a maximal Boolean sublattice of
Part$(U)$ of size $2^{d}$.

\noindent(ii) If $n\geq3$ and $n$ is congruent to $0$ or $1$ mod $3$, then for every
integer $2\leq d\leq n-1$ there exists a maximal Boolean sublattice of
Part$(U)$ of size $2^{d}$.
\end{theorem}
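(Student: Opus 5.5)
The plan is a gluing construction. Take a subset $A\subseteq U$ with $|A|=m$ together with a maximal Boolean sublattice $\mathcal{B}_A$ of Part$(A)$ of dimension $d_0$ that contains $\triangle_A$ and $\triangledown_A$; such lattices are supplied by Example~\ref{ex. 4.1}, Example~\ref{ex. 4.9} and Lemma~\ref{lemma 4.10}. Fix a point $a\in A$ and a tree $T$ on the vertex set $(U\setminus A)\cup\{a\}$. Let $\mathcal{D}$ consist of the partitions $\alpha\cup\triangle_U$ for $\alpha\in A(\mathcal{B}_A)$, together with the atoms $\pi_{\{x,y\}}$ for the edges $\{x,y\}$ of $T$. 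Put $\mathcal{B}=\mathcal{D}_\vee\cup\{\triangle\}$; its dimension will be $d=d_0+(n-m)$.

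\textbf{Step 1: $\mathcal{B}$ is Boolean and contains $\triangle,\triangledown$.} The set $\mathcal{D}$ is clearly a disjoint system. In $\mathcal{H}(\mathcal{D})$ every edge of $T$ is a bridge, so every cycle lies inside $A$ and uses only blocks of atoms of $\mathcal{B}_A$. By Remark~\ref{rem. 3.6}, the circuit condition for $\mathcal{H}(\mathcal{D})$ therefore reduces to the circuit condition for $\mathcal{H}(A(\mathcal{B}_A))$. The latter holds by Theorem~\ref{Thm. 3.5}, because $\mathcal{B}_A$ is Boolean. Applying Theorem~\ref{Thm. 3.5} again, $\mathcal{B}$ is Boolean. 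Connectivity of $\mathcal{H}(\mathcal{D})$ gives $\triangledown\in\mathcal{B}$ by Remark~\ref{rem. 3.2}.

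\textbf{Step 2: maximality.} This is the main obstacle. By Corollary~\ref{cor 2.7}, a proper extension would yield an immediate extension $\mathcal{B}^\#$ in which some atom is split as $\delta_1\vee\delta_2$ with $\delta_1\wedge\delta_2=\triangle$. An edge atom $\pi_{\{x,y\}}$ cannot be split, since it is an atom of Part$(U)$. So the split atom is some $\alpha\cup\triangle_U$.

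Let $\pi_A$ denote the top $\triangledown_A\cup\triangle_U$ of the image of $\mathcal{B}_A$. Consider the atoms of $\mathcal{B}^\#$ lying below $\pi_A$; these are $\delta_1,\delta_2$ and the images of the other atoms of $\mathcal{B}_A$. As a subset of an independent set they are independent, so their joins together with $\triangle$ form a Boolean sublattice of the interval $[\triangle,\pi_A]$ (meets and joins of elements below $\pi_A$ stay there). This interval is isomorphic to Part$(A)$ via restriction. Transported to Part$(A)$, it becomes a Boolean sublattice strictly containing $\mathcal{B}_A$, which contradicts the maximality of $\mathcal{B}_A$. I would verify carefully that the independence and the identification of the interval with Part$(A)$ are compatible; that is the delicate point.

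\textbf{Step 3: bookkeeping of dimensions.} Write $d=n-m'+2$. The available building blocks are:
\begin{itemize}
\item $m'=m\equiv 0,1 \pmod 3$ with $d_0=2$, from Lemma~\ref{lemma 4.10}(i);
\item $m'=5$, using $m=5$ and $d_0=3$ from Example~\ref{ex. 4.9};
\item $m'=m-1$ with $m=3k$, $k\ge2$, and $d_0=3$, from Lemma~\ref{lemma 4.10}(ii); this covers $m'\equiv 2\pmod 3$ with $m'\ge 5$ and $m=m'+1\le n$.
\end{itemize}
Hence every $m'$ with $3\le m'\le n-1$ is realized. For $m'\equiv 2\pmod 3$, $m'\ge 8$, this uses $m=m'+1\le n$; for $m'=5$ it uses Example~\ref{ex. 4.9} with $m=5\le n$. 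This yields every $d$ with $3\le d\le n-1$, which proves (i). The case $d=n-1$ (tree case) is also covered directly by Proposition~\ref{prop. 1.4}.

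For (ii), the case $d=2$ is exactly Lemma~\ref{lemma 4.10}(i) with $A=U$. Combined with (i) for $n\ge4$, and with $n=3$ where only $d=2$ arises, this gives (ii).
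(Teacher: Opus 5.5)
Your proof follows the paper's route: glue a maximal Boolean sublattice of Part$(A)$ to atoms of Part$(U)$ along a tree hanging off a point $a\in A$ (the paper uses a path). Boolean-ness comes from the circuit condition via Theorem~\ref{Thm. 3.5} and Remark~\ref{rem. 3.6}, maximality from Corollary~\ref{cor 2.7} plus restriction to $A$, and the dimension count uses the same two cases of Lemma~\ref{lemma 4.10}.

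Your Step~2 is actually more careful than the paper. The paper only says that a decomposition $\alpha_i=\pi_1'\vee\pi_2'$ with $\pi_1'\wedge\pi_2'=\triangle_A$ ``could not exist.'' Such decompositions can exist for atoms of a maximal lattice; in Example~\ref{ex. 4.1}, $\pi_1=\pi_{\{a,b\}}\vee\pi_{\{c,d\}}$. So the contradiction needs exactly your observation: the split pieces, together with the remaining atoms, stay independent inside the interval $[\triangle,\pi_A]\cong$ Part$(A)$. Hence they generate a strictly larger Boolean sublattice of Part$(A)$.

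There is one slip in Step~3. In your parametrization $d=n-m'+2$ with $d=d_0+(n-m)$, you have $m'=m-d_0+2$. So Example~\ref{ex. 4.9} ($m=5$, $d_0=3$) gives $m'=4$, i.e.\ $d=n-2$, not $m'=5$. That case is already covered by Example~\ref{ex. 4.1}, so the example is redundant. The sentence routing $m'=5$ through Example~\ref{ex. 4.9} is therefore wrong. The case $m'=5$ (i.e.\ $d=n-3$) must come from your third bullet: Lemma~\ref{lemma 4.10}(ii) with $k=2$, $m=6$. This is available because $m'=5\le n-1$ forces $n\ge 6$, and it is exactly what the paper does. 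With the second bullet deleted, your bookkeeping coincides with the paper's two cases $p=n-d+2$ and $p=n-d+3$, and the proof is complete.
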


\begin{proof} Note that proving (i) is sufficient, because then
(ii) also follows by Lemma~\ref{lemma 4.10}(i) and Example~\ref{ex. 4.1}. In view of Proposition~\ref{prop. 1.4}, (i) is clear for $d=n-1$. Suppose that $d\leq n-2$, and choose a set
$A\subseteq U$ with $p$ elements, where $3\leq p\leq n$. 

First, we prove that
any maximal Boolean sublattice $\mathcal{B}_{0}$ of Part$(A)$ with dimension
$2\leq t\leq p-1$ can be extended to a maximal Boolean sublattice
$\mathcal{B}$ of Part$(U)$ with dimension $t+n-p$. As this is obvious for
$p=n$, we may suppose $p<n$. Then $U\setminus A$ has $n-p\geq1$ elements. Let
$\alpha_{1},...,\alpha_{t}$ (with $t\geq2$) be the atoms of $\mathcal{B}_{0}$.
We extend them to the partitions $\overline{\alpha}_{1}=\alpha_{1}%
\cup\triangle_{U},...,$ $\overline{\alpha}_{t}=\alpha_{t}\cup\triangle_{U}$ of
$U$. Now, by setting any $a\in A$, we define a path $\mathcal{P\colon}$
$v_{1},e_{1},v_{2},e_{2},...,v_{n-p},e_{n-p},v_{n-p+1}$, with $v_{n-p+1}:=a$,
covering all elements $v\in U\setminus A$ as nodes, where $e_{i}%
=\{v_{i},v_{i+1}\}$, $1\leq i\leq n-p$. Then each partition $\pi
_{\{v_{i},v_{i+1}\}}$ is an atom in Part$(U)$. Let us now consider the
disjoint system $\mathcal{D}=\{\overline{\alpha}_{1},...,\overline{\alpha}%
_{t},\pi_{\{v_{1},v_{2}\}},...,\pi_{\{v_{n-p},a\}}\}$ formed by $t+n-p$
disjoint partitions and define the hypergraph $\mathcal{H}\left(
\mathcal{D}\right)  $ whose edges are their blocks. We prove that the circuit
condition holds in $\mathcal{H}\left(  \mathcal{D}\right)  $. In view of
Remark~\ref{rem. 3.6}, to check this, it is enough to consider two paths $\mathcal{P}%
_{1}$ and $\mathcal{P}_{2}$ with common endpoints such that they form a cycle
$\mathcal{C}$ in $\mathcal{H}\left(  \mathcal{D}\right)  $. Then, by our
construction, all their points should be contained in the set $A$ and their
edges can only be some edges of the hypergraph $\mathcal{H}\left(
A(\mathcal{B}_{0})\right)  $. 

\noindent As $\mathcal{B}_{0}$ is a Boolean lattice,
the circuit condition holds for any cycle in $\mathcal{H}\left(  A(\mathcal{B}%
_{0})\right)  $. Since $\mathcal{H}\left(  A(\mathcal{B}_{0})\right)  $ is the
restriction of the hypergraph $\mathcal{H}\left(  A(\mathcal{D})\right)  $ to
the set $A$, the circuit condition is valid also for the union $\mathcal{C}$
of $\mathcal{P}_{1}$ and $\mathcal{P}_{2}$ in $\mathcal{H}\left(
A(\mathcal{D})\right)  $. Then, by Theorem~\ref{Thm. 3.5}, $\mathcal{D}$ is an
independent set, and hence, $\mathcal{D}_{\vee}\cup\{0\}$ is a Boolean lattice
$\mathcal{B}$ whose atoms are elements of $\mathcal{D}$, according to
Lemma~\ref{lemma 2.5}. Therefore, the dimension of $\mathcal{B}$ is $t+n-p$. We claim that
$\mathcal{B}$ has no proper Boolean extension. Since those elements of
$\mathcal{D}$ which are not atoms in Part$(U)$ are $\overline{\alpha}%
_{1},...,\overline{\alpha}_{t}$, by Corollary~\ref{cor 2.7} any immediate Boolean
extension of $\mathcal{B}$ can be obtained only by splitting one of them, say
$\overline{\alpha}_{i}$ into two new disjoint partitions, i.e. such that
$\overline{\alpha}_{i}=\pi_{1}\vee\pi_{2}$ where $\pi_{1},\pi_{2}\in
\ $Part$(U)$, and $\pi_{1}\wedge\pi_{2}=\triangle_{U}$. As $\pi_{1},\pi
_{2}\leq\overline{\alpha}_{i}$, and all the blocks of $\overline{\alpha}_{i}$
not included in $A$ are singletons, the same holds for the blocks of $\pi_{1}$
and $\pi_{2}$. Thus they have the form $\pi_{1}=\pi_{1}^{\prime}\cup
\triangle_{U}$, $\pi_{2}=\pi_{2}^{\prime}\cup\triangle_{U}$, where $\pi
_{1}^{\prime}$ and $\pi_{2}^{\prime}$ are their restrictions to the set $A$.
Then $\pi_{1}^{\prime}\wedge\pi_{2}^{\prime}=\triangle_{A}$, and $\alpha
_{i}=\pi_{1}^{\prime}\vee\pi_{2}^{\prime}$. As $\mathcal{B}_{0}\mathcal{\ }$is
a maximal Boolean sublattice in Part$(A)$, such a decomposition of an
$\alpha_{i} \in A(\mathcal{B}_{0})$ $(1\leq i\leq t)$ could not exist. Thus there is no proper
Boolean extension of $\mathcal{B}$.

If $n-d+2$ is congruent to $0$ or $1$ mod $3$, then choose a set $A\subseteq
U$ with $p:=n-d+2$ elements. Then $4\leq p\leq n$, and $U\setminus A$ has
$n-p=d-2$ elements. Now, in view of Lemma~\ref{lemma 4.10}(i), Part$(A)$ contains a
maximal Boolean sublattice of dimension $t=2$, and this can be extended to
a maximal Boolean sublattice $\mathcal{B}$ of Part$(U)$ whose dimension is
$t+n-p=2+d-2=d$, where $d\geq t=2$.

If $n-d+2=3l+2$, where $l\geq1$, then take a subset $A$ with $p:=n-d+3=3(l+1)$
elements. Then $6\leq p\leq n$, and $U\setminus A$ has $n-p=d-3$ elements.
Now, in view of Lemma~\ref{lemma 4.10}(ii), Part$(A)$ contains a maximal Boolean
sublattice of dimension $t=3$, and this can be extended to a maximal Boolean
sublattice $\mathcal{B}$ of Part$(U)$ of dimension $t+n-p=3+d-3=d$, where
$d\geq t=3$.

Thus, in both of the above cases, Part$(U)$ contains a maximal Boolean sublattice
of size $2^{d}$, where $3\leq d\leq n-1$. So (i) is true, and this completes
the proof. 
\end{proof}

\medskip

\begin{remark}\label{rem. 4.12} We do not know wether statement (ii) of the above
theorem can be strengthened to include the case when $d=2$ and $n$ is
congruent to $2$ mod $3$, with the exception of some $n$. The particular (and
possibly even unique) exception $n=5$ is unavoidable because in the partition
lattice on a $5$-element set every $4$-element Boolean sublattice is contained
in an $8$-element Boolean sublattice. Characterizing maximal $4$-element
Boolean sublattices in general would amount a better understanding of the
various types of complements in a partition lattice.
\end{remark}

\medskip
\printbibliography

\end{document}